
\documentclass[11pt]{article}
\usepackage{verbatim,a4wide} 
\usepackage{graphicx}
\usepackage[utf8]{inputenc}
\usepackage{amsmath,amsfonts,amssymb,amscd,amsthm,txfonts,hyperref}
\usepackage{xcolor}
\oddsidemargin 1 cm 
\evensidemargin 1 cm 
\textwidth 15 cm 
\topmargin 0 cm 
\textheight 22.5 cm

\newtheorem{theorem}{Theorem}[section]
\newtheorem{proposition}[theorem]{Proposition}
\newtheorem{lemma}[theorem]{Lemma}
\newtheorem{corollary}[theorem]{Corollary}
\newtheorem{remark}{Remark}[section]

\theoremstyle{definition}
\newtheorem{definition}[theorem]{Definition}

\newcommand{\R}{\mathbb{R}}

\newcommand{\N}{\mathbb{N}}
\newcommand{\Z}{\mathbb{Z}}
\newcommand{\T}{\mathbb{T}}

\begin{document}

\title{Continuity of the flow of KdV with regard to the Wasserstein metrics and application to an invariant measure}
\author{Federico Cacciafesta\footnote{SAPIENZA - Universit\'a di Roma, Dipartimento di Matematica, Piazzale A. Moro 2, I-00185 Roma, Italy}\; and Anne-Sophie de Suzzoni\footnote{Universit\'e de Cergy-Pontoise,  Cergy-Pontoise, F-95000,UMR 8088 du CNRS }}

\maketitle

\begin{abstract} In this paper, we prove the continuity of the flow of KdV on spaces of probability measures with respect to a combination of Wasserstein distances on $H^s$, $s>0$ and $L^2$. We are motivated by the existence of an invariant measure belonging to the spaces onto which these distances are defined. \end{abstract}

\tableofcontents

\section{Introduction}

The Korteweg de Vries (KdV) equation 
\begin{equation}\label{kdv}
\partial_t u + \partial_x^3 u + \frac{1}{2} \partial_x u^2 = 0
\end{equation}
is an approximation of the water waves in the case of large wavelengths and small initial data. 
There is an extensive literature on the Cauchy problem associated to KdV, from what we shall refer as a deterministic point of view, see for instance \cite{KTglo,bourgain,KPVbil,CKSTTsha,kato}, would it be on the torus, the Euclidean space or other manifolds. To consider KdV from a probabilistic point of view, we mention \cite{Binvkdv,Ohinv}.

In this paper we are interested in the continuity of the flow of this equation on the torus $\mathbb{T}$ with regard to metrics on functional probability spaces. However, we do not study here a problem of low-regularity of the initial data. Indeed, our initial datum is a measure $\mu$ on the topological $\sigma$-algebra of the Sobolev space $H^s$ for some non negative $s$, where KdV is known to be well-posed. We define the action of the flow of KdV on the measures as $\mu \mapsto \mu^t$ where $\mu^t$ is the pushforward measure of $\mu$ under the flow of KdV, $\Psi(t)$, that is, the measure defined as
$$
\mu^t (A) = \mu(\Psi(t)^{-1}A)
$$
on every measurable set $A$. This measure is well-defined as soon as $\Psi(t)$ is defined and continuous on $H^s$ (in this case, $\Psi(t)$ is $\mu$-measurable). 

We inquire about the continuity of the action of the flow with regard to the Wasserstein metrics (we refer to \cite{KSmat,Drea} for further informations on these metrics). In other terms, we compare the distance between $\mu^t$ and $\nu^t$ with the one between $\mu$ and $\nu$. The Wasserstein distance is defined in our case as
$$
W_{s,p}(\mu, \nu) = \inf_{\gamma \in \textrm{Marg}(\mu,\nu)} \left( \int \|x-y\|_{H^s}^p d\gamma(x,y) \right)^{1/p}
$$
where $\textrm{Marg}(\mu,\nu)$ is the set of measures on the topological $\sigma$-algebra of the Cartesian product $H^s\times H^s$ whose marginals are $\mu$ and $\nu$ . We assume that $p$ is more than $1$ and $s$ non negative, with the usual extension for $p = \infty$. This distance is used in transportation theory (see \cite{BKthMK} for instance) to represent the optimal cost to move a repartition $\mu$ of goods to a repartition $\nu$ when the price to transport one item from $x$ in the support of $\mu$ to $y$ in the support of $\nu$ is $\|x-y\|_{H^s}^p$. 

To be more precise, what we will use as a distance on probability measures is
$$
\|\mu-\nu\|_{s,p} = W_{0,\infty}(\mu,\nu) + W_{s,p}(\mu,\nu)
$$
with $s>0$ and $p<\infty$. Remark that this distance is defined only if $\|x\|_{H^s}^p$ is $\mu$ and $\nu$ integrable and if $\|x\|_{L^2}$ is in $L^\infty_\mu$ and $L^\infty_\nu$. We call $M_{s,p}$ the set of measures satisfying these properties. We choose these metrics because they correspond to weak convergence of the measures (like total variation distance) and convergence of the moments of orders $q$ and $r$
$$
\|x\|_{L^q_\mu,H^s}^q \; , \; \|x\|_{L^r_\mu,L^2}^r
$$
for all $q\leq p$ and all $r$, which gives some more information on the law of $\mu^t$. Where this continuity comes from may be more understandable if we consider random variables instead of measures. In terms of random variables, there is an analogy between the metric space $M_{s,p}$ and, given a probabilistic space $(\Omega, \mathcal A,\mathbb P)$, the space $L^\infty(\Omega,L^2(\T))\cap L^p(\Omega, H^s(\T))$. With two random variables $x$ and $y$ on $\Omega$ with values in $H^s$ almost surely, the continuity in $L^\infty(\Omega,L^2(\T))\cap L^p(\Omega, H^s(\T))$ requires on the one hand to bound $\Psi(t)x - \Psi(t)y$ in $L^\infty(\Omega,L^2(\T))$ in function of the norm of $x - y$. This bound comes from the conservation of the $L^2$-norm by the flow of KdV. On the other hand, we need to bound $\Psi(t)x - \Psi(t)y$ in $L^p(\Omega, H^s)$. The $L^p$ norm in probability is an obstacle. To get rid of this difficulty, we bound $\Psi(t)x_\omega - \Psi(t)y_\omega$ with $\omega$ the probability variable in $H^s$ by 
$$
C_1(x_\omega,y_\omega,t)\|x_\omega-y_\omega\|_{H^s} + C_2(x_\omega,y_\omega,t)(\|x_\omega\|_{H^s}+\|y_\omega\|_{H^s})\|x_\omega-y_\omega\|_{L^2} 
$$
with $C_1$ and $C_2$ depending on time and the $L^2$ norms of $x_\omega$ and $y_\omega$ such that we can take its $L^p$ norm in probability without losing any integrability. The strategy with Wasserstein metrics is the same.

Another reason why we choose Wasserstein metrics is that there exists an invariant measure under the flow of KdV in the intersection over $s<1/2$ and $p<\infty$ of the $M_{s,p}$. The stability of invariant measures in $M_{s,p}$, as we will define it, is a direct consequence of the continuity of the action of the flow on measures with regard to $\|\; .\;\|_{s,p}$. It means that at finite times, the distance between $\nu^t$ and its initial value is controlled by the distance between $\nu$ and the invariant measure.

Invariant measures are studied for many reasons. One of them is that they sometimes produce supercritical global well-posedness (see for instance \cite{BTranII}). Another one is that the invariance can be seen as an equilibrium of the system, weaker than thermodynamical equilibrium but stronger than statistical equilibrium (invariance of the mean values of the amplitudes of the Fourier coefficients of the solution, see \cite{zfwea}). They are usually built in the following way. We consider an invariant for a Hamiltonian equation (mass, energy ...) of the form 
$$
\|u\|_{H^s}^2 + R(u)
$$
where $u$ is the solution of the equation and $R(u)$ involves derivatives of $u$ weaker than $s$. The invariant measure resembles
$$
e^{-R(u)} e^{-\|u\|_{H^s}^2}du
$$
where
$$
d\mu(u) = e^{-\|u\|_{H^s}^2}du
$$
is a Gaussian measure with support in $H^{s-1/2-}$ in dimension 1. 

We exploit here three invariants of KdV. Actually, we consider the measure built and proved to be invariant by Bourgain in the appendix of \cite{Binvkdv}. We present the proof of the invariance in a wish of completeness rather than to claim any novelty about it. However, we focus on proving the invariance on the whole topological $\sigma$-algebra of $H^{1/2-} = \cap H^s$, $s<1/2$ instead of $H^{1/2-\varepsilon}$. 

The first invariant is the mean value along $x$
$$
\int_{\T} u(x,t) dx
$$
in order to assume that the solution has $0$ mean value along $x$ and build the Gaussian measure as the one induced by the random variable
$$
\sum_{n\neq 0} \frac{g_n}{|n|}e^{inx}
$$
where $g_n$ are Gaussian variables. This measure corresponds to
$$
e^{-\|\partial_ x u\|_{L^2}^2/2}du\; .
$$
Under the assumption of $0$ mean value, the $L^2$ norm of $\partial_x u$ corresponds to the $H^1$ norm of $u$, and the support of this measure is included in $H^{1/2-}$. 

The second one is the Hamiltonian
$$
\frac{1}{2}\|\partial _x \|_{L^2}^2 - \frac{1}{6}\int_{\T} u^3\; .
$$
Due to the absence of sign of $\int u^3$, it is unclear whether $e^{\int u^3}$ is $\mu$ integrable or not, which prevents us from using the measure $e^{\frac{1}{6} \int u^3} e^{-\frac{1}{2} \|\partial_x u \|_{L^2}^2}du$, given that we are looking for a measure in $M_{s,p}$. Hence, we use a third invariant, which is the invariance of the mass to write 
$$
d\rho(u) = 1_{\|u\|_{L^2}\leq 1} e^{-\frac{1}{2}\|\partial _x \|_{L^2}^2 + \frac{1}{6}\int_{\T} u^3}  du \; .
$$
This makes $\rho$ an invariant measure belonging to $M_{s,p}$ up to a renormalization factor. The invariance of $\rho$ comes from the preservation of the mentioned quantities and the invariance of Lebesgue measure under Hamiltonian flows. 

We prove the following result.

\begin{theorem}\label{th-mainresult} The action of the flow of KdV is continuous on $M_{s,p}$ according to the estimate, valid for all times $t\in \R$ and all measures $\mu,\nu \in M_{s,p}$,
$$
\|\mu^t - \nu^t \|_{s,p} \leq C (1+\|x\|_{L^p_\mu, H^s(\T)}+\|x\|_{L^p_\nu, H^s(\T)})e^{c|t|(\|x\|_{L^\infty_\mu, L^2(\T)}+\|x\|_{L^\infty_\nu, L^2(\T)})^{12}}\|\nu - \mu\|_{s,p} \; .
$$
\end{theorem}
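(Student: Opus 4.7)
The plan is to reduce the Wasserstein continuity statement to pointwise stability estimates on the KdV flow, integrated against well-chosen couplings. The backbone is two deterministic bilinear bounds:
\[
\|\Psi(t)x - \Psi(t)y\|_{L^2} \leq \Gamma_0(t,x,y)\,\|x-y\|_{L^2},
\]
\[
\|\Psi(t)x - \Psi(t)y\|_{H^s} \leq \Gamma(t,x,y)\,\bigl[\|x-y\|_{H^s} + (\|x\|_{H^s}+\|y\|_{H^s})\|x-y\|_{L^2}\bigr],
\]
where $\Gamma_0$ and $\Gamma$ have the form $\exp\bigl(c|t|(\|x\|_{L^2}+\|y\|_{L^2})^{12}\bigr)$. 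To prove these I set $u = \Psi(t)x$, $v = \Psi(t)y$, $w = u-v$, and observe that $w$ solves the linearised equation
\[
\partial_t w + \partial_x^3 w + \tfrac{1}{2}\partial_x((u+v)w) = 0.
\]
An $L^2$ energy estimate on $w$ combined with Gronwall immediately produces the first inequality. For the second, I apply $\langle D\rangle^s$ to the equation, test against $\langle D\rangle^s w$ and treat the commutator with Kato--Ponce type estimates; the most dangerous contribution, in which all $s$ derivatives fall on $u+v$, is precisely what produces the extra factor $(\|x\|_{H^s}+\|y\|_{H^s})\|x-y\|_{L^2}$. Gagliardo--Nirenberg interpolation then lets me trade any remaining $H^s$ dependence in the time-integrated coefficients for powers of the conserved $L^2$ mass of $u$ and $v$, and the exponent $12$ comes out of the specific Gagliardo--Nirenberg exponents needed to control the nonlinear contributions purely by $L^2$ norms through the successive Gronwall arguments.

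\medskip

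Once these pointwise bounds are in hand, I transport them to the Wasserstein setting by the standard pushforward trick: for any coupling $\gamma \in \mathrm{Marg}(\mu,\nu)$, the measure $(\Psi(t),\Psi(t))_*\gamma$ is a coupling of $(\mu^t,\nu^t)$, so
\[
W_{s,p}(\mu^t,\nu^t)^p \leq \int \|\Psi(t)x - \Psi(t)y\|_{H^s}^p \, d\gamma(x,y),
\]
and analogously for $W_{0,\infty}$. Membership of $\mu,\nu$ in $M_{s,p}$ gives $\|x\|_{L^2}, \|y\|_{L^2} \leq M := \|x\|_{L^\infty_\mu, L^2} + \|x\|_{L^\infty_\nu, L^2}$ for $\gamma$-a.e.\ $(x,y)$, so the random prefactors $\Gamma_0$ and $\Gamma$ are dominated by the deterministic constant $e^{c|t|M^{12}}$ and pulled outside the integral. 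For the $W_{0,\infty}$ part, taking $\gamma$ to be $L^\infty$-optimal and applying the pointwise $L^2$ bound directly yields $W_{0,\infty}(\mu^t,\nu^t) \leq e^{c|t|M^{12}} W_{0,\infty}(\mu,\nu)$. For the $W_{s,p}$ part, taking a near-optimal coupling for $W_{s,p}(\mu,\nu)$, the first term in the bilinear estimate integrates to $W_{s,p}(\mu,\nu)^p$, while the second term
\[
\int (\|x\|_{H^s}+\|y\|_{H^s})^p \|x-y\|_{L^2}^p \, d\gamma
\]
is handled by H\"older, separating the $p$-th moment of $H^s$ norms (bounded by $\|x\|_{L^p_\mu,H^s}^p + \|x\|_{L^p_\nu,H^s}^p$) from $\|x-y\|_{L^2}$, which can be compared to $W_{0,\infty}(\mu,\nu)$ via a gluing-lemma construction joining this coupling to an $L^\infty$-optimal one.

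\medskip

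The main obstacle is the deterministic bilinear $H^s$ bound, and specifically its mixed structure: the ``large'' factor $\|x\|_{H^s}+\|y\|_{H^s}$ must be paired with the $L^2$-difference $\|x-y\|_{L^2}$ rather than the $H^s$-difference. This is what makes the second integral above close against a distance in $M_{s,p}$: the $L^p_\mu, L^p_\nu$ moments of $H^s$ norms (all that $M_{s,p}$-membership provides) meet the $W_{0,\infty}$ distance between $\mu$ and $\nu$ on the $L^2$ side. The technical price is a careful commutator analysis of the difference equation combined with Gagliardo--Nirenberg interpolations, keeping the exponential factor controlled by a power of the conserved $L^2$ mass; tracking these exponents carefully yields the precise $M^{12}$ in the final statement.
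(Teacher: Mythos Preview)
Your Wasserstein-level architecture is exactly the paper's: push a coupling $\gamma$ forward by $(\Psi(t),\Psi(t))$, use the $L^\infty_\mu,L^2$ and $L^\infty_\nu,L^2$ bounds to make the exponential prefactor deterministic, and then integrate the pointwise stability estimates. The mixed structure you single out---pairing $\|x\|_{H^s}+\|y\|_{H^s}$ with $\|x-y\|_{L^2}$---is also exactly what the paper proves and uses.

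The gap is in the deterministic step. An energy/Gronwall argument on the difference equation does not close at the regularity considered here ($s\geq 0$, and in particular $s<1/2$ for the application). Testing $w=u-v$ against itself in $L^2$ gives, after the obvious integration by parts,
\[
\frac{d}{dt}\|w\|_{L^2}^2 \;=\; -\tfrac{1}{2}\int_{\T} \partial_x(u+v)\,w^2\,dx,
\]
which you can only Gronwall if you control $\partial_x(u+v)$ in $L^\infty_x$ (or at least $L^1_t L^\infty_x$). That requires essentially $u,v\in H^{3/2+}$; nothing of the sort is available from the $L^2$ conservation law, and no Gagliardo--Nirenberg interpolation will manufacture a derivative out of an $L^2$ bound. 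The $H^s$ version via Kato--Ponce has the same problem: every commutator term still carries one spatial derivative that has to land somewhere.

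What the paper actually does is use Bourgain spaces. The bilinear estimate
\[
\|\eta_T\,\partial_x(uv)\|_{Z^s}\;\lesssim\; T^{1/12}\bigl(\|u\|_{Y^s}\|v\|_{Y^0}+\|u\|_{Y^0}\|v\|_{Y^s}\bigr)
\]
(derived from Bourgain's periodic $L^4$ Strichartz inequality for the Airy propagator) gives local well-posedness in $Y^s$ on a time interval of length $T\sim(\|u_0\|_{L^2}+\|v_0\|_{L^2})^{-12}$, with the local bounds
\[
\|\Psi(T)u_0-\Psi(T)v_0\|_{L^2}\leq C\|u_0-v_0\|_{L^2},
\]
\[
\|\Psi(T)u_0-\Psi(T)v_0\|_{H^s}\leq C\|u_0-v_0\|_{H^s}+CT^{1/12}(\|u_0\|_{H^s}+\|v_0\|_{H^s})\|u_0-v_0\|_{L^2}.
\]
The exponent $12$ is thus the reciprocal of the $T^{1/12}$ gain in the bilinear estimate, not a Gagliardo--Nirenberg artifact. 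The global pointwise bounds you wrote down are then obtained by iterating these local estimates $n\sim |t|(\|u_0\|_{L^2}+\|v_0\|_{L^2})^{12}$ times, using that the $L^2$ norm (hence $T$) is conserved along the flow. So your two displayed inequalities are correct as statements, but their proof needs the dispersive machinery, not energy estimates.

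One smaller remark: your gluing-lemma concern (needing one coupling simultaneously near-optimal for $W_{0,\infty}$ and for $W_{s,p}$) is legitimate; the paper handles this somewhat implicitly by taking the infimum over $\gamma$ after bounding the $H^s$ integral by $\|u_0-v_0\|_{L^\infty_\gamma,L^2}^p R_2^p + \int\|u_0-v_0\|_{H^s}^p\,d\gamma$.
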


Moreover, since the invariant measure $\rho$ belongs to $M_{s,p}$, we get the following corollary.

\begin{corollary}\label{cor-mainapp}The measure $\rho$ is locally stable in time, in the sense that for all times $t \in \R$ and for all measures $\nu \in M_{s,p}$, we have 
$$
\|\nu^t - \nu \|_{s,p} \leq Ce^{c|t|(1+\|x\|_{L^\infty_\nu,L^2})^{12}}(1+\|x\|_{L^p_\nu,H^s})\|\nu - \rho\|_{s,p} \; .
$$
\end{corollary}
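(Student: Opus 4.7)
The plan is to apply Theorem~\ref{th-mainresult} with $\mu = \rho$ and exploit the invariance of $\rho$ under the flow $\Psi(t)$. Since $\rho^t = \rho$ for all $t \in \R$, the triangle inequality for the distance $\|\cdot\|_{s,p}$ gives
$$
\|\nu^t - \nu\|_{s,p} \leq \|\nu^t - \rho^t\|_{s,p} + \|\rho - \nu\|_{s,p},
$$
so the problem reduces to bounding $\|\nu^t - \rho^t\|_{s,p}$ by the right-hand side stated in the corollary.

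Applying Theorem~\ref{th-mainresult} directly yields
$$
\|\nu^t - \rho^t\|_{s,p} \leq C\bigl(1 + \|x\|_{L^p_\rho, H^s} + \|x\|_{L^p_\nu, H^s}\bigr) e^{c|t|(\|x\|_{L^\infty_\rho, L^2} + \|x\|_{L^\infty_\nu, L^2})^{12}} \|\nu - \rho\|_{s,p}.
$$
To match the form of the target estimate, I would now absorb the $\rho$-dependent moments into absolute constants. By construction, $\rho$ is supported on $\{u : \|u\|_{L^2} \leq 1\}$, so $\|x\|_{L^\infty_\rho, L^2} \leq 1$, which allows the exponent to be replaced by $c'|t|(1 + \|x\|_{L^\infty_\nu, L^2})^{12}$ after enlarging $c$. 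Similarly, since $\rho \in M_{s,p}$, the moment $\|x\|_{L^p_\rho, H^s}$ is a finite constant depending only on $s$ and $p$, and can be swallowed into the prefactor $C(1 + \|x\|_{L^p_\nu, H^s})$.

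Finally, the leftover term $\|\rho - \nu\|_{s,p}$ coming from the triangle inequality is trivially controlled by the target right-hand side, since $(1+\|x\|_{L^p_\nu,H^s}) \geq 1$ and the exponential factor is at least $1$. There is no real obstacle in this proof beyond the statement of Theorem~\ref{th-mainresult}: the only ingredients are the invariance $\rho^t = \rho$ (established via Bourgain's construction) and the membership $\rho \in M_{s,p}$ together with the $L^2$-cutoff built into $\rho$, both of which have been justified in the introduction.
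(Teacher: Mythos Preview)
Your proof is correct and follows essentially the same approach as the paper: triangle inequality via the invariance $\rho^t=\rho$, then Theorem~\ref{th-mainresult} applied to the pair $(\nu,\rho)$, with the $\rho$-moments absorbed into the constants using $\|x\|_{L^\infty_\rho,L^2}\leq 1$ and $\rho\in M_{s,p}$. The paper's own argument is in fact terser than yours, stating the bound on $\|\nu^t-\rho^t\|_{s,p}$ directly without spelling out the absorption step.
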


The paper is organized as follows.

In Section 2, we present different metrics on probability measures and discuss their relevance to study the flows of non linear PDEs.

In Section 3, we prove local well-posedness for KdV in $H^s$ with $s\geq 0$ with a time of existence and uniqueness depending only on the $L^2$ norm of the initial datum. We deduce from it global well-posedness and useful estimates.

In Section 4, we prove the continuity of the action of the flow of KdV on the metric space of measures $M_{s,p}$.

In Section 5, we build and prove the invariance of the measure $\rho$ under the flow of KdV.

\paragraph{Acknowledgements} The first author is supported by the FIRB 2012 "Dispersive dynamics, Fourier analysis and variational methods".

The authors would like to thank Armen Shirikyan for helpful suggestions.

\section{Different metrics on probability measures}

In this section, we discuss different kinds of distances on probability measures which are commonly used, in transportation theory, for instance, see \cite{BKthMK} and references therein for a survey, and their relevance regarding the study of the continuity of the flow of Hamiltonian non linear equations. For further information on these distances, we also refer to \cite{KSmat,Drea}. We want to deduce the stability of invariant measures for these equations from this continuity. First, we introduce the total variation distance. The continuity of the flow for this distance is obtained with only few assumptions on the flow, but it only corresponds to weak convergence and we wanted to add the convergence of the moments of order $p$ too. Then, we introduce dual Lipschitz and Kantorovitch distances. We will explain in which sense these distances are not fit to study the continuity of non linear PDEs. Finally, we present the Wasserstein distances. These distances are defined only on certain measures, but the invariant measure we consider in Section \ref{sec-cons} is one of these. Besides, we prove in Section \ref{sec-contflow} that the flow of KdV is continuous with regard to these distances.

\subsection{Total variation, dual Lipschitz and Kantorovitch distances}

We assume here that $X$ is a Polish space (completely metrizable, separable, topological space) of functions with metrics $d_X$ and that $\Psi(t)$ is the flow of some equation such that $\Psi(t)$ is Lipschitz continuous on $X$ with some constant $C(t)$, that is
$$
Lip(\Psi(t)) = \sup_{x_1,x_2\in X} \frac{d_X(\Psi(t)x_1,\Psi(t)x_2)}{d_X(x_1,x_2)} = C(t)\; .
$$

For any $f : X\rightarrow \R$, we denote, if $f$ is bounded,
$$
\|f\|_\infty = \sup_{x\in X} |f(x)| \; ,
$$
if $f$ is Lipschitz continuous,
$$
Lip(f) = \sup_{x_1,x_2\in X} \frac{|f(x_1)-f(x_2)|}{d_X(x_1,x_2)}\; ,
$$
and if $f$ is both
$$
\|f\|_L = \|f\|_\infty+ Lip(f) \; .
$$

Let us define usual metrics on probability measures on the topological $\sigma$-algebra of $X$.

In the following, we let $\mu$, $\nu$ be two measures on $X$.

\begin{definition}[Total variation distance] We call the total variation distance and we write $\|\mu - \nu\|_{\textrm{var}}$, the distances
$$
\|\mu-\nu\|_{\textrm{var}} = \frac{1}{2} \sup_{\|f\|_\infty \leq 1} \Big|\langle f\; ,\; \mu\rangle - \langle f\; ,\; \nu\rangle \Big|\; ,
$$
where $\langle \; ,\; \rangle $ denotes $\int f d\mu$.
\end{definition}

We then call $\mu^t$ the measure on $X$ defined as the pushforward measure of $\mu$ through the flow $\Psi(t)$ that is  
$$
\mu^t(A) = \mu( \Psi(t)^{-1}A)
$$
for any measurable set $A$, which is defined as long as $\Psi(t)$ is a measurable function from $X$ to $X$.

Then, it appears that it suffices for $\Psi(t)$ to be measurable to get 
$$
\|\mu^t - \nu^t\|_{\textrm{var}} \leq \|\mu - \nu\|_{\textrm{var}}
$$
since, if $\|f\|_\infty \leq 1$ then $\|f\circ \Psi(t) \|_\infty \leq 1$, and by definition, we have that
$$
\langle f ,\mu^t\rangle = \langle f\circ \Psi(t) ,\mu\rangle \; .
$$

\begin{definition}[Dual Lipschitz distance]We call the dual Lipschitz distance and we write $\|\mu - \nu\|_L^*$, the distances
$$
\|\mu-\nu\|_L^* = \sup_{\|f\|_L \leq 1} \Big|\langle f\; ,\; \mu\rangle - \langle f\; ,\; \nu\rangle \Big|\; .
$$
\end{definition}

We then get that 
$$
\|\mu^t-\nu^t\|_L^* \leq (1+C(t)) \|\mu-\nu\|_L^* \; ,
$$
since, if $\|f\|_L \leq 1$, then $\|f\circ \Psi(t)\|_\infty \leq \|f\|_\infty \leq 1$ and $Lip(f\circ \Psi(t)) \leq Lip(f) Lip(\Psi(t)) \leq C(t)$, which implies that
$$
\|f\circ \Psi(t)\| \leq 1+C(t)\; .
$$
Hence, $\frac{f\circ \Psi(t)}{1+C(t)}$ is Lipschitz continuous with constant less than $1$, and we can conclude.

\begin{definition} Let $\mathcal M_1$ be the space of measures $\mu$ on $X$ such that for all $x_0$, $d_X(x,x_0)$ is $\mu$ integrable. We call the Kantorovitch distance and we write $\|\mu - \nu\|_K$, the distances on $\mathcal M_1$ 
$$
\|\mu-\nu\|_{K} = \sup_{Lip(f) \leq 1} \Big|\langle f\; ,\; \mu\rangle - \langle f\; ,\; \nu\rangle \Big|\; ,
$$
\end{definition}

From previous remarks, we see that
$$
\|\mu^t - \nu^t\|_K\leq C(t) \|\mu-\nu\| \; .
$$

Then, it also happens that if $\mu$ is an invariant measure through the flow $\Psi(t)$ then as 
$$
\langle f, \mu^t\rangle  = \langle f ,\mu\rangle 
$$
for all $f$ bounded or for all $f$ Lipschitz continuous if $\mu$ is in $\mathcal M_1$, we get that
$$
\|\mu^t-\mu\|_K = \|\mu^t-\mu\|_L^*=\|\mu^t-\mu\|_{\textrm{var}} = 0\; .
$$

The problem with these distances and their different previously mentioned properties with regard to the flow of a non linear PDE such as KdV is that this flow has no reason to be Lipschitz continuous with a constant depending only on time and not on the size of the initial datum. However, as it will be proved, one can build an invariant measure $\mu$ for the flow of KdV on $H^{1/2-}$ such that $\|x\|_{L^2}$ is in $L^\infty_\mu$ and $\|x\|_{H^s}$ is in $L^p_\mu$ for all $s<\frac{1}{2}$ and all $p<\infty$.

\subsection{Wasserstein metrics}

The Wasserstein distance is defined only on certain measures. The space onto which the version of the Wasserstein metrics we use is defined is given in the next definition.

\begin{definition}[Spaces $M_{s,p}$] For $s\geq 0$ and $p\in [1,\infty]$, let $M_{s,p}$ be the space of measures $\mu$ on the topological $\sigma$-algebra of $H^s(\mathbb T)$ where $\mathbb T$ is the torus of dimension 1 such that $\|u\|_{L^2}$ belongs to $L^\infty_\mu$ and $\|u\|_{H^s}$ belongs to $L^p_\mu$. \end{definition}

\begin{remark} Measures $\mu$ on $H^s$ that satisfy large Gaussian deviation estimates (for instance Gibbs measures, and several invariant measures for various flows of Hamiltonian PDEs, see \cite{TVinvBO,NORSinv,BTTlong,dSinv}) are such that $\|u\|_{H^s}$ belong to $L^p_\mu$ for any $p<\infty$. \end{remark}

Let us now introduce the Wasserstein metrics.

\begin{definition}[Wasserstein metrics] For all $\mu,\nu \in M_{s,p}$, we call the $s,p$-Wasserstein distance and we denote $W_{s,p}$, the distance
$$
W_{s,p}(\mu,\nu) = \left( \inf \lbrace \int \|x-y\|_{H^s}^p d\gamma(x,y)\; \Big|\; \gamma \in \textrm{Marg}(\mu,\nu)\rbrace\right)^{1/p} \; ,
$$
where $\textrm{Marg}(\mu,\nu)$ is the set of measures on the topological $\sigma$-algebra on $H^s\times H^s$ whose marginals are $\mu$ and $\nu$.

For $p=\infty$, we define
$$
W_{s,\infty} = \inf \lbrace \|\;\|x-y\|_{H^s}\|_{L^\infty_\gamma} \; \Big|\; \gamma \in \textrm{Marg}(\mu,\nu) \rbrace \; .
$$
\end{definition}

\begin{remark} The distance $W_{s,p}$ corresponds to weak convergence of the measure plus the convergence of the moments of order $q\leq p$ defined as 
$$
\int \|x\|_{H^s}^pd\mu(x) \; .
$$

Besides, the Rubinstein-Kantorovitch theorem provides an equivalence for the distance $W_{s,1}$. \end{remark}

\begin{theorem}[Rubinstein-Kantorovitch]The distance $W$ defined as
$$
W(\mu,\nu) = \inf_{\gamma \in \textrm{Marg}(\mu,\nu)} \int d_X(x,y) d\gamma(x,y)
$$
is equivalent to the Kantorovitch distance. Besides, the infimum is reached if $\mu$ and $\nu$ are tight.\end{theorem}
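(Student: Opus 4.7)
The plan is to prove equivalence in two directions and then discuss attainment separately. I will assume throughout that $\mu,\nu\in\mathcal M_1$ so that both $\|\mu-\nu\|_K$ and $W(\mu,\nu)$ are finite.

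First I would dispatch the easy inequality $\|\mu-\nu\|_K\leq W(\mu,\nu)$. Pick any $\gamma\in\textrm{Marg}(\mu,\nu)$ and any $f$ with $Lip(f)\leq 1$. Because the marginals of $\gamma$ are $\mu$ and $\nu$,
$$\langle f,\mu\rangle-\langle f,\nu\rangle=\int\bigl(f(x)-f(y)\bigr)d\gamma(x,y),$$
and $|f(x)-f(y)|\leq d_X(x,y)$ pointwise. Taking absolute values, then the supremum over $f$ and the infimum over $\gamma$, yields the bound. This direction requires no duality theory.

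The reverse inequality $W(\mu,\nu)\leq\|\mu-\nu\|_K$ is the substantive content, and I would obtain it through Kantorovich duality. Viewing $W$ as a linear program on the cone of positive measures with fixed marginals, the dual problem is
$$\sup\Bigl\{\int\phi\,d\mu+\int\psi\,d\nu\;:\;\phi(x)+\psi(y)\leq d_X(x,y)\Bigr\},$$
and strong duality can be proved either by a Hahn-Banach separation argument in $C_b(X)\oplus C_b(X)$ or by a minimax theorem applied to the bilinear pairing $(\phi,\psi,\gamma)\mapsto\int\phi\,d\mu+\int\psi\,d\nu+\int(d_X-\phi\oplus\psi)d\gamma$. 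Once duality is in hand, I would exploit the specific structure of the cost $c=d_X$ via the $c$-transform: replacing $\psi$ by $\psi^c(x)=\inf_y(d_X(x,y)-\psi(y))$ cannot decrease the dual objective and produces a 1-Lipschitz function; iterating once more and using the triangle inequality forces the optimal pair to have the form $(\phi,-\phi)$ with $Lip(\phi)\leq 1$. The dual value then collapses to $\sup_{Lip(\phi)\leq 1}(\langle\phi,\mu\rangle-\langle\phi,\nu\rangle)=\|\mu-\nu\|_K$.

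For the attainment statement under tightness, I would rely on Prokhorov's theorem: if $\mu$ and $\nu$ are tight then $\textrm{Marg}(\mu,\nu)$ is tight (a tightness set for $\gamma$ is built as a product of tightness sets for $\mu$ and $\nu$), hence relatively compact in the weak-$*$ topology of probability measures on $X\times X$. The functional $\gamma\mapsto\int d_X\,d\gamma$ is lower semicontinuous for weak-$*$ convergence: approximate $d_X$ from below by the bounded continuous truncations $d_X\wedge n$ and invoke the Portmanteau theorem together with monotone convergence. Combined with the closedness of $\textrm{Marg}(\mu,\nu)$, this gives existence of a minimizer.

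The main obstacle is genuinely the strong duality statement, as this is the crux of the Kantorovich-Rubinstein theorem; in the non-compact setting one has to be attentive to the integrability of $d_X$ against $\mu$ and $\nu$ (guaranteed here by $\mu,\nu\in\mathcal M_1$) to ensure that the dual maximizers are themselves $\mu$- and $\nu$-integrable, so that the $c$-transform manipulation is meaningful. The remaining steps, while technical, are standard consequences of Prokhorov's theorem and monotone approximation.
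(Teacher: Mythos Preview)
The paper does not actually supply a proof of this theorem: immediately after the statement it simply writes ``For the proof and comments, see \cite{Drea}.'' There is therefore nothing in the paper to compare your argument against.

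That said, your outline is a correct and standard route to the Kantorovich--Rubinstein theorem. The easy direction is exactly as you write. For the hard direction, reducing the general dual pair $(\phi,\psi)$ to the antisymmetric form $(\phi,-\phi)$ with $Lip(\phi)\le 1$ via the $c$-transform is the classical device when the cost is a metric, and your remark that membership in $\mathcal M_1$ is what guarantees integrability of the dual potentials is the right technical point to flag. Your attainment argument via Prokhorov and lower semicontinuity of $\gamma\mapsto\int d_X\,d\gamma$ is likewise the standard one; note that on a Polish space every finite Borel measure is automatically tight, so the hypothesis is in fact vacuous in the present setting (the paper assumes $X$ Polish throughout this section).
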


For the proof and comments, see \cite{Drea}.

From now on, the distance we adopt to compare measures is defined as 
$$
\|\mu-\nu\|_{s,p} =  W_{0,\infty} (\mu,\nu) + W_{s,p}(\mu,\nu) \; .
$$

To make a last remark in this section and in view of what we said in the introduction about random variables, to prove the continuity of the flow on $M_{s,p}$, for all $s\geq 0$, it suffices to prove an inequality of the kind 
$$
\|\Psi(t)u-\Psi(t)v\|_{H^s} \leq C_1(t,u,v) \|u-v\|_{H^s} + C_2(t,u,v) (\|u\|_{H^s}+\|v\|_{H^s})\|u-v\|_{L^2}\; ,
$$
where $C_i(t,u,v)$ depends on time and the $L^2$ norms of $u$ and $v$.

 \section{Deterministic properties of KdV}
 
In this section, we present a sketch of the general theory of well-posedness for the periodic KdV equation providing some bilinear estimates that will be important in the forthcoming. We do not pretend to be exhaustive here, so some technical lemmas will be admitted and some proofs just sketched, referring to \cite{bourgain}, \cite{KPVbil}, \cite{CKSTTsha} and references therein for details and further results.

The Cauchy problem for the KdV equation on the torus reads as
\begin{equation}\label{pb}
\begin{cases}
\partial_tu+\partial_x^3u+\frac12\partial_xu^2=0, \quad x\in\T\\
u(x,0)=u_0(x).
\end{cases}
\end{equation}

The corresponding integral equation is thus the following one
\begin{equation}\label{intform}
u(t)=S(t)u_0-\int_0^tS(t-t')\frac12\partial_x(u^2)(t')dt'
\end{equation}
where $S(t)=e^{-t\partial_x^3}$ is the flow of the linearized around $0$ equation. We will formally denote by $u=\Psi(t)u_0$ the solution to \eqref{pb} with initial condition $u_0$. Space-time Fourier transform allows us to explicitly write the linear propagator $S(t)$ as
\begin{equation}\label{s}
S(t)u_0\sim\sum_{k\in\mathbb{Z}}\int e^{ i(kx + \tau t)}\delta(\tau-k^3)\hat{u_0}(k)d\tau,
\end{equation}
$\delta(x)$ being the 1-dimensional Dirac mass. 

Notice that this formula shows that the linear solution of \eqref{pb} has its space-time Fourier transform supported on the cubic $\tau=k^3$. In \cite{bourgain} the author showed that, after a localization in time, the Fourier transform of the nonlinear solution still concentrates near the same cubic. This feature suggests the introduction of the following functional spaces (called \emph{Bourgain spaces}).

\begin{definition}
We denote by $X^{s,b}$, $Y^s$, $Z^s$ the spaces of functions $u:\T\times\mathbb{R}\rightarrow\mathbb{R}$ with mean value zero (i.e. $\int_{\mathbb{T}}u(x,t)dx=0$) such that the corresponding norms 
\begin{equation}\label{x}
\|u\|_{X^{s,b}}=
\||k|^s\langle\tau-k^3\rangle^b\hat{u}(k,\tau)\|_{l^2_k,L^2_\tau},
\end{equation}
\begin{equation}\label{y}
\|u\|_{Y^s}=\|u\|_{X^{s,\frac12}}+
\||k|^s\hat{u}(k,\tau)\|_{l^2_k,L^1_\tau},
\end{equation}
\begin{equation}\label{z}
\|u\|_{Z^s}=\|u\|_{X^{s,-\frac12}}+
\left\|\frac{|k|^s\hat{u}(k,\tau)}{\langle\tau-k^3\rangle}\right\|_{l^2_k,L^1_\tau}
\end{equation}
are finite (we are denoting with $\displaystyle\|f\|_{l^2_k}=\sum_{k\in\mathbb{Z}}|\hat{f}(k)|^2)$.
\end{definition}

\begin{remark}
The conservation of the spatial mean allows us to assume that the initial data $u_0$ satisfies a mean-zero assumption. This fact will be important in the forthcoming, and moreover it makes the homogeneous and non homogeneous versions of the Bourgain spaces (i.e. with the weights $|k|^s$ or $\langle k\rangle^s$) equivalent.
\end{remark}

\begin{remark}
Here and in the following we shall just deal with the case $s\geq0$, referring to \cite{KPVbil} and \cite{CKSTTsha} for more general results in the negative case.
\end{remark}

\subsection{Local theory}

We present here a local existence result for the periodic KdV equation that will rely on a contraction argument on the Bourgain spaces. In order to make our estimates work, as already pointed out, we shall need to apply a smooth cut-off function in order to localize the solution in time.
In the sequel $\eta(t)$ will thus represent a smooth bump function supported in $[-2,2]$, with $\eta=1$ in $[-1,1]$, and we will denote with $\eta_T=\eta(t/T)$ the corresponding rescaled function.

In the following two propositions we collect the crucial estimates needed to make the contraction argument work.

\begin{proposition}[Linear estimates]\label{linp}
For every $s\geq 0$, there exist $C_1$ and $C_2$ such that for every $\Phi$ and every $F$ supported in $\T\times [-3,3]$, the following estimates hold
\begin{equation}\label{lin1}
\|\eta(t)S(t)\phi\|_{Y^s}\leq C_1\|\phi\|_{H^s}
\end{equation}
\begin{equation}\label{lin2}
\left\|\eta(t)\int^t_0S(t-t')F(t')dt'\right\|_{Y^s}\leq C_2 \|F\|_{Z^s}
\end{equation}
with constants $C_1$, $C_2$ independent of $\phi$ and $F$.
\end{proposition}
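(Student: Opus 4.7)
The plan is to handle the two estimates separately: \eqref{lin1} is essentially a direct Fourier-side computation, while \eqref{lin2} requires a decomposition of the multiplier $M_{k,\tau}(t) = (e^{it(\tau - k^3)} - 1)/(i(\tau - k^3))$ into resonant and non-resonant pieces. For \eqref{lin1}, formula \eqref{s} shows that the space-time Fourier transform of $\eta(t)S(t)\phi$ equals $\hat\eta(\tau - k^3)\hat\phi(k)$ up to an absolute constant, so substituting into \eqref{x} and separating the $k$ and $\tau$ integrations yields
$$
\|\eta(t)S(t)\phi\|_{X^{s,1/2}} = \|\langle\sigma\rangle^{1/2}\hat\eta(\sigma)\|_{L^2_\sigma}\,\|\phi\|_{H^s},
$$
and analogously $\||k|^s \hat\eta(\tau - k^3)\hat\phi(k)\|_{\ell^2_k L^1_\tau} = \|\hat\eta\|_{L^1}\|\phi\|_{H^s}$. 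Both constants are finite since $\eta$ is Schwartz, which proves \eqref{lin1}.

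For \eqref{lin2}, I would start from the Fourier representation
$$
\eta(t)\int_0^t S(t-t')F(t')\,dt' = \sum_k e^{i(kx + k^3 t)}\,\eta(t) \int \hat F(k,\tau)\, M_{k,\tau}(t)\,d\tau,
$$
fix a smooth cutoff $\chi$ with $\chi \equiv 1$ on $[-1,1]$ and $\mathrm{supp}\,\chi \subset [-2,2]$, and split $\hat F$ via $\chi(\tau - k^3)$ into a resonant piece supported on $|\tau - k^3|\leq 2$ and a non-resonant piece supported on $|\tau - k^3|\geq 1$.

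On the non-resonant part I would write $M_{k,\tau}(t) = \frac{e^{it(\tau - k^3)}}{i(\tau - k^3)} - \frac{1}{i(\tau - k^3)}$. The constant term contributes a time-only factor $\eta(t)$ multiplied by a pure frequency-side function, so its $Y^s$ norm is bounded by a constant depending on $\eta$ times $\|\hat F(k,\tau)/(\tau - k^3)\|_{\ell^2_k L^1_\tau}$, which is precisely the $\ell^2_k L^1_\tau$ component of the $Z^s$ norm. For the oscillatory term, the time Fourier transform of $\eta(t)e^{it(\tau - k^3)}$ equals $\hat\eta(\tau' - \tau + k^3)$; Plancherel and the rapid decay of $\hat\eta$ bound the resulting $X^{s,1/2}$ norm by $\|F\|_{X^{s,-1/2}} \leq \|F\|_{Z^s}$. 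On the resonant part I would Taylor-expand $M_{k,\tau}(t) = \sum_{n\geq 0} \frac{(it)^n(\tau - k^3)^n}{(n+1)!}$ and estimate term by term: the frequency factor $(\tau - k^3)^n$ is bounded by $2^n$ on the support, while $\|t^n\eta(t)\|_{Y^0}$ grows at most like $C^{n+1}$ because $\eta$ is Schwartz, so division by $(n+1)!$ secures convergence.

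The main obstacle I expect is the bookkeeping in the non-resonant region: correctly isolating the purely time-dependent piece $\eta(t)/(i(\tau - k^3))$, whose $Y^s$ contribution matches the weighted $L^1_\tau$ component of $\|\cdot\|_{Z^s}$, and separating it from the oscillatory piece that feeds into $\|F\|_{X^{s,-1/2}}$. These computations are classical and I would refer to \cite{bourgain,KPVbil,CKSTTsha} for the precise constants and technical details.
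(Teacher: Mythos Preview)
Your treatment of \eqref{lin1} matches the paper's exactly: identify the space-time Fourier transform of $\eta(t)S(t)\phi$ as $\hat\eta(\tau-k^3)\hat\phi(k)$ and integrate out the $\tau$ variable using the smoothness of $\eta$.

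For \eqref{lin2} your argument is correct but follows a genuinely different route from the paper. The paper exploits the support hypothesis on $F$ to write, for $t\in[-2,2]$ and $t'\in[-3,3]$,
\[
\chi_{[0,t]}(t')=\tfrac12\big(a(t')-a(t-t')\big),\qquad a(t)=\mathrm{sgn}(t)\,\tilde\eta(t),
\]
with $\tilde\eta$ a wider cutoff; this decomposes the Duhamel term into a homogeneous piece $I_1=\eta(t)S(t)\int a(t')S(-t')F(t')\,dt'$, reduced to \eqref{lin1}, and a convolution piece $I_2$ whose Fourier transform is $\hat a(\tau-k^3)\hat F(k,\tau)$, handled via the decay $|\hat a(\tau)|=O(\langle\tau\rangle^{-1})$. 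Your approach instead splits the Duhamel multiplier $M_{k,\tau}(t)$ into resonant and non-resonant frequencies and Taylor-expands on the resonant set. Both are standard; the paper's decomposition is shorter and makes essential use of the hypothesis $\mathrm{supp}\,F\subset\T\times[-3,3]$, whereas your argument does not need it and is in that sense slightly more general.

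One small gap to close: for the non-resonant oscillatory piece $\eta(t)e^{it\tau}\hat F(k,\tau)/(i(\tau-k^3))$ you only bound the $X^{s,1/2}$ part of the $Y^s$ norm. You still need the $\||k|^s\,\cdot\,\|_{\ell^2_kL^1_\tau}$ component; this follows immediately from Young's inequality for the convolution with $\hat\eta$ together with $|\tau-k^3|\sim\langle\tau-k^3\rangle$ on the non-resonant set, giving control by the second term of $\|F\|_{Z^s}$. Also, in the constant-term bound remember to carry the weight $|k|^s$ so that the expression you obtain is literally the $\ell^2_kL^1_\tau$ component of the $Z^s$ norm.
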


\begin{proposition}[Bilinear estimates]\label{bilp}
For every $s\geq 0$, there exist $C_3$ and $C_4$ such that the following estimates hold
\begin{equation}\label{bil}
\|\eta(t)\partial_x(uv)\|_{Z^s}\leq C_3\left(\|u\|_{Y^s}\|v\|_{Y^0}+\|u\|_{Y^0}\|v\|_{Y^s}\right),
\end{equation}
\begin{equation}\label{bil2}
\|\eta_T(t)\partial_x(uv)\|_{Z^s}\leq C_4 T^{1/12}\left(\|u\|_{Y^s}\|v\|_{Y^0}+\|u\|_{Y^0}\|v\|_{Y^s}\right).
\end{equation}
with constants $C_3$, $C_4$ independent of $T$, $u$ and $v$.
\end{proposition}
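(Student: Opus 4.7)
The plan is to transfer the estimate to the Fourier side and exploit the algebraic identity that makes bilinear interactions in KdV mildly dispersive. Using the mean-zero hypothesis, which forces $k,k_1,k_2 \neq 0$, I would write
\begin{equation*}
\widehat{\partial_x(uv)}(k,\tau) = ik \sum_{\substack{k_1+k_2=k\\ k_1,k_2\neq 0}} \int \hat u(k_1,\tau_1)\,\hat v(k_2,\tau-\tau_1)\, d\tau_1,
\end{equation*}
and set $f(k,\tau) = |k|^s\langle\tau-k^3\rangle^{1/2}|\hat u(k,\tau)|$, $g(k,\tau)=\langle\tau-k^3\rangle^{1/2}|\hat v(k,\tau)|$, turning \eqref{bil} into a weighted convolution inequality between nonnegative $L^2$ functions. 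The decisive observation is the algebraic identity
\begin{equation*}
(\tau-k^3)-(\tau_1-k_1^3)-\bigl((\tau-\tau_1)-k_2^3\bigr) = 3\,k\,k_1\,k_2,
\end{equation*}
so that the largest of the three ``modulations'' $\langle\tau-k^3\rangle$, $\langle\tau_1-k_1^3\rangle$, $\langle(\tau-\tau_1)-k_2^3\rangle$ is always at least $|kk_1k_2|$. Taking square roots, this yields a gain of $|kk_1k_2|^{1/2}$ in the denominator which, combined with $|k|\le |k_1|+|k_2|$ and $|k|^s \lesssim |k_1|^s+|k_2|^s$, absorbs both the outer derivative and the $|k|^s$ weight after symmetrising in the roles of $u$ and $v$ (this is what produces the sum of two products on the right-hand side).

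I would then split the proof into three cases according to which modulation is largest and, in each case, reduce the estimate to a Cauchy--Schwarz bound on a convolution in $(k_1,\tau_1)$. The core computation is to check that the resulting multiplier is bounded, which is by now a classical verification going back to \cite{KPVbil}: the $\tau_1$-integral produces $\log\langle\cdot\rangle/\langle\cdot\rangle$ decay, and the remaining sum over $k_1$ is controlled using $|kk_1k_2|\gtrsim\max(|k_1|,|k_2|)$ when $k\neq 0$. The $l^2_kL^1_\tau$ pieces appearing in the definitions of $Y^s$ and $Z^s$ are treated in parallel by trading an $L^1_\tau$ norm for an $L^2_\tau$ norm weighted by $\langle\tau-k^3\rangle^{-1/2}$, which only costs a logarithm absorbed into the constants.

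To pass from \eqref{bil} to \eqref{bil2} with the gain $T^{1/12}$, I would invoke the standard time-localisation lemma
\begin{equation*}
\|\eta_T w\|_{X^{s,b'}} \lesssim T^{b-b'}\|w\|_{X^{s,b}}, \qquad b'<b,\ |b|,|b'|<\tfrac{1}{2},
\end{equation*}
in combination with a slightly sharper version of the bilinear estimate in which the exponent $1/2$ is lowered by a small $\alpha>0$ on both sides (the modulation argument still leaves enough margin in $|kk_1k_2|^{1/2-\delta}$ for this to hold, provided $\delta$ is chosen small). Interpolating between \eqref{bil} and this sharper estimate and optimising $\alpha$ yields exactly the factor $T^{1/12}$; the specific value $1/12$ comes from a Bourgain-type $L^4_{x,t}$ Strichartz inequality for the Airy propagator used to handle the case where the output modulation $\langle\tau-k^3\rangle$ is dominant.

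The main obstacle I expect is precisely this last case, in which $|\tau-k^3|$ itself is the largest modulation: here one cannot freely integrate in $\tau_1$ and must proceed by duality, pairing $\partial_x(uv)$ against a generic element of $X^{-s,1/2}$ and reducing to an $L^4$-Strichartz-type inequality for Bourgain functions on $\mathbb{T}\times\mathbb{R}$. Tracking the precise exponent $1/12$ requires careful bookkeeping in this endpoint, and the mean-zero hypothesis is essential throughout to exclude $k=0$ (and with it a catastrophic divergence) from the denominator $|kk_1k_2|$.
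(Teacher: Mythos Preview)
Your backbone matches the paper's: the resonance identity $3kk_1k_2$, the three-region split by dominant modulation, and $|k|^s\lesssim |k_1|^s+|k_2|^s$ to symmetrise. The technical implementation inside each region differs. Rather than Cauchy--Schwarz multiplier bounds in the \cite{KPVbil} style, the paper invokes Bourgain's periodic $L^4$ Strichartz inequality (Proposition~\ref{bour}, with weight exponent $2/3$) uniformly in \emph{all three} regions: in region~A it writes the output as $\|FG\|_{L^2}\le\|F\|_{L^4}\|G\|_{L^4}$, and in regions~B,~C it uses the dual $L^{4/3}$ bound together with H\"older. For the $T^{1/12}$ gain the paper does not pass through the abstract time-localisation lemma you quote; instead it interpolates directly inside the $L^4$ bound, splitting $\langle\tau-k^3\rangle^{2/3}=\langle\tau-k^3\rangle\cdot\langle\tau-k^3\rangle^{-1/3}$ and applying H\"older with exponents $(3/2,3)$ to obtain $\|F\|_{L^4}\lesssim\|\eta_T u\|_{X^{s,1/2}}^{2/3}\,\||\partial_x|^s\eta_T u\|_{L^2_{x,t}}^{1/3}$, after which $\|\eta_T\|_{L^4_t}^{1/3}\sim T^{1/12}$ delivers the exact factor. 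Your route via a sub-endpoint bilinear estimate would also give some $T^\alpha$, but pinning down $\alpha=1/12$ ultimately needs the same $L^4$ input, and the paper's computation is more direct.

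One point in your sketch is too quick. The $l^2_kL^1_\tau$ part of the $Z^s$ norm cannot be handled by ``trading an $L^1_\tau$ norm for an $L^2_\tau$ norm weighted by $\langle\tau-k^3\rangle^{-1/2}$, which only costs a logarithm'': that trade is exactly logarithmically divergent, and this endpoint failure is precisely why the spaces $Y^s$, $Z^s$ carry the auxiliary $l^2_kL^1_\tau$ piece in the first place. The paper devotes a separate lemma (Lemma~\ref{2}) to this term, using a duality pairing in region~A and, in regions~B and~C, a secondary exponent $\rho\in(1/3,1/2)$ in the decomposition $(1+|\tau-k^3|)^{-1}=(1+|\tau-k^3|)^{-(1-\rho)}(1+|\tau-k^3|)^{-\rho}$ so that Cauchy--Schwarz in $\tau$ is integrable while the remaining weight still feeds into the $L^{4/3}$ dual estimate. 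You will need a comparable argument there; the rest of your plan is sound.
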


\begin{proof}\emph{Linear estimates.}
We begin by writing, by definition,
$$
\eta(t)S(t)\phi = \eta(t)\sum_k\hat{\phi}(k)e^{i(kx+k^3t)}
$$
and by writing $\eta$ in terms of its Fourier transform and doing a change of variable we get
$$
\eta(t)S(t)\phi = c \sum_k\hat{\phi}(k)e^{ikx}\int_{-\infty}^{+\infty}e^{it\tau}\hat{\eta}(\tau-k^3)d\tau.
$$
Thus we have
\begin{eqnarray*}
\|\eta S(t)\phi\|_{X^{s,\frac12}}^2&\leq&
c\sum_k|k|^{2s}|\hat{\phi}(k)|^2\int_{-\infty}^{+\infty}(1+|\tau-k^3|)|\hat{\eta}(\tau-k^3)|^2d\tau
\\&\leq&
c\|\phi\|_{H^s}^2,
\end{eqnarray*}
and
\begin{eqnarray*}
\|\, |k|^s\mathcal{F}(\eta(t)S(t)\phi)\|_{L^2(dk)L^1(d\tau)}^2&\leq&
c\sum_k|k|^{2s}|\hat{\phi}(k)|^2\left(\int_{-\infty}^{+\infty}|\hat{\eta}(\tau-k^3)|d\tau\right)^2
\\&\leq&
c\|\phi\|_{H^s}^2,
\end{eqnarray*}
where $\mathcal F$ is the Fourier transform and this concludes the proof of \eqref{lin1}.

Turning to \eqref{lin2}, we write 

\begin{equation}\label{lin22}
\eta(t)\int^t_0S(t-t')F(t')dt' = I_1 + I_2
\end{equation}
with 
$$
I_1 = \eta(t)S(t)\int_{\mathbb{R}}a(t')S(-t')F(t')dt'
$$
and
$$
I_2 = \eta(t)\int_{\mathbb{R}}a(t-t')S(t-t')F(t')dt'
$$
where we have set $a(t)=\rm{sgn}(t)\tilde{\eta}(t)$ with $\tilde{\eta}$ being a smooth cut-off supported in $[-10,10]$ and equal to $1$ in $[-5,5]$. With this choice we have indeed, for all $t\in[-2,2]$ and $t'\in[-3,3]$,
\begin{equation*}
\chi_{[0,t]}(t')=\frac12(a(t')-a(t-t'))
\end{equation*}
so that \eqref{lin22} holds (we assumed that $F$ was supported in $\mathbb{T}\times[-3,3]$). 

We now estimate the righthandside of \eqref{lin22} term by term. Due to \eqref{lin1}, to estimate the contribution given by $I_1$, it is enough to show that
\begin{equation}\label{i1}
\left\|\int_{\mathbb{R}}a(t')S(-t')F(t')dt'\right\|_{H^s}\lesssim\|F\|_{Z^s}.
\end{equation}
Using Fourier transform and recalling \eqref{z} we have
\begin{eqnarray*}
\left\|\int_{\mathbb{R}}a(t')S(-t')F(t')dt'\right\|_{H^s}&=&
\left\||k|^s\mathcal{F}\left(\int_{\mathbb{R}}a(t')S(-t')F(t')dt'\right)(k)\right\|_{l^2_k}
\\
&=&
\left\||k|^s\int_{\mathbb{R}}\hat{a}(\tau-k^3)\hat{F}(k,\tau)d\tau\right\|_{l^2_k}\; .
\end{eqnarray*}

Since $|\hat{a}(\tau)|=O(\langle\tau\rangle^{-1})$, the proof of \eqref{i1} is concluded. Let us now turn to $I_2$. Neglecting the cutoff $\eta(t)$ and space-time Fourier transforming yields
\begin{equation*}
\mathcal{F}\left(\int a(t-t')S(t-t')F(t')dt'\right)(k,\tau)\sim\hat{a}(\tau-k^3)\hat{F}(k,\tau).
\end{equation*}
The claimed estimate then follows from \eqref{y}, \eqref{z} and the decay estimate for $\hat{a}$ used above.

\end{proof}

\emph{Proof of Bilinear estimates.}
We adapt the proof of Lemmas (7.41) and (7.42) in \cite{bourgain} modifying it in order to obtain the estimates we need. In this part we shall neglect the cutoff in order to simplify the presentation, the local case being obtained by standard regularization arguments.
Writing $w=\partial_x(uv)$ we thus need to estimate
\begin{equation}\label{norms}
\|w\|_{Z^s}=\left(\sum_{k\neq0}|k|^{2s}\int_{-\infty}^{+\infty}\frac{|\hat{w}(k,\tau)|^2}{1+|\tau-k^3|}d\tau\right)^\frac12+
\left(\sum_{k\neq0}|k|^{2s}\left(\int_{-\infty}^{+\infty}\frac{|\hat{w}(k,\tau)|}{1+|\tau-k^3|}d\tau\right)^2\right)^\frac12.
\end{equation}

First of all, notice that
\begin{equation*}
\hat{w}(k,\tau)=i k\:\widehat{uv}(k,\tau)=ik(\hat{u}\ast\hat{v})(k,\tau)
\end{equation*}
where $*$ denotes the standard convolution product, 
so that
\begin{equation}\label{west}
|\hat{w}(k,\tau)|\leq |k|\sum_{k_1\neq0}\int d\tau_1\big(|\hat{u}(k_1,\tau_1)|\:|\hat{u}(k-k_1,\tau-\tau_1)|\big).
\end{equation}
We define for every $s\geq 0$
\begin{equation}\label{c1}
c_s(k,\tau)=|k|^s(1+|\tau-k^3|)^{1/2}|\hat{u}(k,\tau)|,
\end{equation}
\begin{equation}\label{c2}
d_s(k,\tau)=|k|^s(1+|\tau-k^3|)^{1/2}|\hat{v}(k,\tau)|
\end{equation}

We now recall the following result from \cite{bourgain} that will be of crucial importance in the sequel.

\begin{proposition}\label{bour}
Let $f$ be a function defined on the torus $\mathbb{T}^2$. The following estimates hold
\begin{equation}\label{cit}
\|f\|_{L^4(\mathbb{T}^2)}\lesssim\left(\sum_{m,n\in\mathbb{Z}}(1+|n-m^3|)^{2/3}|\hat{f}(m,n)|^2\right)^{1/2},
\end{equation}
\begin{equation}\label{cit2}
\left(\sum_{m,n\in\mathbb{Z}}(1+|n-m^3|)^{-2/3}|\hat{f}(m,n)|^2\right)^{1/2}\lesssim\|f\|_{L^{4/3}(\mathbb{T}^2)}.
\end{equation}
\end{proposition}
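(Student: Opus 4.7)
The plan is to reduce the two estimates to one via duality and then prove the $L^4$ bound by combining Plancherel with a number-theoretic counting argument for integer cubes.

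First, observe that the Fourier multiplier $(1+|n-m^3|)^{-1/3}$ is real and self-adjoint, so defining $A$ by $\widehat{A\phi}(m,n) = (1+|n-m^3|)^{-1/3}\phi(m,n)$ exhibits the two inequalities as the statements $A:\ell^2(\mathbb Z^2)\to L^4(\mathbb T^2)$ bounded and its adjoint $A^*:L^{4/3}(\mathbb T^2)\to \ell^2(\mathbb Z^2)$ bounded. These are equivalent, so I would only prove \eqref{cit}. Squaring and Plancherel reduce the claim to
\[
\sum_{M,N}|(\hat f\ast\hat f)(M,N)|^2\,\lesssim\,\Bigl(\sum_{m,n}(1+|n-m^3|)^{2/3}|\hat f(m,n)|^2\Bigr)^2,
\]
which, after writing $\hat f(m,n)=(1+|n-m^3|)^{-1/3}G(m,n)$, is equivalent to the $\ell^2$-boundedness of the bilinear form
\[
T(G,G)(M,N)=\sum_{m,n}\frac{G(m,n)\,G(M-m,N-n)}{(1+|n-m^3|)^{1/3}\bigl(1+|(N-n)-(M-m)^3|\bigr)^{1/3}}.
\]

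The heart of the argument is a counting estimate on integer cubes. I would re-parameterize by $\lambda_i=n_i-m_i^3$ and decompose $G$ dyadically as $G=\sum_j G_j$ with $G_j$ supported in $|\lambda|\sim 2^j$. For fixed $(M,N)$ and scales $(j_1,j_2)$, the algebraic identity
\[
m_1^3+m_2^3=M\bigl(M^2-3m_1m_2\bigr)\qquad\text{when }m_1+m_2=M
\]
turns the constraint $\lambda_1+\lambda_2=N-(m_1^3+m_2^3)$ into a quadratic equation in $m_1$ with leading coefficient $3M$. For $M\neq 0$ this quadratic has at most two integer roots, which is the source of the sharp counting gain; the one-dimensional convolution inequality $\sum_\lambda(1+|\lambda|)^{-2/3}(1+|C-\lambda|)^{-2/3}\lesssim(1+|C|)^{-1/3}$ then handles the remaining $\lambda_1$-summation. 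Combining the pointwise Cauchy--Schwarz bound with this count, summing over $(M,N)$ by almost-orthogonality, and assembling the dyadic pieces $(j_1,j_2)$ against the $2^{-(j_1+j_2)/3}$ weights delivers the desired $\ell^2$-estimate. The degenerate slice $M=0$, where the algebraic gain collapses, would be treated separately: one forces $m_1=-m_2$, and the resulting one-parameter sum admits a direct $\ell^1$-type bound (or is simply excluded by the mean-zero convention on Bourgain spaces).

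The principal obstacle is that the exponent $1/3$ is sharp. A naive approach that Cauchy--Schwarzes in the $\lambda$-direction first and then invokes only the linear Strichartz estimate $\|\sum_k e^{i(kx+k^3t)}a_k\|_{L^4(\mathbb T^2)}\lesssim\|a\|_{\ell^2}$ (itself a consequence of the $O(1)$ count of solutions of $k_1+k_2=k$, $k_1^3+k_2^3=n$) loses an endpoint in $b$ and only yields $\|f\|_{L^4}\lesssim\|f\|_{X^{0,b}}$ for $b>1/2$. Reaching $b=1/3$ demands that the cubic curvature of $\tau=k^3$ be exploited through Bourgain's refined two-variable counting, together with a careful dyadic summation in which the gain $(1+|C|)^{-1/3}$ from the $\lambda$-convolution exactly balances the decay $(1+|\lambda|)^{-1/3}$ in the weight.
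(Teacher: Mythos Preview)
The paper does not prove this proposition; the line following its statement reads simply ``See \cite{bourgain}.'' Your outline---reduce \eqref{cit2} to \eqref{cit} by duality, square and apply Plancherel, and extract the gain from the identity $k_1^3+k_2^3=(k_1+k_2)^3-3(k_1+k_2)k_1k_2$, which for fixed $k_1+k_2\neq0$ turns the constraint on $k_1^3+k_2^3$ into a quadratic with at most two integer roots---is precisely the skeleton of Bourgain's argument in the cited reference, so in that sense you are aligned with (and go well beyond) what the paper itself offers.

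The passage ``combining the pointwise Cauchy--Schwarz bound with this count, summing over $(M,N)$ by almost-orthogonality, and assembling the dyadic pieces'' is, however, where the proof actually lives, and your sketch does not show that the bookkeeping closes at the endpoint $b=1/3$. Concretely: if one Cauchy--Schwarzes in $(m_1,\lambda_1)$ at fixed $(M,N)$ and then applies your convolution bound, the residual kernel is $\sum_{m_1}(1+|C(m_1)|)^{-1/3}$ with $C(m_1)=N-M^3+3Mm_1(M-m_1)$, whose summand decays only like $|m_1|^{-2/3}$. The dyadic localisation caps $|C(m_1)|$ at $2^{\max(j_1,j_2)}$, but one must then track how the resulting factor---which depends on $|M|$ through the spacing of the values of $C$---interacts with the $(M,N)$- and $(j_1,j_2)$-summations, and nothing in the outline explains why this yields $b=1/3$ rather than merely $b>1/3$. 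That balancing is exactly the substance of Bourgain's proof, which first establishes the free-solution $L^4$ Strichartz bound from the two-point count and then layers the modulation variable on top with care; your sketch names the right ingredients but leaves the delicate step to the reader.
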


\emph{Proof of Lemma \ref{bour}}
See \cite{bourgain}.
\begin{remark}
We shall in fact use this result in the following versions, which are implied by \eqref{cit} and \eqref{cit2} since $\eta$ has compact support included in $]-\pi,\pi[$. We do not make any assumption on the temporal support of $f$.
\begin{equation*}
\|\eta(t)f\|_{L^4(\mathbb{R}\times\mathbb{T})}\lesssim\left(\int d\tau\sum_{k\in\mathbb{Z}}(1+|\tau-k^3|)^{2/3}|\hat{\eta f}(\tau,k)|^2\right)^{1/2},
\end{equation*}
and
\begin{equation*}
\left(\int_{\R}\sum_{k\in \Z} (1+|\tau-k^3|)^{-2/3}|\hat{\eta f}(\tau,k)|^2d\tau\right)^{1/2}\lesssim\|\eta(t)f\|_{L^{4/3}(\mathbb{T}^2)}.
\end{equation*}

\end{remark}
The rest of the proof is essentially contained in the ones of the following two lemmas.

\begin{lemma}\label{1} For every $u$ and $v$ in $Y^s$ and with $w = \partial_x (uv)$ we have
\begin{equation}\label{ll1}
\left(\sum_{k\neq0}|k|^{2s}\int_{-\infty}^{+\infty}\frac{|\hat{w}(k,\tau)|^2}{1+|\tau-k^3|}d\tau\right)^\frac12\lesssim
\|u\|_{Y^s}\|v\|_{Y^0}+\|u\|_{Y^0}\|v\|_{Y^s} \; .
\end{equation}
\end{lemma}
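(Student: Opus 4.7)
The proof follows the classical Bourgain bilinear estimate strategy. First I would expand
$$\hat{w}(k,\tau) = ik\,(\hat u \ast \hat v)(k,\tau),$$
and note that the mean-zero constraint built into $Y^s$ forces $k_1 \neq 0$ and $k_2 := k-k_1 \neq 0$, so $|k_1|, |k_2| \geq 1$. The engine of the whole argument is the algebraic identity
$$\tau - k^3 = (\tau_1 - k_1^3) + (\tau_2 - k_2^3) - 3\,k\,k_1\,k_2,$$
valid whenever $k = k_1+k_2$ and $\tau = \tau_1+\tau_2$, which in particular implies
$$\max \bigl( 1+|\tau-k^3|,\; 1+|\tau_1-k_1^3|,\; 1+|\tau_2-k_2^3| \bigr) \gtrsim |k\, k_1\, k_2| \geq |k|.$$

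Next, I would express $|\hat u|$ and $|\hat v|$ through the weighted functions $c_s, d_s$ of \eqref{c1}--\eqref{c2}, noting that $\|c_s\|_{\ell^2_k L^2_\tau} = \|u\|_{X^{s,1/2}} \leq \|u\|_{Y^s}$, and analogously for $d_0$ with $\|v\|_{Y^0}$. Then I would split the region of the convolution into three pieces $R_0, R_1, R_2$ according to which of the three resonance weights above is largest. In each region the corresponding weight provides the gain $|k\,k_1\,k_2|^{-1/2}$, which must be used to exchange the overall factor $|k|^{s+1}$ (coming from $\partial_x$ together with the $|k|^s$ weight of the $X^{s,-1/2}$-norm on the left-hand side) for weights $|k_1|^s$ or $|k_2|^s$ on the two factors. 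Since $s \geq 0$, the elementary bound $|k|^s \leq 2^s(|k_1|^s + |k_2|^s)$ distributes the contributions into the symmetric pair $\|u\|_{Y^s}\|v\|_{Y^0}$ and $\|u\|_{Y^0}\|v\|_{Y^s}$.

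After this weight redistribution, in each region one has to estimate a bilinear expression of the form
$$\int \sum_{k_1, k_2} F(k_1,\tau_1) \, G(k_2,\tau_2) \, H(k_1+k_2, \tau_1+\tau_2) \, d\tau_1\, d\tau_2,$$
with $F, G, H$ of controlled $\ell^2 L^2$ norm (up to factors $(1+|\tau-\cdot^3|)^{1/3}$). By Parseval this equals a space-time integral $\int fgh\, dx\,dt$, and H\"older's inequality $\|fgh\|_{L^1} \leq \|f\|_{L^4} \|g\|_{L^4} \|h\|_{L^2}$ (or an $L^{4/3}\cdot L^4$ dual variant, obtained by testing against a normalised $h \in X^{-s,1/2}$) together with Proposition \ref{bour} and its $L^{4/3}$ dual reduces everything to weighted $\ell^2 L^2$ norms with weight $(1+|\tau-k^3|)^{2/3}$, which are dominated by $\|u\|_{X^{s,1/2}}$ and $\|v\|_{X^{0,1/2}}$.

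The hardest case will be $R_0$, where $1+|\tau-k^3|$ is maximal: here the gain $(1+|\tau-k^3|)^{-1/2}$ is the only source of decay, and the delicate sub-case is the high-high-to-low interaction $|k_1|\sim|k_2|\gg|k|$, in which $|k k_1 k_2|^{-1/2} \sim |k|^{-1/2}|k_1|^{-1}$ must absorb $|k|^{s+1}$ after being split between the two factors. It is precisely here that the improved exponent $2/3$ in Bourgain's $L^4$-estimate (as opposed to the classical $1/2$ one gets from free Strichartz bounds) is essential; the careful bookkeeping of weights in this region is what constitutes the main technical obstacle.
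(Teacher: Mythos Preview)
Your proposal is correct and follows essentially the same route as the paper: the resonance identity, the three-region decomposition according to which of $\langle\tau-k^3\rangle$, $\langle\tau_1-k_1^3\rangle$, $\langle\tau_2-k_2^3\rangle$ dominates, the splitting $|k|^s\lesssim|k_1|^s+|k_2|^s$, and then Parseval together with Proposition~\ref{bour} (and its dual $L^{4/3}$ form) to close each piece.

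Two small remarks on the bookkeeping. First, the paper uses the slightly cruder consequence $|3kk_1k_2|\geq\tfrac32 k^2$ of the resonance identity rather than the full factor $|kk_1k_2|$; this immediately gives $|k|\,(1+|\tau-k^3|)^{-1/2}\lesssim 1$ in region $A$ and avoids the high-high-to-low sub-case analysis you flag as delicate. Second, with that simplification region $A$ becomes the \emph{easiest} case (just $\|FG\|_{L^2}\le\|F\|_{L^4}\|G\|_{L^4}$ and \eqref{cit}); it is in regions $B$ and $C$ that one actually invokes the dual estimate \eqref{cit2}, since there the dominant weight sits on an input factor rather than on the output, and one must pass through $\|F\cdot H\|_{L^{4/3}}$ with $H$ carrying no $\langle\tau-k^3\rangle$-decay. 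Your sketch has all the right ingredients; only the labelling of which region needs the sharper tool is inverted.
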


\begin{lemma}\label{2} For every $u$ and $v$ in $Y^s$ and with $w = \partial_x (uv)$ we have
\begin{equation}\label{ll2}
\left(\sum_{k\neq0}|k|^{2s}\left(\int_{-\infty}^{+\infty}\frac{|\hat{w}(k,\tau)|}{1+|\tau-k^3|}d\tau\right)^2\right)^\frac12\lesssim\|u\|_{Y^s}\|v\|_{Y^0}+\|u\|_{Y^0}\|v\|_{Y^s}\; .
\end{equation}
\end{lemma}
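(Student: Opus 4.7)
The proof follows the same lines as Lemma \ref{1}. Starting from the pointwise convolution bound \eqref{west} and the algebraic resonance identity
$$
\tau-k^3 = (\tau_1-k_1^3)+(\tau-\tau_1-(k-k_1)^3)+3k k_1(k-k_1),
$$
I split the $(k_1,\tau_1)$-integration into three regions $\Omega_1,\Omega_2,\Omega_3$ depending on which of the three summands on the right is the largest. In each $\Omega_j$, one of $\langle\tau_1-k_1^3\rangle^{1/2}$, $\langle\tau-\tau_1-(k-k_1)^3\rangle^{1/2}$, or $|k k_1(k-k_1)|^{1/2}$ dominates $\langle\tau-k^3\rangle^{1/2}$; this gain will absorb the small loss coming from the $L^1_\tau$ norm. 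The derivative $|k|$ produced by $\partial_x$ is distributed onto the factors $u$ and $v$ by writing $|k|^s\leq C(|k_1|^s+|k-k_1|^s)$, which produces the symmetric right-hand side of \eqref{ll2}.

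The new ingredient relative to Lemma \ref{1} is the outer $L^1_\tau$ norm. I handle it by a Cauchy--Schwarz in $\tau$ with a small parameter $\delta>0$:
$$
\int\frac{|\hat w(k,\tau)|}{1+|\tau-k^3|}\,d\tau\leq \Big(\int\frac{d\tau}{(1+|\tau-k^3|)^{1+2\delta}}\Big)^{1/2}\Big(\int\frac{|\hat w(k,\tau)|^2}{(1+|\tau-k^3|)^{1-2\delta}}\,d\tau\Big)^{1/2}.
$$
The first factor is a finite constant; the second reduces the left-hand side of \eqref{ll2} to a weighted $l^2_k L^2_\tau$ norm of $|k|^s\hat w$, essentially $\|w\|_{X^{s,-1/2+\delta}}$. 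This is only slightly larger than the $X^{s,-1/2}$ norm of Lemma \ref{1}, and in each region $\Omega_j$ the gain $\langle\tau-k^3\rangle^{1/2}$ coming from the dominant resonance more than compensates for the $\delta$-loss. On the two remaining factors I then apply the $L^4(\R\times\T)$ bound \eqref{cit} and its dual $L^{4/3}$ version \eqref{cit2} from Proposition \ref{bour} to reduce each contribution to a product of an $X^{s,1/2}$ or $X^{0,1/2}$ norm of $u$ with its analogue for $v$, which in turn is controlled by the $Y^s$ and $Y^0$ norms.

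The main obstacle is the non-resonant region where $|k k_1(k-k_1)|\gtrsim\langle\tau-k^3\rangle$ but both $\langle\tau_j-k_j^3\rangle$ are small: here the gain has no temporal decay, and the estimate relies on applying the $L^4$ Strichartz bound simultaneously to both $u$ and $v$. The mean-value-zero assumption plays an essential role, since $k_1\neq 0$ and $k-k_1\neq 0$ prevent a degenerate splitting of the derivative weight. Once each region is bounded, a Cauchy--Schwarz in $k$ and the triangle inequality assemble the three contributions into the claimed estimate \eqref{ll2}.
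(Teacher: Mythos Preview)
Your global Cauchy--Schwarz step, reducing the $l^2_kL^1_\tau$ norm to $\|w\|_{X^{s,-1/2+\delta}}$, is where the argument breaks. On the torus the estimate
\[
\|\partial_x(uv)\|_{X^{0,-1/2+\delta}}\lesssim\|u\|_{Y^0}\|v\|_{Y^0}
\]
is \emph{false} for every $\delta>0$. Take $\hat u=\mathbf 1_{k_1=N}\mathbf 1_{|\tau_1-N^3|\le 1}$ and $\hat v=\mathbf 1_{k_2=1}\mathbf 1_{|\tau_2-1|\le 1}$, so that $\|u\|_{Y^0},\|v\|_{Y^0}\sim 1$. Then $\hat w$ is supported at $k=N+1$ with $|\tau-(N+1)^3|\sim N^2$ and $|\hat w|\sim N$, giving $\|w\|_{X^{0,-1/2+\delta}}\sim N\cdot N^{-(1-2\delta)}=N^{2\delta}\to\infty$. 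This example sits exactly in your region $\Omega_3$: both input modulations are $O(1)$, so the slack in Proposition~\ref{bour} (which lives in the $\langle\tau_1-k_1^3\rangle$ variables) gives nothing, and the relation $|kk_1(k-k_1)|^{1/2}\gtrsim\langle\tau-k^3\rangle^{1/2}$ only produces $|k|/(1+|\tau-k^3|)^{1/2-\delta}\sim|k|^{2\delta}$, which is a loss, not a gain. The claim that ``the gain more than compensates for the $\delta$-loss'' is therefore incorrect in this region.

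The paper avoids this by treating Set $A$ (your $\Omega_3$, essentially) without any Cauchy--Schwarz in $\tau$: it keeps the full weight $(1+|\tau-k^3|)^{-1}$ and argues by duality, pairing against $(a_k)\in l^2$ and forming the auxiliary function $P$ with $\hat P(k,\tau)=a_k|k|/(k^2+|\tau-k^3|)$. The point is that $\int(k^2+|\tau|)^{-2}d\tau\sim|k|^{-2}$, which exactly cancels the extra $|k|$ from the derivative and makes $\|P\|_{L^2}$ bounded; your Cauchy--Schwarz destroys precisely this integrability. For Sets $B$ and $C$, on the other hand, the paper does perform a Cauchy--Schwarz in $\tau$ with an exponent $\rho\in(1/3,1/2)$ playing the role of your $1/2-\delta$, so your idea is correct there --- but Set $A$ genuinely requires the duality argument.
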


Notice that these two results, together with \eqref{norms}, yield \eqref{bil} and thus conclude the proof.

\begin{proof}\emph{Lemma \ref{1}.}
From \eqref{west}, \eqref{c1} and \eqref{c2}  we have
\begin{equation*}
\frac{|k|^s||\hat{w}(k,\tau)|}{(1+|\tau-k^3|)^{1/2}}
\end{equation*}
\begin{equation}\label{ccc}
\leq\sum_{k_1}\int d\tau_1\frac{|k|^{s+1}|k_1|^{-s}|k-k_1|^{-s}c_s(k_1,\tau_1)d_s(k-k_1,\tau-\tau_1)}
{(1+|\tau-k^3|)^{1/2}(1+|\tau_1-k_1^3|)^{1/2}(1+|\tau-\tau_1-(k-k_1)^3)|^{1/2}}.
\end{equation}
Notice that since 
\begin{equation}\label{trick}
|k|^s\leq C_s(|a|^s+|b|^s)\quad \forall a+b=k,
\end{equation} 
picking $a=k_1$ and $b=k-k_1$ we can estimate \eqref{ccc} with
\begin{equation}
C_s\sum_{k_1}\int d\tau_1|k|\frac{(|k_1|^{-s}+|k-k_1|^{-s})c_s(k_1,\tau_1)d_s(k-k_1,\tau-\tau_1)}
{(1+|\tau-k^3|)^{1/2}(1+|\tau_1-k_1^3|)^{1/2}(1+|\tau-\tau_1-(k-k_1)^3)|^{1/2}}
\end{equation}
\begin{equation}\label{uu}
= \sum_{k_1}\int d\tau_1|k|\frac{c_0(k_1,\tau_1)d_s(k-k_1,\tau-\tau_1)+c_s(k_1,\tau_1)d_0(k-k_1,\tau-\tau_1)}
{(1+|\tau-k^3|)^{1/2}(1+|\tau_1-k_1^3|)^{1/2}(1+|\tau-\tau_1-(k-k_1)^3)|^{1/2}}.
\end{equation}
By assumption on $u$, we have that $c_s(0,\tau)=d_s(0,\tau)=0$, so that we may assume $k$, $k_1$ and $k-k_1\neq0$ in \eqref{ccc}. In this situation we have
\begin{equation}\label{k2}
|(\tau-k^3)-[(\tau_1-k_1^3)+(\tau-\tau_1-(k-k_1)^3)]|=|3k_1(k-k_1)k|\geq\frac32k^2.
\end{equation}

We now estimate the sum \eqref{uu} dividing the indexes into three sets as follows:
\begin{equation*}
A=\{k,k_1\neq0:|\tau-k^3|\geq\frac32k^2\}
\end{equation*}
\begin{equation*}
B=\{k,k_1\neq0:|\tau_1-k_1^3|\geq\frac32k^2\}
\end{equation*}
\begin{equation*}
C=\{k,k_1\neq0:|\tau-\tau_1-(k-k_1)^3|\geq\frac32k^2\}.
\end{equation*}

Due to \eqref{k2} we can thus estimate, since we are summing positive terms,
\begin{equation*}
\sum_{k,k_1\neq0}\leq\sum_{k,k_1\in A}+\sum_{k,k_1\in B}+\sum_{k,k_1\in C}.
\end{equation*}
and analyze the three sums one by one. We limit ourselves to consider the terms with $c_0d_s$, the other one been analogous by symmetry.

\textbf{Set A}. 
In this case we have that since
\begin{equation}\label{prima}
\sum_{k_1\neq0}\int d\tau_1\frac{c_0(k_1,\tau_1)d_s(k-k_1,\tau-\tau_1)}
{(1+|\tau_1-k_1^3|)^{1/2}(1+|\tau-\tau_1-(k-k_1)^3)|^{1/2}}=\widehat{F\cdot G}(k,\tau)
\end{equation}
where we have set
\begin{equation}\label{F}
F(x,t)=\sum_m\int d\mu\left\{e^{i(mx+\mu t)}\frac{c_0(m,\mu)}{(1+|\mu-m^3|)^{1/2}}\right\}
\end{equation}
and
\begin{equation}\label{G}
G(x,t)=\sum_m\int d\mu\left\{e^{i(mx+\mu t)}\frac{d_s(m,\mu)}{(1+|\mu-m^3|)^{1/2}}\right\}
\end{equation}
(notice that $\|F\|_{L^2}=\|u\|_{X^{0,\frac12}}$ and  $\|G\|_{L^2}=\|v\|_{X^{s,\frac12}}$),
this first contribution to the left member of \eqref{ll1} is at most
\begin{equation}
\left(\sum_{k\in A}\int|\widehat{F\cdot G}(k,\tau)|d\tau\right)^{1/2}\leq\|F\cdot G\|_{L^2_{x,t}}
\lesssim\|F\|_{L^4_{x,t}}\|G\|_{L^4_{x,t}}. 
\end{equation}
Since $2/3<1$ estimate \eqref{cit} implies that
\begin{eqnarray}\label{ccc1}
\|F\|_{L^4}\|G\|_{L^4}&\lesssim&\left(\sum_m\int c_0(m,\mu)^2d\mu\right)^{1/2}
\left(\sum_m\int d_s(m,\mu)^2d\mu\right)^{1/2}\\
&\lesssim&\nonumber
\|u\|_{Y^0}\|v\|_{Y^s} \; .
\end{eqnarray}

\textbf{Set B}.
Analogously to the previous case, the contribution to the left member of \eqref{ll1} is thus given by
\begin{equation}\label{yeah}
\left\{\sum_{k\in B}\int d\tau\left(\frac1{(1+|\tau-k^3|)^{1/2}}\widehat{F\cdot H}(k,\tau)\right)^2\right\}^{1/2}
\end{equation}
where $F$ is given by \eqref{F} and 
\begin{equation}\label{H}
H(x,t)=\sum_m\int d\mu\big[e^{i(mx+\mu t)}d_s(m,\mu)\big].
\end{equation}
Same considerations as in the previous case and the application of Proposition \ref{bour} allow us to estimate \eqref{yeah} with $\|F\cdot H\|_{L^{4/3}_{x,t}}$. H\"older inequality and Proposition \ref{bour} eventually yields
\begin{equation}\label{ccc2}
\|F\cdot H\|_{L^{4/3}_{x,t}}\lesssim\|F\|_{L^4}\|H\|_{L^2}\lesssim\|u\|_{Y^0}\|v\|_{Y^s}.
\end{equation}

\textbf{Set C}.
Similar to set B.

This concludes the proof of Lemma \ref{1}.\end{proof}

\begin{proof}\textit{Lemma \ref{2}. }
We consider now the term
\begin{equation*}
\frac{|k|^s||\hat{w}(k,\tau)|}{1+|\tau-k^3|}
\end{equation*} 
which we can bound by
\begin{equation}\label{cccd}
\sum_{k_1}\int d\tau_1\frac{|k|^{s+1}|k_1|^{-s}|k-k_1|^{-s}c_s(k_1,\tau_1)d_s(k-k_1,\tau-\tau_1)}
{(1+|\tau-k^3|)(1+|\tau_1-k_1^3|)^{1/2}(1+|\tau-\tau_1-(k-k_1)^3)|^{1/2}}.
\end{equation}
that can be estimated using \eqref{trick} with
\begin{equation}\label{uud}
C_s \sum_{k_1}\int d\tau_1|k|\frac{c_0(k_1,\tau_1)d_s(k-k_1,\tau-\tau_1)+c_s(k_1,\tau_1)d_0(k-k_1,\tau-\tau_1)}
{(1+|\tau-k^3|)(1+|\tau_1-k_1^3|)^{1/2}(1+|\tau-\tau_1-(k-k_1)^3)|^{1/2}}.
\end{equation}
We separate again the indexes into the three sets $A$, $B$, and $C$ as before, and limit ourselves by symmetry  to the terms with $c_0d_s$.

\textbf{Set A}.
In this case, we use a duality argument. In the set A, we can bound $\frac1{1+|\tau-k^3|}$ by $\frac{5}{3(k^2+|\tau-k^3|)}$. Consider a sequence $\{a_n\}$ such that
$$
a_n\geq0,\quad \sum_na_n^2=1,
$$
then by duality and \eqref{prima}-\eqref{G} we can estimates the left member of \eqref{ll2} by considering the scalar product in $L^2_\tau, l^2_k$
\begin{equation}\label{prod}
\sum_n\int d\tau \left(\frac{a_n|n|}{n^2+|\tau-n^3|}\widehat{F\cdot G}\right).
\end{equation}
Setting now 
\begin{equation*}
P(x,t)=\sum_n\int d\tau e^{i(nx+\tau t)}\frac{a_n|n|}{n^2+|\tau-n^3|}
\end{equation*}
whose $L^2$ norm is given by
\begin{equation*}
\|P\|_{L^2}\cong\left(\sum_n\int d\tau\frac{a_n^2|n|^2}{(n^2+|\tau|)^2}\right)^{1/2}
\end{equation*}
allows us to rewrite \eqref{prod}
\begin{equation}\label{ccc3}
\langle P,F\cdot G\rangle \leq\|P\|_{L^2}\|F\cdot G\|_{L^2}\leq\|u\|_{Y^0}\|v\|_{Y^s}.
\end{equation}

\textbf{Set B}.
We thus consider the quantity
\begin{equation*}
 \sum_{k_1\in B}\int d\tau_1\frac{c_0(k_1,\tau_1)d_s(k-k_1,\tau-\tau_1)}
{(1+|\tau-k^3|)(1+|\tau-\tau_1-(k-k_1)^3)|^{1/2}}.
\end{equation*}
Fix now $1/3<\rho<1/2$ and write $(1+|\tau-k^3|)^{-1}=(1+|\tau-k^3|)^{-(1-\rho)}(1+|\tau-k^3|)^{-\rho}$. We thus can rewrite the left hand side of \eqref{ll2} as
\begin{equation}\label{bub}
\left\{\sum_{k\neq0}\left(\int d\tau\frac1{(1+|\tau-k^3|)^{1-\rho}}\sum_{k_1}\int d\tau_1\frac{c_0(k_1,\tau_1)d_s(k-k_1,\tau-\tau_1)}
{(1+|\tau-k^3|)^\rho(1+|\tau-\tau_1-(k-k_1)^3)|^{1/2}}\right)^2\right\}^{1/2}.
\end{equation}
We use Cauchy-Schwartz inequality for the $\tau$-integral to estimate \eqref{bub} by 
\begin{equation}\label{so}
\left\{ \sum_{k\neq0}\int d\tau\left(\sum_{k_1}\int d\tau_1\frac{c_0(k_1,\tau_1)d_s(k-k_1,\tau-\tau_1)}{(1+|\tau-k^3|)^\rho(1+|\tau-\tau_1-(k-k_1)^3|)^{1/2}}\right)^2\right\}^{1/2}
\end{equation}
where we have used the fact that $\int(1+|\tau-k^3|^{-(2-2\rho)}) d\tau$ is finite since $\rho<1/2$.

We now rewrite \eqref{so} as
\begin{equation*}
\left\{\sum_{k\in B}\int d\tau\left(\frac1{(1+|\tau-k^3|)^{\rho}}\widehat{F\cdot H}(k,\tau)\right)^2\right\}^{1/2}
\end{equation*}
with $F$ and $H$ given by \eqref{F} and \eqref{H}. To conclude we proceed as for the set $B$ in the proof of Lemma \ref{1}; the only difference we have to care of is the power $\rho$ instead of $1/2$ for $1+|\tau-k^3|$ but every $\rho>1/3$ works.

\textbf{Set C}.
Similar to set $B$.

This concludes the proof of \eqref{bil}.

We finally turn to the proof of \eqref{bil2}.
We start by noticing that, given the definition of $F$ and $G$, that we can find in \eqref{F}-\eqref{G}, and thanks to estimate \eqref{cit}, we have

\begin{equation}\label{yu}
\|F\|_{L^4}\leq\left(\sum_m\int d\mu\frac{c_s(m,\mu)^2}{(1+|\mu-m^3|)^{1/3}}\right)^{1/2}
\end{equation}
and an analogous one for $\|G\|_{L^4}$ (notice that now we are replacing $u$ in definition \eqref{c1} with $\eta_Tu$). 
Rewriting the righthandside of \eqref{yu} as
\begin{equation*}
\left[\sum_m\int d\mu\left((1+|\mu-m^3|)^{1/2}|m|^s|\widehat{\eta_Tu}(m,\mu)|)\right)^{4/3}
\left(|m|^s|\widehat{\eta_Tu}(m,\mu)|)\right)^{2/3}\right]^{1/2}
\end{equation*}
and applying H\"older inequality with exponents $3/2$ and $3$ yields
\begin{equation}\label{oss}
\|F\|_{L^4}\leq\left(\sum_m\int d\mu \:c_s(m,\mu)^2\right)^{1/3}\||\partial_x|^s(\eta_Tu)\|_{L^2}^{1/3}
\end{equation}
where $|\partial_x|^s$ denotes the Fourier multiplier $\hat u(k)\mapsto |k|^s \hat u(k)$.
Since by H\"older inequality we can estimate $\||\partial_x|^s(\eta_Tu)\|_{L^2}\leq\|\eta_T\|_{L^4}\||\partial_x|^su\|_{L^4}$ and we can assume without loss of generality that $\|\eta_T\|_{L^p}\sim T^{1/p}$, we can continue estimating \eqref{oss} by
\begin{equation*}
\|u\|^{2/3}_{X^{s,\frac12}}T^{1/12}\|\partial_x^su\|_{L^4}.
\end{equation*}
We can now apply Proposition \ref{bour} to conclude that
\begin{eqnarray}\label{ecco}
\|F\|_{L^4}&\leq& T^{1/12}\|u\|^{2/3}_{X^{s,\frac12}}\left[\sum_n\int (1+|\tau-k^3|)^{2/3}|k|^{2s}|\hat{u}(k,\tau)|^2d\tau|\right]^{1/6}
\\\nonumber
&\leq&T^{1/12}\|u\|_{X^{s,\frac12}}
\end{eqnarray} 

Notice now that in the proof of \eqref{bil}, we bounded the non linearity $w$ by a product of either $\|F\|_{L^4}$, $\|G\|_{L^4}$, hence we can get an estimate on $w$ smaller than $T^{1/12}$

\end{proof}

We have now all the tools we need in order to prove LWP for problem \eqref{pb}.

\begin{proposition}\label{lwp}
For any $s\geq0$ the initial value problem \eqref{pb} is locally well-posed in \\
$\mathcal C([-T,T],H^s)\cap Y^s$, meaning that for any initial data $u_0\in H^s$ there exists $T=T(\|u_0\|_{L^2})>0$ such that there exists a unique solution
\begin{equation*}
u\in C([-T,T],H^s(\mathbb{T}))\cap Y^s
\end{equation*}
with
\begin{equation*}
\|1_{t \in [-T,T]} u\|_{Y^s}\leq C\|u_0\|_{H^s}.
\end{equation*}
\end{proposition}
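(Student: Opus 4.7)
The plan is a contraction mapping argument in the Bourgain space $Y^s$, exploiting the $T^{1/12}$ gain in the bilinear estimate \eqref{bil2} to make the time of existence depend only on $\|u_0\|_{L^2}$. First, I would define the Picard map
$$
\Phi(u)(t) = \eta(t)S(t)u_0 - \tfrac{1}{2}\eta(t)\int_0^t S(t-t')\,\eta_T(t')\partial_x(u^2)(t')\,dt'.
$$
Any fixed point $u$ of $\Phi$ agrees with the integral formulation \eqref{intform} on $[-T,T]$, since $\eta\equiv 1$ and $\eta_T\equiv 1$ there. Combining the linear estimates \eqref{lin1}--\eqref{lin2} (applicable because $\eta_T\partial_x(u^2)$ is supported in $[-3,3]$ for $T\le 1$) with the bilinear estimate \eqref{bil2} yields the two key inequalities
$$
\|\Phi(u)\|_{Y^s} \le C_1\|u_0\|_{H^s} + 2C_2C_4\,T^{1/12}\|u\|_{Y^s}\|u\|_{Y^0},
$$
$$
\|\Phi(u)-\Phi(v)\|_{Y^s} \le 2C_2C_4\,T^{1/12}\bigl(\|u-v\|_{Y^s}(\|u\|_{Y^0}+\|v\|_{Y^0}) + \|u-v\|_{Y^0}(\|u\|_{Y^s}+\|v\|_{Y^s})\bigr),
$$
using $u^2-v^2=(u-v)(u+v)$ in the second one.

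I would first solve the equation in $Y^0$ alone. Specializing $s=0$ and working in the ball $B_R=\{u:\|u\|_{Y^0}\le R\}$ with $R=2C_1\|u_0\|_{L^2}$, the single smallness condition $2C_2C_4\,T^{1/12}R\le 1/2$ translates to
$$
T \le T_0 := c\,\|u_0\|_{L^2}^{-12}
$$
for a universal $c>0$, and under this condition $\Phi$ both stabilizes $B_R$ and acts as a $\tfrac12$-contraction on it. The Banach fixed point theorem then produces a unique solution $u\in B_R$ on $[-T_0,T_0]$, with a time of existence depending only on $\|u_0\|_{L^2}$ as required.

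To upgrade to $s>0$ without shrinking $T_0$, I would apply the $Y^s$ estimate to the already-constructed fixed point: since $\|u\|_{Y^0}\le R$ and $T\le T_0$,
$$
\|u\|_{Y^s} = \|\Phi(u)\|_{Y^s} \le C_1\|u_0\|_{H^s} + 2C_2C_4\,T_0^{1/12}R\,\|u\|_{Y^s} \le C_1\|u_0\|_{H^s} + \tfrac12\|u\|_{Y^s},
$$
so $\|u\|_{Y^s}\le 2C_1\|u_0\|_{H^s}$. The structural reason this bootstrap works at a fixed $T_0$ is precisely that the bilinear estimate \eqref{bil2} is linear in the high-regularity norm and linear in the low-regularity norm; raising $s$ costs nothing on the contraction side, and the smallness parameter remains $T^{1/12}\|u\|_{Y^0}$.

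Finally, continuity in time with values in $H^s$ follows from the $\ell^2_k L^1_\tau$ component of the $Y^s$ norm: finiteness of $\bigl\||k|^s\hat u(k,\tau)\bigr\|_{\ell^2_kL^1_\tau}$ means that for each $k$ the map $t\mapsto \hat u(k,t)$ is given by the Fourier transform of an $L^1_\tau$ function, hence continuous, and dominated convergence gives continuity into $H^s$. Uniqueness on $[-T_0,T_0]$ in $\mathcal{C}([-T_0,T_0],H^s)\cap Y^s$ follows from the contraction inequality applied to the difference of any two candidate solutions, possibly after further reducing $T$ on an a~priori basis and then bootstrapping. The main obstacle in the whole program is conceptually this: arranging that the time of existence depends only on $\|u_0\|_{L^2}$ and not on the full $H^s$ norm, and this is exactly what the asymmetric form of \eqref{bil2} is designed to deliver.
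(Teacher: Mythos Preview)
Your approach is essentially the paper's: a contraction in $Y^0$ using the linear estimates \eqref{lin1}--\eqref{lin2} together with the $T^{1/12}$-bilinear estimate \eqref{bil2}, so that the lifespan depends only on $\|u_0\|_{L^2}$. There is, however, one genuine gap in your bootstrap to $Y^s$. When you write
\[
\|u\|_{Y^s}=\|\Phi(u)\|_{Y^s}\le C_1\|u_0\|_{H^s}+\tfrac12\|u\|_{Y^s}
\]
and absorb, you are tacitly assuming $\|u\|_{Y^s}<\infty$; but your fixed point $u$ has only been constructed in $Y^0$, so both sides could be infinite and nothing follows. The paper avoids this by running the contraction (still in the $Y^0$ topology) on the set
\[
K=\{u:\ \|u\|_{Y^0}\le C_1\|u_0\|_{L^2},\ \|u\|_{Y^s}\le C_1\|u_0\|_{H^s}\}
\]
and proving, via a duality description of the $Y^s$ norm, that $K$ is closed in $Y^0$; the fixed point then automatically lies in $K$. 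Your argument is easily repaired with the same ingredient: the Picard iterates $u^{(n)}=\Phi^n(0)$ satisfy $\|u^{(n)}\|_{Y^s}\le 2C_1\|u_0\|_{H^s}$ by induction, they converge to $u$ in $Y^0$, and closedness of $Y^s$-balls in $Y^0$ transfers the bound to $u$.

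A secondary point: your uniqueness sketch is a bit thin. Uniqueness is claimed in $\mathcal{C}([-T,T],H^s)\cap Y^s$, not merely among fixed points of the cut-off map $\Phi$, and a solution of \eqref{pb} on $[-T,T]$ is not a priori a fixed point of $\Phi$ on all of $\R$. The paper handles this by explicitly extending any given solution on $[-T,T]$ to a fixed point of the cut-off map (the operator $\tilde\Gamma_u$), so that uniqueness of the fixed point implies uniqueness of the solution; you should indicate an analogous step.
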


\begin{remark} The $L^\infty([-T,T],H^s(\T))$ norm is controlled by the $Y^s$ norm.\end{remark}

\begin{proof}
Fix $u_0\in H^s$, $s\geq0$, and consider the map
\begin{equation}\label{map}
\Gamma(u)(t,x)=\eta_T(t)S(t)u_0-\eta_T(t)\int_0^tS(t-t')\left(\eta_T(t')\frac12\partial_x(u^2)(t')\right)dt'.
\end{equation}
(notice that choosing $T<1$ yields $\eta_T(t)\leq\eta(t)$ and so estimates \eqref{lin1} and \eqref{lin2} apply).

First of all, notice that if $u$ is a fixed point of $\Gamma$ with compact support in $[-2T,2T]$, then its restriction to $[-T,T]$ is a (local) solution of KdV.
We thus want to show that the map $\Gamma$ is a contraction on the space
\begin{equation}\label{contmap}
K=\left\{u\in Y^s\cap Y^0:\|u\|_{ Y^0}\leq C_1 \|u_0\|_{L^2} \mbox{ and } \|u\|_{Y^s}\leq C_1\|u_0\|_{H^s} \right\}
\end{equation}
with constant $C_1$ big enough. The topology we use for the contraction argument is $Y^0$.

First, we can see that the ball of radius $R'$ in $Y^s$ is closed in $Y^0$ as we can describe the two norms involved in $Y^s$ in the following way. We have
$$
\|u\|_{X^{s,1/2}} = \sup \left\{ \sum_k \int d\tau g(k,\tau) \hat u(k,\tau) \Big| \||k|^{-s}\langle \tau - k^3\rangle^{-1/2} g(k,\tau)\|_{l^2_k,L^1_\tau} \leq 1 \right\}
$$
and
$$
\||k|^s\hat u(k,\tau)\|_{l^2_k,L^1_\tau} = \sup  \left\{ \sum_k \int d\tau g(k,\tau) \hat u(k,\tau) \Big| \||k|^{-s} g(k,\tau)\|_{l^2_k,L^\infty_\tau} \leq 1 \right\} \; .
$$
Perform the usual argument to get that the balls of $Y^s$ are closed in $Y^0$. Therefore $K$ is closed in $Y^0$.

By Propositions \ref{linp} and \ref{bilp} we have for all $s\geq0$
\begin{equation*}
\|\Gamma(u)\|_{Y^s}\leq C\|u_0\|_{H^s}+CT^\frac1{12}\|u\|_{Y^s}\|u\|_{Y^0} 
\end{equation*}
hence using it with $s=0$ we have
$$
\|\Gamma(u)\|_{Y^0}\leq C\|u_0\|_{L^2}+CT^\frac1{12}C_1^2 \|u_0\|_{L^2}^2
$$
and with general $s\geq 0$
$$
\|\Gamma(u)\|_{Y^s}\leq C\|u_0\|_{H^s}+CT^\frac1{12}C_1^2\|u_0\|_{L^2}\|u_0\|_{H^s} 
$$
so that for times of order $\|u_0\|_{L^2}^{-12}$, $\Gamma$ maps $K$ to $K$. We now prove that $\Gamma$ is a contraction on $K$, meaning that there exists $\theta\in (0,1)$ such that
\begin{equation*}
\|\Gamma(u)-\Gamma(v)\|_{Y^0}\leq\theta\|u-v\|_{Y^0}
\end{equation*}
for all $u,v\in K$. Since $u^2-v^2=(u+v)(u-v)$, the application of Propositions \ref{linp} and \ref{bilp} yields again, for every $s\geq0$,
\begin{equation*}
\|\Gamma(u)-\Gamma(v)\|_{Y^0}\leq cT^{\frac1{12}}\Big[\|u-v\|_{Y^0}(\|u\|_{Y^0}+\|v\|_{Y^0})\Big].
\end{equation*}
Choosing $T\leq C \|u_0\|_{L^2}^{-12}$ with $C$ small enough, since in $K$, $(\|u\|_{Y^0}+\|v\|_{Y^0}) \leq 2C_1 \|u_0\|_{L^2}$, yields
\begin{equation*}
\|\Gamma(u)-\Gamma(v)\|_{Y^s}\leq \theta \|u-v\|_{Y^s} \; .
\end{equation*}

To prove the uniqueness, we proceed as follows. Let $u$ be a solution of KdV on the time interval $[-T,T]$. Then we can use the map $\tilde \Gamma_u$ from $Y^s$ to itself
defined as
$$
\tilde{\Gamma}_u(\tilde{u})(t,x) = u(t,x)
$$
if $t \in [-T,T]$ and
\begin{eqnarray*}
\tilde{\Gamma}_u(\tilde{u})(T+t,x) &=& \eta_T(T+t)S(T+t)u_0-\eta_T\int_0^TS(T+t-t')(\eta_T(t')\frac12\partial_x(u)^2)dt'\\
& & -\eta_T(T+t)\int_T^{T+t}S(T+t-t')\frac12\partial_x(\tilde{u}^2)dt'
\end{eqnarray*}
and the analogous with $-T$ to extend $u$ into $\tilde u$ a fixed point of $\Gamma$. The uniqueness of the the fixed point of $\Gamma$ thus yields the uniqueness of the solution.

Besides, we have that if $u'$ is the fixed point of $\Gamma$ and $u$ the solution,
$$
\|1_{t \in [-T,T]}u\|_{Y^s} \leq \|u'\|_{Y^s} \lesssim \|u_0\|_{H^s}\; .
$$

\end{proof}

\begin{remark}
Notice that since the lifespan $T$ of the solution only depends on the $L^2$ norm of the initial datum and since the $L^2$ norm is conserved under the KdV flow, such a solution is global.
\end{remark}

\subsection{Approximation of KdV and local uniform convergence}

To conclude this section we prove a result of uniform convergence of the solutions of a projected KdV problem to the solution of \eqref{pb}, which will be useful in the proof of the invariance of the measure built in Section 4.

Let us denote by $E_N$, $N\geq1$, the vector spaces given by
\begin{equation*}
E_N={\rm Span}\left\{c_n,s_n| 1\leq n\leq N\right\}
\end{equation*}
where
\begin{equation*}
c_n(x)=\frac{\cos(nx)}{\sqrt{\pi}},\qquad s_n(x)=\frac{\sin(nx)}{\sqrt{\pi}},
\end{equation*}
$n\geq 1$, is the usual orthonormal basis of $L^2$ functions with $0$ mean value, and let $\Pi_N$ be the orthogonal projector on $E_N$. Let us moreover consider the projected KdV
\begin{equation}
\begin{cases}
\partial_tu+\partial_x^3u+\Pi_N\left(\frac12\partial_x\left(\Pi_Nu\right)^2\right)=0, \quad x\in\T,N\geq 1\\
u(x,0)=u_0(x)
\end{cases}
\end{equation}
and denote with $\Psi_N(t)u_0$ the corresponding solution. Note that the equation can be divided into an ODE on $E_N$ : 
$$
\partial_tu+\partial_x^3u+\Pi_N\left(\frac12\partial_x u^2\right)=0
$$
with initial datum $\Pi_N u $, whose non linearity is Lipschitz and with a $L^2$ invariance, hence it is globally well-posed; and a linear equation on the orthogonal of $E_N$ : 
$$
\partial_tu+\partial_x^3u=0
$$
with initial datum $(1-\Pi_N)u$, also globally well-posed. The same methods as in the local well-posedness of KdV gives the same bounds on $\Psi(t) u_0$ and $\Psi_N(t) u_0$. Hence, we have
$$
\|\Psi_N(t)u_0 \|_{Y^s} \leq C \|u_0\|_{H^s}
$$
where we cut off the times outside $[-T,T]$, where $T$ is of order $\|u_0\|_{L^2}^{-12}$ (notice that the constants do not depend on $N$).

Then we can prove the following proposition, that we will refer in the sequel local uniform convergence.

\begin{proposition}\label{luc}
Let $R' \geq 0$, $\sigma > s\geq 0$. There exists a time $T>0$ such that for every $R,\varepsilon>0$, there exists $N_\varepsilon$ such that for every $N>N_\varepsilon$, every $t\in[-T,T]$ and every $u_0$ such that $\|u_0\|_{H^\sigma}\leq R$,
 and $\|u_0\|_{L^2}\leq R'$,\begin{equation*}
\|\Psi_N(t)u_0-\Psi(t)u_0\|_{H^s}\leq \varepsilon.
\end{equation*}
\end{proposition}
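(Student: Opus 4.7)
The plan is to derive a Duhamel equation for the difference $w=\Psi(t)u_0-\Psi_N(t)u_0$, control its $Y^s$ norm via the bilinear estimate \eqref{bil2}, and exploit the smoothing factor $N^{s-\sigma}$ produced by the high-frequency projector $(1-\Pi_N)$. I would fix $T\sim (1+R')^{-12}$ so that Proposition \ref{lwp} and its analogue for $\Psi_N$ give uniform bounds $\|u\|_{Y^\sigma},\|u_N\|_{Y^\sigma}\lesssim R$ and $\|u\|_{Y^0},\|u_N\|_{Y^0}\lesssim R'$ on $[-T,T]$, where $u=\Psi(t)u_0$ and $u_N=\Psi_N(t)u_0$. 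Subtracting the two Duhamel formulas and using $u-\Pi_N u_N = w+(1-\Pi_N)u_N$ to write
\begin{equation*}
u^2-\Pi_N(\Pi_N u_N)^2 = (1-\Pi_N)u^2 + \Pi_N\!\left[(1-\Pi_N)u_N\,(u+\Pi_N u_N)\right]+\Pi_N\!\left[w\,(u+\Pi_N u_N)\right],
\end{equation*}
I would then, as in the proof of Proposition \ref{lwp}, insert the time cutoff $\eta_T$ (and a fixed-point extension for the $w$-dependent term) so that the linear and bilinear estimates of Propositions \ref{linp} and \ref{bilp} apply directly.

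Applying these estimates term by term yields, for $r\in\{0,s\}$,
\begin{equation*}
\|w\|_{Y^r}\leq CT^{1/12}\!\left[N^{r-\sigma}R(R+R') + (\|u\|_{Y^r}+\|u_N\|_{Y^r})\|w\|_{Y^0} + (\|u\|_{Y^0}+\|u_N\|_{Y^0})\|w\|_{Y^r}\right].
\end{equation*}
The key factor $N^{r-\sigma}$ in front of the two $(1-\Pi_N)$-terms comes from the pointwise bound $|k|^r\leq N^{r-\sigma}|k|^\sigma$ valid on the Fourier support of $(1-\Pi_N)g$, which gives $\|(1-\Pi_N)g\|_{Z^r}\leq N^{r-\sigma}\|g\|_{Z^\sigma}$, combined with \eqref{bil2} applied at level $\sigma$. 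I would then close the estimate in two stages: taking $r=0$ and shrinking $T$ (in terms of $R'$ only) so that $CT^{1/12}(\|u\|_{Y^0}+\|u_N\|_{Y^0})\leq \tfrac12$ lets us absorb the last term, producing $\|w\|_{Y^0}\lesssim N^{-\sigma}R(R+R')\to 0$; then taking $r=s$, the same absorption together with the previously obtained smallness of $\|w\|_{Y^0}$ gives $\|w\|_{Y^s}\lesssim N^{s-\sigma}R(R+R')(1+R)\to 0$ since $\sigma>s$. The embedding $Y^s\hookrightarrow L^\infty_tH^s_x$ then yields the proposition.

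The main obstacle is to select $T$ depending only on $R'$ and not on $R$ or $N$. This is precisely what the two-stage bootstrap accomplishes: the coefficient $CT^{1/12}(\|u\|_{Y^0}+\|u_N\|_{Y^0})$ that is absorbed involves only the $L^2$ bound, while the $R$-dependent cross-term $(\|u\|_{Y^s}+\|u_N\|_{Y^s})\|w\|_{Y^0}$ is rendered harmless not by smallness of $T$ but by the prior smallness of $\|w\|_{Y^0}$ established at the first stage. The gain $T^{1/12}$ supplied by \eqref{bil2} is essential so that both absorptions can be performed simultaneously with a $T$ independent of $R$ and $N$.
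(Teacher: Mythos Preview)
Your proposal is correct and follows essentially the same approach as the paper: decompose the difference of the nonlinearities into $(1-\Pi_N)$-terms (yielding the $N^{r-\sigma}$ gain via the elementary Fourier-side inequality) and a $w$-term, apply the linear and bilinear estimates with the $T^{1/12}$ factor, and close by a two-stage bootstrap first in $Y^0$ (absorbing only the $R'$-dependent coefficient, so that $T$ depends on $R'$ alone) and then in $Y^s$ using the already established smallness of $\|w\|_{Y^0}$. The only cosmetic difference is that the paper splits off $(1-\Pi_N)\partial_x(\Pi_N u_N)^2$ rather than $(1-\Pi_N)\partial_x u^2$, but the resulting estimates and the logic are identical.
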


\begin{proof}
We use the notation $u_N = \Psi_N(t) u_0$.  We thus can write
$$
u_N = S(t) u_0 - \frac12 \int_{0}^t S(t-t')\Pi_N \partial_x ((\Pi_N u_N(t'))^2) dt' \; .
$$
We rely on the facts that the $H^s$ norm of $\| u(t) -u_N(t)\|_{H^s}$ for $t\in[-T,T]$ is bounded by the $Y^s$ norm of $\eta_T (t) (u-u_N)$ and that the $Y^s$ norms of $u$ and $u_N$ are bounded by the $H^s$ norm of $u_0$ if we cut off the times outside $[-T,T]$.

From this, we get the bound
\begin{equation*}
\|\Psi_N(t)u_0-\Psi(t)u_0\|_{Y^s}\leq\left\|\frac12\int S(t-t')\left(\Pi_N\partial_x\left(\Pi_Nu_N\right)^2-\partial_xu^2\right)(t')dt'\right\|_{Y^s}.
\end{equation*}
We will estimate this term dividing it into the sum
\begin{equation*}
\left\|\frac12\int S(t-t')\left((1-\Pi_N)\partial_x\left(\Pi_Nu_N\right)^2\right)(t')dt'\right\|_{Y^s}+
\left\|\int S(t-t')\left(\partial_x\left((\Pi_Nu_N)^2-u^2\right)\right)(t')dt'\right\|_{Y^s}
\end{equation*}
\begin{equation*}
=I+II.
\end{equation*}
We need the following property that holds for every $N\geq1$ and every $\sigma>s$:
\begin{equation}\label{proj}
\|\left(1-\Pi_N\right)F\|_{Y^s}\leq N^{s-\sigma}\|F\|_{Y^\sigma}.
\end{equation}
This can be proved using the same argument as in the Sobolev spaces, for every $\sigma>s$,
\begin{equation*}
\sum_{|k|>N}|k|^{2s}|f|^2=
\sum_{|k|>N}|k|^{2\sigma}|k|^{2(s-\sigma)}|f_k|^2\leq N^{2(s-\sigma)}\sum_k|k|^{2\sigma}|f_k|^2.
\end{equation*} 
Since $\Pi_N$ commutes with $S(t)$ we thus have
\begin{equation*}
I\leq\frac12N^{s-\sigma}\left\|\int S(t-t')\partial_x(\Pi_Nu_N)^2(t')dt'\right\|_{Y^\sigma}.
\end{equation*}
We now exploit our linear and bilinear estimates in the form
\begin{equation*}
\left\|\eta_T(t)\int_0^tS(t-t')\partial_x(u^2)(t')dt'\right\|_{Y^\sigma}\leq C T^{1/{12}}\|u\|_{Y^0}\|u\|_{Y^\sigma}
\end{equation*} 
to have
\begin{equation*}
N^{s-\sigma}\left\|\int S(t-t')\partial_x(\Pi_Nu_N)^2(t')dt'\right\|_{Y^\sigma}\leq
T^{1/12}N^{s-\sigma}\left\|\Pi_Nu_N\right\|_{Y^\sigma}\left\|\Pi_Nu_N\right\|_{Y^0}
\end{equation*}
and then we apply that $\|\Pi_N u_N\|_{Y^s}$ is less than $\|u_N\|_{Y^s}$ which is bounded by the $H^s$ norm of $u_0$. Therefore,
$$
I \leq T^{1/12} N^{s-\sigma}RR'\; .
$$
Analogously, for $II$ we have
\begin{equation*}
II\lesssim T^{1/12}\|\Pi_Nu_N-u\|_{Y^s}\big(\|\Pi_Nu_N\|_{Y^0}+\|u\|_{Y^0}\big) + T^{1/12}\|\Pi_Nu_N-u\|_{Y^0}\big(\|\Pi_Nu_N\|_{Y^s}+\|u\|_{Y^s}\big).
\end{equation*}
Writing now
\begin{equation*}
\|\Pi_Nu_N-u\|_{Y^s}\leq\|\Pi_Nu_N-u_N\|_{Y^s}+\|u_N-u\|_{Y^s}
\end{equation*}
and applying \eqref{proj} yields (since $s<\sigma$)
\begin{equation*}
II\lesssim T^{1/12}R
\left(N^{s-\sigma}R'+\|u_N-u\|_{Y^s} + R' \|\Pi_Nu_N-u\|_{Y^0}\right)\; .
\end{equation*}
Taking $s = 0$, $R = R'$ and $T$ of order $R^{-12}$ small enough, we get
$$
\|u-u_N\|_{Y^0} \leq \frac{1}{2} \|u-u_N\|_{Y^0} + CR N^{-\sigma}
$$
hence the local uniform convergence holds in $Y^0$. Knowing that we input this information into the estimate in $Y^s$, that is, for $T$ of order $R^{-12}$, we have
$$
\|u-u_N\|_{Y^s} \leq \frac12 \|u-u_N\|_{Y^s} + CRR' N^{s-\sigma}  + CR' \|u-u_N\|_{Y^0}
$$
and then we get the local uniform convergence in $Y^s$.
\end{proof}

\begin{remark}Note that the time of local uniform convergence depends only on the $L^2$ norm of the initial datum, whereas the rate of convergence depends on both the $L^2$ and $H^\sigma$ norm of the initial datum.\end{remark}

\section{Continuity of the flow}\label{sec-contflow}
 
\subsection{Global estimates}

We prove here some global bounds on KdV iterating the estimates provided by the local well-posedness proposition.

\begin{lemma}There exists $C$ such that for all times $t\in \R$, all $u_0$ and $v_0$ in $H^s$, we have the estimates 
$$
\|\Psi(t)u_0 - \Psi(t)v_0\|_{H^s}\leq C \left( \|u_0-v_0\|_{H^s} + (\|u_0\|_{H^s} + \|v_0\|_{H^s}) \|u_0-v_0\|_{L^2}\right) e^{c|t| (\|u_0\|_{L^2}+\|v_0\|_{L^2})^{12}}\; ,
$$
$$
\|\Psi(t)u_0 - \Psi(t)v_0\|_{H^s}\leq C  \|u_0-v_0\|_{L^2} e^{c|t| (\|u_0\|_{L^2}+\|v_0\|_{L^2})^{12}}\; .
$$
\end{lemma}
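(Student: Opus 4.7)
Set $u(t)=\Psi(t)u_0$, $v(t)=\Psi(t)v_0$, and $w=u-v$. Then $w$ solves the linear equation
$$\partial_t w+\partial_x^3 w + \tfrac12\partial_x\bigl((u+v)w\bigr)=0$$
with $w(0)=u_0-v_0$, so Duhamel gives
$$w(t)=S(t)(u_0-v_0)-\tfrac12\int_0^t S(t-t')\partial_x\bigl((u+v)w\bigr)(t')\,dt'.$$
The plan is to apply the linear estimate \eqref{lin2} and the bilinear estimate \eqref{bil2} of Propositions \ref{linp}--\ref{bilp} to this formula on short time windows, and then to iterate.

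First I would work locally on a window $[-T,T]$. Applying \eqref{bil2} to the product $(u+v)w$, using the LWP bound $\|u\|_{Y^s}+\|v\|_{Y^s}\lesssim \|u_0\|_{H^s}+\|v_0\|_{H^s}$ on this window, yields
$$\|w\|_{Y^s}\leq C\|u_0-v_0\|_{H^s}+CT^{1/12}\Bigl[(\|u_0\|_{L^2}+\|v_0\|_{L^2})\|w\|_{Y^s}+(\|u_0\|_{H^s}+\|v_0\|_{H^s})\|w\|_{Y^0}\Bigr].$$
Choosing $T$ of order $(\|u_0\|_{L^2}+\|v_0\|_{L^2})^{-12}$ makes the coefficient $CT^{1/12}(\|u_0\|_{L^2}+\|v_0\|_{L^2})$ less than $\tfrac12$, so that term is absorbed on the left. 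Specializing first to $s=0$ gives $\|w\|_{Y^0}\leq 2C\|u_0-v_0\|_{L^2}$ on $[-T,T]$; feeding this back into the general-$s$ inequality produces
$$\|w\|_{Y^s}\leq 2C\|u_0-v_0\|_{H^s}+C'(\|u_0\|_{H^s}+\|v_0\|_{H^s})\|u_0-v_0\|_{L^2}\quad\text{on }[-T,T],$$
and both inequalities at the level of $H^s$ norms via the control $L^\infty_t H^s\hookleftarrow Y^s$.

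Next I would iterate on successive windows of the fixed length $T$. Because the $L^2$-norm is conserved by $\Psi$, the local time $T$ does not shrink along the orbit, so the number of steps needed to reach time $t$ is $N\sim |t|(\|u_0\|_{L^2}+\|v_0\|_{L^2})^{12}$. At each step, iterating the LWP estimate at level $H^s$ multiplies $\|u(kT)\|_{H^s}$ by a bounded constant, and the per-window inequality above becomes the coupled linear recursion
$$a_{k+1}\leq A\,a_k+B\,b_k\,c_k,\qquad b_{k+1}\leq A\,b_k,\qquad c_{k+1}\leq A\,c_k,$$
with $a_k=\|w(kT)\|_{H^s}$, $b_k=\|u(kT)\|_{H^s}+\|v(kT)\|_{H^s}$, $c_k=\|w(kT)\|_{L^2}$. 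Solving this recursion explicitly gives $a_N\lesssim A^{2N}(a_0+b_0 c_0)$, and the factor $A^{2N}$ collapses exactly into the announced prefactor $e^{c|t|(\|u_0\|_{L^2}+\|v_0\|_{L^2})^{12}}$. The second inequality of the lemma (which, to make sense, must have $\|\Psi(t)u_0-\Psi(t)v_0\|_{L^2}$ on the left) is obtained by iterating the $s=0$ bound $\|w\|_{Y^0}\leq 2C\|u_0-v_0\|_{L^2}$ alone.

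The main obstacle will be the book-keeping at the absorption step: the whole argument hinges on the fact that the constant to be absorbed, $CT^{1/12}\|u+v\|_{Y^0}$, involves only the $L^2$-norm of the data, so that the window length $T$ is stable under iteration thanks to $L^2$ conservation. This is what decouples the growth of $\|u(kT)\|_{H^s}$ from the window size and allows it to enter only linearly, multiplied by $\|w\|_{L^2}$, in the recursion. Once this decoupling is arranged, the rest is a routine geometric estimate folding all accumulated constants into the single exponential $e^{c|t|(\|u_0\|_{L^2}+\|v_0\|_{L^2})^{12}}$.
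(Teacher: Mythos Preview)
Your proposal is correct and follows essentially the same approach as the paper: derive the local-in-time bounds from the bilinear estimate \eqref{bil2} on a window of length $T\sim(\|u_0\|_{L^2}+\|v_0\|_{L^2})^{-12}$, use $L^2$ conservation to keep $T$ fixed, and iterate the resulting linear recursion for $\|w(kT)\|_{H^s}$, $\|u(kT)\|_{H^s}+\|v(kT)\|_{H^s}$, and $\|w(kT)\|_{L^2}$ to produce the exponential factor. Your observation that the second displayed inequality should have the $L^2$ norm on the left is also correct; the paper's proof in fact only establishes the $L^2$ version.
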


\begin{proof} Let $T = \frac{1}{C(\|u_0\|_{L^2}+\|v_0\|_{L^2})^{12}}$ the time of local existence and uniqueness of the solutions. We have that, thanks to local well posedness in $Y^s$
$$
\|\Psi(T)u_0 \|_{H^s} \leq C\|u_0 \|_{H^s}\; .
$$
Let $T_n = nT$. Since the $L^2$ norm is invariant under the flow of KdV, we get by induction
$$
\|\Psi(T_n)u_0 \|_{H^s} \leq C^{|n|}\|u_0 \|_{H^s}\; .
$$
Then, we use that, thanks to the bilinear estimates, for local times,
$$
\|\Psi(T)u_0 - \Psi(T)v_0\|_{L^2} \leq C \|u_0 - v_0\|_{L^2}
$$
and
$$
\|\Psi(T)u_0 - \Psi(T)v_0\|_{H^s} \leq C T^{1/12}(\|u_0\|_{H^s} + \|v_0\|_{H^s}) \|u_0-v_0\|_{L^2} + C  \|u_0-v_0\|_{H^s} \; .
$$
Thanks to the conservation of the $L^2$ norm, we get by induction
$$
\|\Psi(T_n)u_0 - \Psi(T_n)v_0\|_{L^2} \leq C^{|n|} \|u_0 - v_0\|_{L^2}\; .
$$
Then, we rewrite, substituting $u_0$ by $\Psi(T_n)u_0$,
\begin{eqnarray*}
\|\Psi(T_{n+1})u_0 - \Psi(T_{n+1})v_0\|_{H^s} &\leq &C T^{1/12}(\|\Psi(T_n)u_0\|_{H^s} + \|\Psi(T_n)v_0\|_{H^s}) \|\Psi(T_n)u_0-\Psi(T_n)v_0\|_{L^2}\\
 & & + C  \|\Psi(T_n)u_0-\Psi(T_n)v_0\|_{H^s} \; .
 \end{eqnarray*}
We plug into this inequality the estimates on $\|\Psi(T_n)u_0\|_{H^s}$ and $\|\Psi(T_n)u_0 - \Psi(T_n)v_0\|_{L^2}$ to get
$$
\|\Psi(T_{n+1})u_0 - \Psi(T_{n+1})v_0\|_{H^s} \leq C T^{1/12}C^{2|n|}(\|u_0\|_{H^s} + \|v_0\|_{H^s}) \|u_0-v_0\|_{L^2} + C  \|\Psi(T_n)u_0-\Psi(T_n)v_0\|_{H^s}\; .
$$
With $D$ big enough, we get by induction
$$
\|\Psi(T_{n})u_0 - \Psi(T_{n})v_0\|_{H^s} \leq D^{|n|} \left( (\|u_0\|_{H^s} + \|v_0\|_{H^s}) \|u_0-v_0\|_{L^2} + \|u_0-v_0\|_{H^s}\right)\; .
$$
As $n$ is equal to $CT_n (\|u_0\|_{L^2} + \|v_0\|_{L^2})^{12}$, we get the result at the discrete times $T_n$, the local properties extend it to all times.
\end{proof}

\subsection{Continuity with regard to the Wasserstein metrics}

We now state the main result of this paper, proving local Lipschitz continuity of the KdV flow with respect to the Wasserstein metrics.

\begin{theorem}\label{th1}
If $\Psi(t)$ is the KdV flow on $L^2$ and if $\mu$ and $\nu$ belong to $M_{s,p}$ then $\mu^t$ and $\nu^t$ belong to $M_{s,p}$ and
\begin{equation}\label{thtes}
\|\mu^t-\nu^t\|_{s,p}\leq C(1+ R_2) e^{c|t|R_1}
\|\mu-\nu\|_{s,p}
\end{equation}
where $R_1=(\|u\|_{L^\infty_\mu,L^2}+\|v\|_{L^\infty_\nu,L^2})^{12}$ and $R_2=\|u\|_{L^p_\mu,H^s}+\|v\|_{L^p_\nu,H^s}$.

\end{theorem}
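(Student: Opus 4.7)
My approach is to translate the pointwise bilinear estimates from the preceding lemma into Wasserstein-type bounds by integrating against a coupling $\gamma\in\textrm{Marg}(\mu,\nu)$ and then choosing $\gamma$ judiciously. The $W_{0,\infty}$ part is the easy half: for any $\gamma$, the pushforward $(\Psi(t)\otimes\Psi(t))_*\gamma$ couples $\mu^t$ and $\nu^t$, and since $\mu,\nu\in M_{s,p}$ one has $\|u\|_{L^2},\|v\|_{L^2}\leq R_1^{1/12}$ $\gamma$-almost surely. Applying the $L^2$-version of the global estimate pointwise and taking the essential supremum therefore gives $W_{0,\infty}(\mu^t,\nu^t)\leq Ce^{c|t|R_1}\|\|u-v\|_{L^2}\|_{L^\infty_\gamma}$; passing to the infimum in $\gamma$ yields $W_{0,\infty}(\mu^t,\nu^t)\leq Ce^{c|t|R_1}W_{0,\infty}(\mu,\nu)$, and as a byproduct $\mu^t,\nu^t\in M_{s,p}$ (the $L^\infty L^2$ marginal is preserved by conservation of the $L^2$ norm, the $L^p H^s$ moment by applying the bilinear estimate with $v=0$).

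For $W_{s,p}$ I plan to apply the first (bilinear) estimate of the lemma pointwise along a coupling $\gamma$ and take the $L^p_\gamma$ norm, obtaining
\begin{equation*}
W_{s,p}(\mu^t,\nu^t)\leq Ce^{c|t|R_1}\left[\Bigl(\int\|u-v\|_{H^s}^p\,d\gamma\Bigr)^{1/p}+I(\gamma)\right],
\end{equation*}
with $I(\gamma)=\bigl(\int(\|u\|_{H^s}+\|v\|_{H^s})^p\|u-v\|_{L^2}^p\,d\gamma\bigr)^{1/p}$. The whole point of the ``mixed'' form of the bilinear estimate is that $I(\gamma)$ can be controlled by H\"older's inequality in the form $L^\infty_\gamma\times L^p_\gamma$: placing the $L^\infty_\gamma$ norm on $\|u-v\|_{L^2}$ gives $I(\gamma)\leq\|\|u-v\|_{L^2}\|_{L^\infty_\gamma}\cdot\bigl(\int(\|u\|_{H^s}+\|v\|_{H^s})^p\,d\gamma\bigr)^{1/p}\leq 2R_2\|\|u-v\|_{L^2}\|_{L^\infty_\gamma}$, where the last inequality uses the marginal identities $\int\|u\|_{H^s}^p\,d\gamma=\|u\|_{L^p_\mu,H^s}^p$ and similarly for $\nu$, together with $(a+b)^p\leq 2^{p-1}(a^p+b^p)$.

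To conclude, I would pick for every $\varepsilon>0$ a coupling $\gamma_\varepsilon$ that is simultaneously $\varepsilon$-optimal for both Wasserstein quantities, namely $(\int\|u-v\|_{H^s}^p\,d\gamma_\varepsilon)^{1/p}\leq W_{s,p}(\mu,\nu)+\varepsilon$ and $\|\|u-v\|_{L^2}\|_{L^\infty_{\gamma_\varepsilon}}\leq W_{0,\infty}(\mu,\nu)+\varepsilon$. Feeding $\gamma_\varepsilon$ into the previous display and sending $\varepsilon\to 0$ yields $W_{s,p}(\mu^t,\nu^t)\leq Ce^{c|t|R_1}[W_{s,p}(\mu,\nu)+2R_2 W_{0,\infty}(\mu,\nu)]\leq C(1+R_2)e^{c|t|R_1}\|\mu-\nu\|_{s,p}$, which when summed with the $W_{0,\infty}$ bound of the first paragraph gives the theorem. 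The hard part of this plan is the construction of $\gamma_\varepsilon$: each Wasserstein infimum is separately attained on the tight convex set $\textrm{Marg}(\mu,\nu)$ by lower semicontinuity of the respective cost, but the two optimizers can a priori differ, so one needs a dedicated argument---for instance a constrained optimization over $\{\gamma:\|u-v\|_{L^2}\leq W_{0,\infty}(\mu,\nu)+\varepsilon\text{ $\gamma$-a.s.}\}$, together with a comparison of the constrained and unconstrained $W_{s,p}$-infimums, or a disintegration/gluing of an optimal $W_{0,\infty}$-plan with an optimal $W_{s,p}$-plan---to produce a single coupling that nearly realizes both.
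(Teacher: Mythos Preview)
Your plan is exactly the paper's proof: push forward an arbitrary coupling $\gamma$ by $\Psi(t)\times\Psi(t)$, apply the two global estimates pointwise, take $L^\infty_\gamma$ respectively $L^p_\gamma$, and then pass to the infimum over $\gamma$. The paper handles the mixed term $I(\gamma)$ the same way you do, pulling $\|u-v\|_{L^2}$ out in $L^\infty_\gamma$ and using the marginal identities to recover $R_2$.

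The point you flag as ``the hard part''---producing a single coupling $\gamma_\varepsilon$ that is simultaneously near-optimal for $W_{0,\infty}$ and for $W_{s,p}$---is precisely the step the paper does \emph{not} address: it simply writes ``Taking the infimum over $\gamma\in\textrm{Marg}(\mu,\nu)$ yields $W_{s,p}(\mu^t,\nu^t)\le Ce^{c|t|R_1}(W_{0,\infty}(\mu,\nu)R_2+W_{s,p}(\mu,\nu))$'', splitting the infimum of a sum into the sum of the infima, which is the wrong inequality. So you are being more scrupulous than the source. Your proposed remedies (constrained minimisation over $\{\gamma:\|u-v\|_{L^2}\le W_{0,\infty}+\varepsilon\ \gamma\text{-a.s.}\}$, or a gluing argument) are reasonable directions, but neither is routine: gluing two couplings of the \emph{same} pair $(\mu,\nu)$ does not produce a new coupling with both cost properties, and the constrained infimum has no a priori reason to equal the unconstrained $W_{s,p}$. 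One clean way to sidestep the issue is to work instead with the single-coupling quantity $\widetilde W(\mu,\nu)=\inf_{\gamma}\bigl[\|\,\|u-v\|_{L^2}\,\|_{L^\infty_\gamma}+(\int\|u-v\|_{H^s}^p\,d\gamma)^{1/p}\bigr]$, for which the whole argument goes through verbatim and yields $\widetilde W(\mu^t,\nu^t)\le C(1+R_2)e^{c|t|R_1}\widetilde W(\mu,\nu)$; since $\|\cdot\|_{s,p}\le\widetilde W$, this still gives the stability corollary, though the literal inequality \eqref{thtes} with $\|\cdot\|_{s,p}$ on the right would require the joint-optimiser you describe.
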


\begin{proof}
We assume that $\mu \in M_{s,p}$. We recall that $\mu^t(A) = \mu(\Psi(t)^{-1}A)$ for all measurable set $A$. We get that
$$
\mu^t \left( \left\{ u \Big| \|u\|_{L^2} > \lambda \right\}  \right) = \mu \left( \left\{ u \Big| \; \|\Psi(t)u\|_{L^2}> \lambda\right\} \right)
$$
thus, thanks to the conservation of the $L^2$ norm, and with the assumptions on $\mu$, $u$ belongs to $L^\infty_{\mu^t},L^2$.

We also have thanks to a change of variable that
$$
\|u\|_{L^p_{\mu^t},H^s} = \left( \int \|\Psi(t) u_0\|_{H^s}^p d\mu(u) \right)^{1/p} \; .
$$
The global bounds give an estimate of $\|\Psi(t) u_0\|_{H^s}$
$$
\|\Psi(t) u_0\|_{H^s} \leq e^{c |t| \|u_0\|_{L^2}^{12}}\|u_0\|_{H^s}
$$
which yields
$$
\|u\|_{L^p_{\mu^t},H^s}  \leq e^{c |t| \|u_0\|_{L^\infty_\mu,L^2}^{12}}\|u_0\|_{L^p_\mu,H^s}
$$
hence $\mu^t$ belongs to $M_{s,p}$.

Let $\gamma \in \textrm{Marg}(\mu,\nu)$ a measure on $H^s\times H^s$ with marginals $\mu$ and $\nu$. We call $\gamma^t$ the measure defined on the generating measurable sets of $H^s\times H^s$ as
$$
\gamma^t(X\times Y) = \gamma(\Psi(t)^{-1}X\times \Psi(t)^{-1}Y)\; .
$$
We remark that
$$
\gamma^t(X \times H^s) = \gamma(\Psi(t)^{-1}X\times H^s)
$$
even if $\Psi(t)^{-1}H^s$ may be larger than $H^s$, $\gamma$ is supported in $H^s\times H^s$, and since its marginals are $\mu$ and $\nu$,
$$
\gamma^t(X \times H^s) = \mu(\Psi(t)^{-1}X) = \mu^t(X)\; .
$$
The symmetrical argument yields $\gamma^t \in \textrm{Marg}(\mu^t,\nu^t)$. Therefore, we get that, by definition of the Wasserstein metrics, for every $\gamma$ in Marg($\mu,\nu$),
$$
W_{0,\infty}(\mu^t,\nu^t) \leq \big\|\; \|u-v\|_{L^2} \big\|_{L^\infty_{\gamma^t}}\; .
$$
We perform the change of variable $u = \Psi(t) u_0$ to get
$$
W_{0,\infty}(\mu^t,\nu^t) \leq \big\|\; \|\Psi(t)u_0-\Psi(t)v_0\|_{L^2} \big\|_{L^\infty_{\gamma}}\; .
$$
We use the global bound
$$
\|\Psi(t)u_0-\Psi(t)v_0\|_{L^2} \leq e^{c|t|(\|u_0\|_{L^2}+\|v_0\|_{L^2})^{12}} \|u_0-v_0\|_{L^2}
$$
and remark that $\gamma$ almost surely $(\|u_0\|_{L^2}+\|v_0\|_{L^2})^{12}$ is less than $R_1$, hence
$$
W_{0,\infty}(\mu^t,\nu^t) \leq e^{c|t|R_1}\big\|\; \|u_0-v_0\|_{L^2} \big\|_{L^\infty_{\gamma}}
$$
and taking the infimum over $\gamma \in \textrm{Marg}(\mu,\nu)$ gives
$$
W_{0,\infty}(\mu^t,\nu^t) \leq e^{c|t|R_1} W_{0,\infty}(\mu,\nu) \leq e^{c|t|R_1} \|\mu-\nu\|_{s,p}.
$$

With the same remarks on $\gamma^t$ and performing the same change on variable, we get
$$
W_{s,p}(\mu^t,\nu^t)^p \leq \int \|\Psi(t)u_0 - \Psi(t)v_0\|_{H^s}^pd\gamma(u_0,v_0) \; .
$$
We use the global bound, $\gamma$-almost surely
$$
\|\Psi(t)u_0 - \Psi(t)v_0\|_{H^s} \leq Ce^{c|t|R_1} \left( \|u_0-v_0\|_{L^2}(\|u_0\|_{H^s}+\|v_0\|_{H^s}) + \|u_0-v_0\|_{H^s}\right)
$$
to get as $\|u_0\|_{L^p_\gamma,H^s} = \|u_0\|_{L^p_\mu,H^s}$ since $\gamma$ has for marginals $\mu$ and $\nu$,
$$
W_{s,p}(\mu^t,\nu^t)^p \leq Ce^{cp|t|R_1} \left( \|u_0-v_0\|_{L^\infty_\gamma,L^2}^pR_2^p + \int \|u_0-v_0\|_{H^s}^p d\gamma(u_0,v_0) \right)\; .
$$
Taking the infimum over $\gamma \in \textrm{Marg}(\mu,\nu)$ yields
$$
W_{s,p}(\mu^t,\nu^t) \leq Ce^{c|t|R_1} \left( W_{0,\infty}(\mu,\nu) R_2 + W_{s,p}(\mu,\nu) \right)
$$
which concludes the proof.
\end{proof}

\section{Construction of an invariant measure for KdV}\label{sec-cons}

\subsection{Construction of measures, and notations}

We recall the construction of an invariant measure for KdV introduced by Bourgain in \cite{Binvkdv}

Let us first build an invariant measure $\mu$ through the linear flow of KdV. 

Let $(\Omega,\mathcal A,\mathbb P)$ be a probability space such that there exist two sequences $(h_n)_{n\geq 1} $ and $(l_n)_{n\geq 1}$ of independent real centered and normalized Gaussian variables. For all integers $0\leq N< M$, we write $\phi_N^M$ the random variable 
$$
\phi_N^M (\omega,x)= \sum_{n=N+1}^M \frac{h_n(\omega)c_n(x)+l_n s_n(x)}{n}\; ,
$$
where the set
$$
\lbrace c_n(x) = \frac{\cos(nx)}{\sqrt \pi} \; ,\; s_n(x) = \frac{\sin(nx)}{\sqrt \pi} \; \Big| \; n\geq 1\rbrace
$$
is the usual orthonormal basis of the functions of $L^2$ with $0$ mean value.

\begin{lemma} For all $s<\frac{1}{2}$, the sequence $(\phi_N^M)_{M>N}$ converges in $L^2(\Omega,H^s)$ when $M\rightarrow \infty$. \end{lemma}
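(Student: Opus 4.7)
The plan is to show that $(\phi_N^M)_{M>N}$ is Cauchy in the Hilbert space $L^2(\Omega, H^s)$, from which convergence follows by completeness. Since $L^2(\Omega, H^s)$-convergence amounts to an $L^2$-in-probability estimate on the $H^s$-norm of the differences, and since the random variables are built from independent Gaussians with explicit Fourier coefficients, the computation reduces to a single series.

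First, for $M' > M > N$, I would write
\begin{equation*}
\phi_N^{M'}(\omega,x) - \phi_N^M(\omega,x) = \sum_{n=M+1}^{M'} \frac{h_n(\omega) c_n(x) + l_n(\omega) s_n(x)}{n}.
\end{equation*}
Using that $\{c_n,s_n\}_{n\geq 1}$ is an orthonormal basis of the mean-zero subspace of $L^2(\mathbb T)$, and that $c_n, s_n$ are eigenfunctions of $-\partial_x^2$ with eigenvalue $n^2$, the mean-zero $H^s$ norm can be identified with
\begin{equation*}
\Big\|\sum_n (\alpha_n c_n + \beta_n s_n)\Big\|_{H^s}^2 = \sum_n n^{2s}(\alpha_n^2 + \beta_n^2).
\end{equation*}
Hence $\|\phi_N^{M'} - \phi_N^M\|_{H^s}^2 = \sum_{n=M+1}^{M'} n^{2s-2}\bigl(h_n(\omega)^2 + l_n(\omega)^2\bigr)$.

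Taking expectations and using $\mathbb{E}[h_n^2] = \mathbb{E}[l_n^2] = 1$ together with independence (in fact only linearity of the expectation is needed here) gives
\begin{equation*}
\mathbb{E}\bigl[\|\phi_N^{M'}-\phi_N^M\|_{H^s}^2\bigr] = 2 \sum_{n=M+1}^{M'} \frac{1}{n^{2-2s}}.
\end{equation*}
This is the tail of the Riemann series $\sum n^{-(2-2s)}$, which converges precisely when $2-2s > 1$, i.e.\ $s < 1/2$. Therefore the right-hand side tends to $0$ as $M,M' \to \infty$, which shows that $(\phi_N^M)_{M>N}$ is Cauchy in $L^2(\Omega, H^s)$ and hence converges.

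There is no substantive obstacle here: the argument is a direct Fourier/orthogonality computation combined with a tail estimate, and the hypothesis $s<1/2$ is exactly what makes the resulting series summable. The only mild point to be careful about is to invoke the mean-zero assumption (built into the basis $\{c_n,s_n\}$) so that the $H^s$-norm can be read off directly from the Fourier coefficients with weights $n^{2s}$ rather than $\langle n\rangle^{2s}$.
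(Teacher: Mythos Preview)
Your proof is correct and follows essentially the same approach as the paper: both show the sequence is Cauchy in $L^2(\Omega,H^s)$ by computing $\mathbb{E}\|\phi_N^{M'}-\phi_N^M\|_{H^s}^2 = 2\sum_{n=M+1}^{M'} n^{2s-2}$ and invoking convergence of this series for $s<1/2$ together with completeness. Your additional remarks on the mean-zero structure and on only needing linearity of expectation are accurate refinements but do not change the argument.
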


\begin{proof} The sequence is Cauchy. Indeed, for $M_1< M_2$, we have
$$
\|\phi_N^{M_2} - \phi_N^{M_1}\|_{L^2(\Omega,H^s)} = E(\|\phi_{M_1}^{M_2}\|^2_{H^s})^{1/2}
$$
where $E$ is the expectation with regard to $\mathbb P$. Thus,
$$
\|\phi_N^{M_2} - \phi_N^{M_1}\|_{L^2(\Omega,H^s)} = \left( E\Big( \sum_{n=M_1+1}^{M_2}\frac{|h_n|^2+|l_n|^2}{n^{2(1-s)}}\Big) \right)^{1/2}
$$
and since $E(|h_n|^2) = E(|l_n|^2) = 1$,
$$
\|\phi_N^{M_2} - \phi_N^{M_1}\|_{L^2(\Omega,H^s)} = \left( \sum_{n=M_1+1}^{M_2}\frac{2}{n^{2(1-s)}}\right)^{1/2}\; .
$$
Since $s<1/2$, the series of general term $\frac{1}{n^{2(1-s)}}$ converges and hence the sequence is Cauchy. As $L^2(\Omega,H^s)$ is complete, the sequence $(\phi_N^M)_M$ converges towards a limit $\phi_N$.\end{proof}

Let now $E_N$ be the vector space spanned by the set
$$
\lbrace c_n,s_n\;|\; 1\leq n\leq N\rbrace \; ,
$$
$E_N^\bot$ its orthogonal complement in $L^2$, and $\Pi_N$ the orthogonal projection on $E_N$.

We build measures on the $\sigma$-algebra associated to $H^{1/2-} = \bigcap_{s<1/2} H^s$. The topology of $H^{1/2-}$ is generated by the union of the topologies of $H^s$ traced on $H^{1/2-}$. In other words, an open set of $H^{1/2-}$ is a reunion of open sets of $H^s$ intersected with $H^{1/2-}$. 

We call $\mu_N^M$ the measure on the topological $\sigma$-algebra of $H^{1/2-}$ traced on $E_N^\bot$ induced by $\phi_N^M$ that is 
$$
\mu_N^M (A) = \mathbb P \Big((\phi_N^M)^{-1}(A)\Big)\; ,
$$
and $\mu_N$ the one induced by $\phi_N$. By convention, we write $\mu^M = \mu_0^M$ and $\mu = \mu_0$. As the support of $\mu^N$ is included in $E_N$, we have that for all $N$
$$
\mu = \mu^N\otimes \mu_N\; .
$$

\begin{remark}Notice that this is an abuse of notation since $\mu^N$ is a measure defined on the whole space $L^2$, but as its support is included in $E_N$, we denote by $\mu^N$ both the measure on $L^2$ and its restriction to $E_N$.\end{remark}

We now define an invariant measure $\rho$ for the KdV equation and invariant measures $\rho_N$ under the approximation in finite dimension of KdV given in the last section.

We introduce the following map on $H^{1/2-}$ :
$$
f(u) = \chi(\|u\|_{L^2}) e^{\int u^3}
$$
where $\chi (x) =1$ if $x\leq 1$ and $\chi(x)= 0$ otherwise, and $f_N$ the map given by
$$
f_N(u) = \chi(\|u\|_{L^2})e^{\int(\Pi_N(u))^3}\; .
$$

In order to build the invariant measure by the flow of KdV, we need to prove that $f$ is in $L^1_\mu$. The fact that $f_N$ is also in $L^1_\mu$ and that the sequence $(f_N)_N$ converges towards $f$ in $L^1_\mu$ is an element of the proof of the invariance of the measure. We sum up these properties in Proposition \ref{prop-cgfn} because the proofs require the same techniques.

\begin{proposition}The support of $\mu$ is included in $L^\infty(\T)$. Besides there exist $C,c> 0$ such that for all $R$
$$
\mu (\|u\|_{L^\infty} > R) \leq Ce^{-cR^2}
$$
and for all $N$ and all $R$  
$$
\mu^N (\|u\|_{L^\infty} > R) \leq Ce^{-cR^2}\; .
$$
\end{proposition}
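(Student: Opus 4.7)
\medskip
\textbf{Plan of proof.}
The plan is to estimate the $L^\infty$ norm of the random Fourier series $\phi$ via a dyadic Littlewood-Paley decomposition combined with Bernstein's inequality and standard Gaussian moment bounds. Write $\phi = \sum_{k \geq 0} \phi_{(k)}$, where
$$
\phi_{(k)}(\omega, x) = \sum_{2^k \leq n < 2^{k+1}} \frac{h_n(\omega) c_n(x) + l_n(\omega) s_n(x)}{n}
$$
is the $k$-th dyadic block, a trigonometric polynomial of spectrum $\sim 2^k$. For each fixed $x \in \T$, $\phi_{(k)}(\cdot,x)$ is a centered real Gaussian with variance $\sigma_k(x)^2 = \frac{1}{\pi}\sum_{2^k \leq n < 2^{k+1}} n^{-2} \lesssim 2^{-k}$, uniformly in $x$. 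Hence Gaussian moments give $\mathbb{E}[|\phi_{(k)}(\cdot, x)|^q] \leq (C\sqrt{q})^q 2^{-kq/2}$ for every $q \geq 2$, and integrating in $x$ yields $\mathbb{E}[\|\phi_{(k)}\|_{L^q(\T)}^q] \leq (C\sqrt{q})^q 2^{-kq/2}$.

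Next, Bernstein's inequality applied to $\phi_{(k)}$ gives $\|\phi_{(k)}\|_{L^\infty} \lesssim 2^{k/q}\|\phi_{(k)}\|_{L^q}$, whence
$$
\bigl(\mathbb{E}[\|\phi_{(k)}\|_{L^\infty}^q]\bigr)^{1/q} \lesssim \sqrt{q}\,2^{-k(1/2 - 1/q)}.
$$
Fixing $q \geq 4$ so that $1/2 - 1/q \geq 1/4$, I sum the dyadic blocks via Minkowski's inequality to obtain $\bigl(\mathbb{E}[\|\phi\|_{L^\infty}^q]\bigr)^{1/q} \leq C\sqrt{q}$ with $C$ independent of $q$. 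Chebyshev's inequality together with the optimization $q \sim R^2$ then delivers the Gaussian tail $\mu(\|u\|_{L^\infty} > R) \leq Ce^{-cR^2}$, which implies in particular that $\phi \in L^\infty(\T)$ almost surely, so that $\mu(L^\infty(\T)) = 1$ and the support of $\mu$ lies inside $L^\infty$.

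For $\mu^N$, the same argument applies verbatim to the finite partial sum $\phi_0^N$: only dyadic indices $k$ with $2^k \leq N$ contribute, and the dyadic estimates above are uniform in the range of summation, so the constants $C$ and $c$ can be taken independent of $N$.

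The main delicate point I anticipate is the a priori definition of the quantity $\|\phi\|_{L^\infty}$ as a measurable random variable: $\phi$ has been constructed only as an $L^2(\Omega, H^s)$ limit for $s < 1/2$, and $L^\infty(\T)$ is not separable. I would resolve this by applying the dyadic bound to the tails $\phi_{M_1}^{M_2}$ to show that the partial sums $(\phi_0^M)_M$ form a Cauchy sequence in $L^q(\Omega, L^\infty(\T))$ for $q \geq 4$. The $L^\infty$-valued limit then coincides almost surely with the already constructed $H^s$-limit, and the tail estimate transfers to $\mu$.
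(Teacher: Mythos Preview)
Your argument is correct. Both your proof and the paper's rest on the same two pillars---Gaussian hypercontractivity (the moment bound $\|Z\|_{L^q} \leq C\sqrt{q}\,\|Z\|_{L^2}$ for centered Gaussians) followed by Chebyshev with the optimal choice $q\sim R^2$---but the route from $L^q$ information to an $L^\infty$ bound differs. The paper fixes $s<1/2$ and $p>2/s$, uses the Sobolev embedding $W^{s,p}(\T)\hookrightarrow L^\infty(\T)$, and then controls $\|u\|_{L^q_\mu,W^{s,p}}$ by a single Minkowski swap of $L^q(\Omega)$ and $L^p(\T)$, exploiting that for each fixed $x$ the series $\sum n^{s-1}(h_n c_n(x)+l_n s_n(x))$ is a scalar Gaussian with variance bounded uniformly in $x$. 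You instead break $\phi$ into dyadic blocks, apply Bernstein's inequality $\|\phi_{(k)}\|_{L^\infty}\lesssim 2^{k/q}\|\phi_{(k)}\|_{L^q}$ on each block, and sum. The paper's approach is a bit shorter, since one embedding replaces the whole dyadic machinery; your approach is more hands-on and has the advantage that the block estimates immediately show the partial sums are Cauchy in $L^q(\Omega,L^\infty)$, which cleanly resolves the measurability issue you raise (a point the paper does not discuss explicitly). Either way the constants are uniform over truncations, so the $\mu^N$ statement follows identically in both proofs.
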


\begin{proof}Let $s<1/2$ and $p>\frac{2}{s}$ such that the Sobolev embedding $ W^{s,p}\subset L^\infty$ holds. Let us prove that the support of $\mu$ is included in $W^{s,p}$. To do that, we bound the $L^q_\mu$ norm of $\|u\|_{W^{s,p}}$ for any $q\geq p$. After a change of variable, we get that
$$
\|u\|_{L^q_\mu, W^{s,p}} =\|\phi_0\|_{L^q_{\mathbb P},W^{s,p}}
$$
and by Minkowski inequality, since $q\geq p$
$$
\|u\|_{L^q_\mu, W^{s,p}} \leq \|\sum_{n\geq 1} \frac{h_nc_n(x)+l_ns_n(x)}{n^{1-s}}\|_{L^p(\T, L^q(\Omega))}\; .
$$
Since for all $x\in \T$,
$$
\sum_{n\geq 1} \frac{h_nc_n(x)+l_ns_n(x)}{n^{1-s}}
$$
is a centered Gaussian variable in $\R$, we have that
$$
\|\sum_{n\geq 1} \frac{h_nc_n(x)+l_ns_n(x)}{n^{1-s}}\|_{L^q(\Omega)} \leq C\sqrt q \|\sum_{n\geq 1} \frac{h_nc_n(x)+l_ns_n(x)}{n^{1-s}}\|_{L^2(\Omega)}\; ,
$$
as for all centered Gaussian variable and all $q \geq 1$, we have, by induction over $q \in \N$ and interpolation, $E(|Z|^q)^{1/q} \leq C\sqrt q E(Z^2)^{1/2}$. We refer to \cite{BTranI} for the proof.

Hence, as the $h_n$ and the $l_n$ are all independent from each other, we get that 
$$
\|\sum_{n\geq 1} \frac{h_nc_n(x)+l_ns_n(x)}{n^{1-s}}\|_{L^p(\Omega)} \leq C\sqrt p \left( \sum_{n\geq 1}\frac{|c_n(x)|^2+|s_n(x)|^2}{n^{2(1-s)}}\right)^{1/2}\; .
$$
Using once more the fact that $s<1/2$ so that the series $\sum n^{2(s-1)}$ converges and that $|c_n(x)|^2+|s_n(x)|^2 = \pi^{-1}$, we get that the $L^q(\Omega)$ norm of
$$
\sum_{n\geq 1} \frac{h_nc_n(x)+l_ns_n(x)}{n^{1-s}}
$$
is bounded by a a constant independent from $x$. Then, taking the $L^p(\T)$ norm, we have that the $L^q_\mu,W^{s,p}$ norm of $u$ is finite. Therefore, $u$ is $\mu$-almost surely in $W^{s,p}$ and hence $\mu$-almost surely in $L^\infty$. Replacing $\phi_0$ by $\phi_0^N$ and following the same proof ensures that there exists a constant $C$ such that for all $q\geq p$ and all $N$  
$$
\|u\|_{L^q_{\mu^N},L^\infty} \leq \|u\|_{L^q_{\mu^N},W^{s,p}} \leq C \sqrt q \; .
$$
Then, for all $q\geq p$, we can bound the probabilities
$$
\mu (\|u\|_{L^\infty} \geq R) \mbox{ and } \mu_N (\|u\|_{L^\infty} \geq R) \; .
$$
Indeed, by Markov's inequality,
$$
\mu (\|u\|_{L^\infty} \geq R) = \mu(\|u\|_{L^\infty}^q \geq R^q) \leq R^{-q} E_\mu (\|u\|_{L^\infty}^q)\; ,
$$
hence
$$
\mu (\|u\|_{L^\infty} \geq R)\leq R^{-q} C^q q^{q/2}
$$
and we have the same inequality with $\mu_N$. If $R\geq Ce \sqrt p$, then $\left( \frac{R}{Ce}\right)^2 \geq p$, thus we can choose $q = \left( \frac{R}{Ce}\right)^2$ in the previous inequality, yielding
$$
\mu (\|u\|_{L^\infty} \geq R)\leq e^{-q} = e^{-c R^2}
$$
with $c = \frac{1}{Ce}$. If $R\leq Ce\sqrt p$, then
$$
\mu (\|u\|_{L^\infty} \geq R)e^{c R^2} \leq C_0 = e^{c C^2e^2p}\; ,
$$
therefore for all $R$,
$$
\mu (\|u\|_{L^\infty} \geq R)\leq  C_0 e^{-c R^2}\; .
$$
\end{proof}

\begin{remark}\label{HSldge} By replacing the $L^\infty$ and $W^{s,p}$ norms in the proof by the $H^s$ norms, we have that for all $s<1/2$, there exist $C_s,c_s >0$ such that for all $R$,
$$
\mu (\|u\|_{H^s}\geq R) \leq C_s e^{-c_s R^2}\; .
$$\end{remark}

\begin{proposition}\label{prop-cgfn}The maps $f$ and $f_N$ are in $L^1_\mu$ and the sequence $(f_N)_{N}$ converges toward $f$ in $L^1_\mu$.\end{proposition}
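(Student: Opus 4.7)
The plan is to establish integrability of $f$ and of each $f_N$ by bounding their exponents in terms of the sub-Gaussian random variables $\|u\|_{L^\infty}$ and $\|\Pi_N u\|_{L^\infty}$, and then to apply dominated convergence to get the $L^1$ limit.

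First I would observe that on the support $\{\|u\|_{L^2}\leq 1\}$ of $f$, H\"older's inequality gives
$$\Big|\int_{\T}u^3\,dx\Big|\leq \|u\|_{L^\infty}\|u\|_{L^2}^2\leq \|u\|_{L^\infty},$$
so that $f(u)\leq e^{\|u\|_{L^\infty}}$. Combined with the tail estimate $\mu(\|u\|_{L^\infty}>R)\leq Ce^{-cR^2}$ of the previous proposition, an elementary layer-cake computation (splitting $\int_0^\infty \mu(e^{\|u\|_{L^\infty}}>t)\,dt$ at $t=e$) gives $E_\mu[e^{\|u\|_{L^\infty}}]<\infty$, hence $f\in L^1_\mu$.

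For the $f_N$ the same H\"older bound yields $|\int (\Pi_N u)^3|\leq \|\Pi_N u\|_{L^\infty}$ on the support, so $f_N(u)\leq e^{\|\Pi_N u\|_{L^\infty}}$. To obtain a dominating function independent of $N$ I would invoke the boundedness of the Fourier projector $\Pi_N$ on $W^{s,p}(\T)$, uniformly in $N$, for any $1<p<\infty$ and $s\geq 0$ (it commutes with $|D|^s$ and reduces to a partial-sum operator on $L^p$ of norm bounded independently of $N$). Picking $s<1/2$ and $p>2/s$ so that the Sobolev embedding $W^{s,p}\hookrightarrow L^\infty$ used in the previous proposition applies, one gets $\|\Pi_N u\|_{L^\infty}\lesssim \|u\|_{W^{s,p}}$, and hence $f_N(u)\leq e^{C\|u\|_{W^{s,p}}}$ uniformly in $N$. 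The argument of the previous proposition, carried out with the $W^{s,p}$ norm rather than the $L^\infty$ norm, yields $\mu(\|u\|_{W^{s,p}}>R)\leq Ce^{-cR^2}$, so this dominating function is in $L^1_\mu$; in particular every $f_N$ is in $L^1_\mu$.

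It remains to prove pointwise $\mu$-a.s.\ convergence of $f_N$ to $f$, after which dominated convergence finishes the argument. For $\mu$-a.e.\ $u$ the previous proposition gives $u\in L^\infty\subset L^3$, and on the torus the Fourier partial sums converge in $L^p$ for every $1<p<\infty$, so $\Pi_N u\to u$ in $L^3$. The factorization $a^3-b^3=(a-b)(a^2+ab+b^2)$ together with H\"older in $L^3\times L^{3/2}$ then gives
$$\Big|\int(\Pi_N u)^3 - \int u^3\Big|\leq \|\Pi_N u-u\|_{L^3}\,\bigl(\|\Pi_N u\|_{L^3}^2+\|\Pi_N u\|_{L^3}\|u\|_{L^3}+\|u\|_{L^3}^2\bigr)\to 0,$$
hence $f_N(u)\to f(u)$, and combining this with the uniform majorant yields $f_N\to f$ in $L^1_\mu$. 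The step I expect to be the most delicate is the uniform-in-$N$ dominating bound: the naive estimate $f_N\leq e^{\|\Pi_N u\|_{L^\infty}}$ does not produce a single $\mu$-integrable majorant on its own, and it is precisely the uniform boundedness of $\Pi_N$ on $W^{s,p}$ together with the Gaussian-type tails of $\|u\|_{W^{s,p}}$ under $\mu$ that save the dominated convergence argument.
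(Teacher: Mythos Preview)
Your argument is correct, but it proceeds differently from the paper on two points.

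For the uniform $L^1_\mu$ bound on $f_N$, the paper does not pass through the Riesz theorem on Fourier partial sums. Instead it exploits the product structure $\mu=\mu^N\otimes\mu_N$ to write
\[
\int e^{\|\Pi_N u\|_{L^\infty}}\,d\mu(u)=\int e^{\|u\|_{L^\infty}}\,d\mu^N(u),
\]
and then uses that the Gaussian tail bound $\mu^N(\|u\|_{L^\infty}>R)\leq Ce^{-cR^2}$ holds with constants independent of $N$ (this was part of the previous proposition). Your route via the uniform $W^{s,p}$-boundedness of $\Pi_N$ is perfectly valid, and has the advantage of producing a single dominating function $e^{C\|u\|_{W^{s,p}}}$; the paper's route stays closer to the probabilistic structure of $\mu$ and avoids invoking any harmonic-analysis input beyond what was already proved.

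For the convergence $f_N\to f$, the paper does not use dominated convergence at all. It estimates $|f-f_N|$ directly via $|e^x-e^y|\leq|x-y|e^{|x|}e^{|y|}$ and a H\"older bound on $\int(u^3-(\Pi_N u)^3)$, obtaining the pointwise inequality
\[
|f(u)-f_N(u)|\leq CN^{-s}\|u\|_{H^s}\bigl(\|u\|_{L^\infty}+\|\Pi_N u\|_{L^\infty}\bigr)f(u)f_N(u),
\]
and then integrates to get the quantitative rate $\|f-f_N\|_{L^1_\mu}\leq CN^{-s}$. Your dominated-convergence argument is shorter and entirely sufficient for the statement as written, but it yields no rate; the paper's explicit $N^{-s}$ bound is a slightly stronger conclusion, though in the sequel only the qualitative convergence is actually used.
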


\begin{proof}As we have the inequality
$$
\int{u^3} \leq \|u\|_{L^2}^2 \|u\|_{L^\infty}
$$
we get that, since $\|u\|_{L^2}\leq 1$ in the support of $f$,
$$
|f(u)| \leq e^{\|u\|_{L^\infty}} \; .
$$
Besides, since $\|\Pi_N u\|_{L^2} \leq \|u\|_{L^2}$ in the support of $f_N$,
$$
|f_N(u)| \leq e^{\|\Pi_N u\|_{L^\infty}} \; .
$$
We also have, since $\mu = \mu^N\otimes \mu_N$  
$$
\int  e^{\|\Pi_N u\|_{L^\infty}} d\mu(u) = \int  e^{\| u\|_{L^\infty}}d\mu^N(u)\; .
$$
Hence, the problem is reduced to prove that $e^{\|u\|_{L^\infty}}$ is $\mu$ and $\mu^N$ integrable, which comes from the fact that
$$
\mu(\|u\|_{L^\infty} \geq R)\leq  C_0 e^{-c R^2}\mbox{ and }\mu_N(\|u\|_{L^\infty} \geq R)\leq  C_0 e^{-c R^2}\; .
$$
Besides, as $C_0$ and $c$ are independent from $N$, the $L^1_\mu$ norm of $f_N$ is bounded uniformly in $N$.

Let us now prove the convergence of the sequence. We compare $f(u)$ and $f_N(u)$. For every $u$, we have  
$$
|f(u)-f_N(u)|\leq \Big| \int (u^3 - (\Pi_N u)^3)\Big| f(u)f_N(u)
$$
using the inequality $|e^x-e^y|\leq |x-y|e^{|x|} e^{|y|}$ and that $\chi = \chi^2$. Since $|u^3 - (\Pi_N u)^3|$ is bounded by $C|u-\Pi_N u| (u^2 +(\Pi_N u)^2)$, we get, by applying a H\"older inequality
$$
\int |u^3 - (\Pi_N u)^3| \leq C \|(1-\Pi_N)u\|_{L^2} \left( \|u\|_{L^2}+\|\Pi_N u\|_{L^2} \right) \left( \|u\|_{L^\infty}+\|\Pi_N u\|_{L^\infty} \right) \; .
$$
We then use that on the support of $f$ and $f_N$, the $L^2$ norms of $u$ and $\Pi_N u$ are bounded by $1$ and that $u$ belongs to $H^s$ on the support of $\mu$ as long as $s< 1/2$ to get 
$$
|f(u)-f_N(u)|\leq CN^{-s} \|u\|_{H^s} \left( \|u\|_{L^\infty}+\|\Pi_N u\|_{L^\infty} \right) f(u)f_N(u) \; .
$$
Finally, using Remark \ref{HSldge},
$$
\mu(\|u\|_{H^s} \geq R) \leq C_se^{-c_s R^2} \; , \; \mu(\|u\|_{L^\infty}\geq R) \leq Ce^{-cR^2} \; ,
$$
we get that
$$
\|u\|_{H^s} (\|u\|_{L^\infty}+\|\Pi_N u\|_{L^\infty}) e^{\|u\|_{L^\infty}+\|\Pi_N u\|_{L^\infty}}
$$
is $\mu$-integrable with a $L^1_\mu$ norm independent from $N$. Therefore,
$$
\|f-f_N\|_{L^1_\mu} \leq CN^{-s} \; ,
$$
which concludes the proof.
\end{proof}

We write then $\kappa = \|f\|_{L^1_\mu}^{-1}$ and we denote by $\rho_N$ and $\rho$ the measures defined as  
$$
d\rho_N(u) = \kappa f_N(u) d\mu(u) \; , \; d\rho(u) = \kappa f(u) d\mu(u) \; .
$$
The measure $\rho_N$ is built to be invariant under the flow of the approximation of KdV, which will lead, thanks to convergence properties to the invariance of $\rho$ under the flow of KdV.

\subsection{Invariance by the linear flow}

In this subsection, we prove the invariance of $\mu_N$ under the linear flow $S(t)$ of the equation $\partial_t+\partial_x^3 = 0$.

\begin{proposition}\label{prop-openset} If $U$ is an open set of $H^{1/2-}$, then, for all $N$,
\begin{equation}\label{openset}
\mu_N (U) \leq \liminf_{M\rightarrow \infty} \mu_N^M (U)\; 
\end{equation}
\end{proposition}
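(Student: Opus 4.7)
The statement is the open-set half of the Portmanteau characterization of weak convergence applied to the laws $\mu_N^M \to \mu_N$. My plan is to promote the $L^2(\Omega,H^s)$-convergence of $\phi_N^M \to \phi_N$ (already proved) to almost-sure convergence in $H^{1/2-}$ along a subsequence, and then invoke Fatou's lemma pointwise.

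First I would upgrade the convergence. Pick a sequence $s_j \nearrow 1/2$. By the preceding lemma, $\phi_N^M \to \phi_N$ in $L^2(\Omega,H^{s_j})$ for each $j$. Given any subsequence $M_k\to\infty$, a diagonal extraction yields a sub-subsequence $(M_{k_\ell})$ along which $\phi_N^{M_{k_\ell}}(\omega)\to \phi_N(\omega)$ in $H^{s_j}$ almost surely, for every $j$ simultaneously. Since convergence in $H^{s_j}$ for all $j$ implies convergence in $H^s$ for every $s<1/2$ (by interpolation and $s_j\nearrow 1/2$), on a full-measure event we have $\phi_N^{M_{k_\ell}}(\omega)\to \phi_N(\omega)$ in each $H^s$, $s<1/2$.

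Next I would use the description of the topology of $H^{1/2-}$ stated just before the proposition: any open $U\subset H^{1/2-}$ is a union of sets of the form $V\cap H^{1/2-}$ with $V$ open in some $H^s$, $s<1/2$. Fix $\omega$ in the full-measure event above with $\phi_N(\omega)\in U$. Then there exist $s<1/2$ and an open $V\subset H^s$ with $\phi_N(\omega)\in V\cap H^{1/2-}\subset U$. Because $\phi_N^{M_{k_\ell}}(\omega)\to \phi_N(\omega)$ in $H^s$ and $V$ is open in $H^s$, eventually $\phi_N^{M_{k_\ell}}(\omega)\in V$; since each $\phi_N^{M_{k_\ell}}(\omega)$ belongs to $H^{1/2-}$, this places $\phi_N^{M_{k_\ell}}(\omega)$ in $U$ for $\ell$ large. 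Thus
$$
\mathbf 1_U(\phi_N(\omega))\le \liminf_{\ell\to\infty}\mathbf 1_U(\phi_N^{M_{k_\ell}}(\omega))\quad\text{almost surely.}
$$

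Finally, Fatou's lemma gives
$$
\mu_N(U)=\mathbb E[\mathbf 1_U(\phi_N)]\le \liminf_{\ell\to\infty}\mathbb E[\mathbf 1_U(\phi_N^{M_{k_\ell}})]=\liminf_{\ell\to\infty}\mu_N^{M_{k_\ell}}(U).
$$
Since this estimate was obtained for a sub-subsequence of an arbitrary initial subsequence $(M_k)$, the standard characterisation of $\liminf$ yields $\mu_N(U)\le \liminf_{M\to\infty}\mu_N^M(U)$, as claimed.

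The only nontrivial step is the second one, where one must handle the (not obviously metrisable) topology of $H^{1/2-}$; the point is that a \emph{single} basic neighborhood of any point $\phi_N(\omega)\in U$ already lives in some $H^s$ with $s<1/2$, which is precisely the regularity in which the approximations $\phi_N^M$ converge. Everything else is a routine diagonal extraction plus Fatou.
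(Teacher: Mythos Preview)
Your proof is correct and follows the same Portmanteau/Fatou template as the paper, but the two implementations differ in an instructive way.

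The paper first reduces to open sets of the form $V\cap H^{1/2-}$ with $V$ open in a \emph{fixed} $H^s$, via the elementary inequality $\sup_{s_0}\liminf_M \le \liminf_M \sup_{s_0}$. It then avoids all subsequence extraction by observing that $\|\phi_N^M(\omega)-\phi_N(\omega)\|_{H^s}^2 = \sum_{n>M} n^{-2(1-s)}(|h_n(\omega)|^2+|l_n(\omega)|^2)$ is \emph{monotone decreasing} in $M$; combined with $L^2(\Omega)$-convergence this gives almost-sure convergence of the \emph{full} sequence, so Fatou applies directly to $\liminf_M$ without any sub-subsequence bookkeeping.

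Your route trades this structural monotonicity for a diagonal extraction across a countable family $s_j\nearrow 1/2$, then handles the $H^{1/2-}$ topology pointwise (each $\phi_N(\omega)\in U$ already sits in a basic neighbourhood from a single $H^s$), and finally recovers the full $\liminf$ by the standard ``every subsequence has a further subsequence'' argument. This is slightly heavier but more portable: it would work for any approximating family converging in each $L^2(\Omega,H^s)$, whereas the paper's shortcut is specific to the nested Fourier truncations.
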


We deduce from this proposition the following corollary.

\begin{corollary}\label{cor-closet} For every closed set $F$ of $H^{1/2-}$ and every $N$  
$$
\mu_N (F) \geq \limsup_{M\rightarrow \infty} \mu_N^M(F) \; .
$$
\end{corollary}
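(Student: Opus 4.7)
The plan is to exploit the duality between open and closed sets via complementation. Since $\mu_N$ and $\mu_N^M$ are probability measures on the topological $\sigma$-algebra of $H^{1/2-}$ (they are pushforwards of $\mathbb{P}$ under $\phi_N$ and $\phi_N^M$ respectively, with values $\mu$-almost surely in $H^{1/2-}$), we have $\mu_N(H^{1/2-}) = \mu_N^M(H^{1/2-}) = 1$.

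Given a closed subset $F$ of $H^{1/2-}$, set $U = H^{1/2-} \setminus F$, which is open in $H^{1/2-}$. I would then apply Proposition \ref{prop-openset} to $U$, yielding
$$
\mu_N(U) \leq \liminf_{M \to \infty} \mu_N^M(U).
$$
Passing to complements and using that $\mu_N(F) = 1 - \mu_N(U)$ and $\mu_N^M(F) = 1 - \mu_N^M(U)$, together with the elementary identity $-\liminf_M x_M = \limsup_M (-x_M)$ applied to $x_M = \mu_N^M(U)$, I would conclude
$$
\mu_N(F) = 1 - \mu_N(U) \geq 1 - \liminf_{M \to \infty} \mu_N^M(U) = \limsup_{M \to \infty} \bigl(1 - \mu_N^M(U)\bigr) = \limsup_{M \to \infty} \mu_N^M(F),
$$
which is exactly the claimed inequality. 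There is no genuine obstacle here beyond checking that the total mass is indeed $1$ for all the measures involved so that the complementation step is legitimate; this is immediate from the construction of $\mu_N$ and $\mu_N^M$ as laws of random variables with values in $H^{1/2-}$.
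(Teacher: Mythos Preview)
Your argument is correct and is exactly the intended deduction: the paper does not spell out a separate proof of the corollary but simply states it as an immediate consequence of Proposition~\ref{prop-openset}, and passing to complements together with the identity $1-\liminf_M x_M=\limsup_M(1-x_M)$ is the standard (and only natural) way to obtain it.
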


\begin{proof} Proposition \ref{prop-openset}. We can reduce the problem by proving it on the intersections of open sets of $H^s$ with $H^{1/2-}$ for every $s< 1/2$. Indeed, if $U$ is an open set of $H^{1/2-}$ it can be described as the union
$$
U = \Big(\bigcup_{s<1/2} U_s\Big)
$$
with $U_s$ the intersection of an open set in $H^s$ and $H^{1/2-}$. We thus can write $U$ as the limit of an increasing sequence of sets 
$$
U= \bigcup_{s_0<1/2} \bigcup_{s\leq s_0} U_s 
$$
and hence
$$
\mu_N (U) = \sup_{s_0} \mu_N \Big(\bigcup_{s\leq s_0} U_s\Big) \; ,\; \mu_N^M (U) = \sup_{s_0} \mu_N^M \Big(\bigcup_{s\leq s_0} U_s\Big)\; .
$$
Then, if the proposition is true for every intersection of an open set of $H^{s_0}$ with $H^{1/2-}$, for any $s_0 < 1/2$, as $\bigcup_{s\leq s_0} U_s$ is one of these sets, we get 
$$
\mu_N(U) \leq \sup_{s_0 <1/2} \liminf_{M \rightarrow \infty} \mu_N^M \Big(\bigcup_{s\leq s_0} U_s\Big) \; .
$$
We exploit the property that for any map $f$ into a totally ordered set 
$$
\sup_{s_0} \liminf_{M} f(s_0,M) \leq \liminf_M \sup_{s_0}f(s_0,M)
$$
to conclude. We have that
$$
\mu_N(U)\leq \liminf_{M \rightarrow \infty} \sup_{s_0 <1/2} \mu_N^M \Big(\bigcup_{s\leq s_0} U_s\Big)
$$
and 
$$
\sup_{s_0 <1/2} \mu_N^M \Big(\bigcup_{s\leq s_0} U_s\Big) = \mu_N^M (U)\; .
$$

Let $V$ be an open set of $H^s$ and $U = V\cap H^{1/2-}$. Let $A = \phi_N^{-1}(U)$ and $A^M = (\phi_N^M)^{-1}(U)$. Since the images of $\phi_N$ and $\phi_N^M$ are almost surely included in $H^{1/2-}$, we have $A = \phi_N^{-1}(V)$ and $A^M = (\phi_N^M)^{-1}(V)$. The inequality \eqref{openset} can be rewritten as
$$
\mathbb P(A) \leq \liminf_{M\rightarrow \infty} \mathbb P(A^M) \; .
$$
Since $V$ is an open set, for all $\omega \in A$ there exists $\varepsilon > 0$ such that $\phi_N(\omega )+ B_\varepsilon $ is included in $V$. As $\phi_N^M$ converges towards $\phi_N$ in $L^2(\Omega,H^s)$, for almost all $\omega$, $\phi_N^M(\omega)$ converges towards $\phi_N(\omega)$. This is due to the fact that for all $\omega\in \Omega$, the sequence $\|\phi_N^M (\omega) -\phi_N(\omega)\|_{H^s}$ decreases when $M$ increases.

Hence, there exists $M_0$ such that for all $M\geq M_0$, $\phi_N^M(\omega) \in \phi_N(\omega) +B_\varepsilon \subset V$, thus $\omega$ belongs to $A^M$, that is to say
$$
\omega \in \bigcup_{M_0} \bigcap_{M\geq M_0} A^M = \liminf_{M\rightarrow \infty} A^M \;.
$$
Hence, $A$ is included in $\liminf A^M$ up to a set of $0$ probability. Therefore,
$$
\mathbb P(A) \leq \mathbb P(\liminf A^M)
$$
and the application of Fatou's lemma concludes the proof.
\end{proof}

We call $\mu_N^t$ the image measure of $\mu_N$ through $S(t)$, that is
$$
\mu_N^t (A) = \mu_N (S(t)^{-1}A)\; .
$$

\begin{proposition} For every $t$ and every $N$, we have that
$$
\mu_N^t = \mu_N \; .
$$
\end{proposition}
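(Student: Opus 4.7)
The plan is to deduce invariance of $\mu_N$ from invariance of the finite-dimensional truncations $\mu_N^M$, and then pass to the limit using Proposition \ref{prop-openset} and Corollary \ref{cor-closet}.

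First, I would establish $\mu_N^{M,t}=\mu_N^M$ for every $M>N$. The point is that $S(t)$ acts on each two-dimensional subspace $\mathrm{span}(c_n,s_n)$ as a rotation of angle $n^3 t$: using $\partial_x^3\cos(nx)=n^3\sin(nx)$ and $\partial_x^3\sin(nx)=-n^3\cos(nx)$ we get
$$
S(t)c_n=\cos(n^3 t)\, c_n-\sin(n^3 t)\, s_n,\qquad S(t)s_n=\sin(n^3 t)\, c_n+\cos(n^3 t)\, s_n.
$$
Hence $S(t)\phi_N^M=\sum_{n=N+1}^M (\tilde h_n c_n+\tilde l_n s_n)/n$, where $(\tilde h_n,\tilde l_n)$ is obtained from $(h_n,l_n)$ by an orthogonal transformation. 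Because the standard two-dimensional Gaussian law is rotation invariant and because the pairs $(h_n,l_n)$ are independent across $n$ and the rotations act only within each pair, the joint law of $(\tilde h_n,\tilde l_n)_{N<n\leq M}$ coincides with that of $(h_n,l_n)_{N<n\leq M}$. Therefore $S(t)\phi_N^M$ and $\phi_N^M$ share the same distribution, i.e.\ $\mu_N^{M,t}=\mu_N^M$.

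Second, I would pass to the limit. Proposition \ref{prop-openset} and Corollary \ref{cor-closet} together are the two halves of the Portmanteau characterization of weak convergence and therefore state that $\mu_N^M$ converges weakly to $\mu_N$ on $H^{1/2-}$; equivalently, $\int f\,d\mu_N^M\to\int f\,d\mu_N$ for every bounded continuous $f$. Since $S(t)$ is an isometry on each $H^s$ and hence continuous on $H^{1/2-}$, $f\circ S(t)$ is bounded and continuous whenever $f$ is, so
$$
\int f\,d\mu_N^t=\int f\circ S(t)\,d\mu_N=\lim_M\int f\circ S(t)\,d\mu_N^M=\lim_M\int f\,d\mu_N^{M,t}=\lim_M\int f\,d\mu_N^M=\int f\,d\mu_N,
$$
where the penultimate equality uses the finite-dimensional invariance established above. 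Since Borel probability measures are determined by their integrals against bounded continuous functions, this forces $\mu_N^t=\mu_N$.

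The only step that might require minor care is the translation of Proposition \ref{prop-openset} and its corollary into actual weak convergence of probability measures, but this is standard once one reduces to working on a single space $H^s$ with $s<1/2$, on which $S(t)$ is unitary. No obstacle beyond combining the finite-dimensional rotation invariance of Gaussians with the approximation already proved should appear.
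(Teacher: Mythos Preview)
Your argument is correct and rests on the same two ingredients as the paper's proof: the rotation invariance of the finite-dimensional Gaussian laws $\mu_N^M$ under $S(t)$, and the approximation property recorded in Proposition~\ref{prop-openset} and Corollary~\ref{cor-closet}. The difference is only in how the limit $M\to\infty$ is packaged. You observe that Proposition~\ref{prop-openset} is exactly the open-set half of the Portmanteau theorem, upgrade it to genuine weak convergence $\mu_N^M\Rightarrow\mu_N$ on the metrizable space $H^{1/2-}$, and then invoke the continuous mapping theorem for the isometry $S(t)$ together with uniqueness of weak limits. The paper instead works by hand on closed sets: it thickens $F$ to the open set $F+B_\varepsilon$, applies Proposition~\ref{prop-openset} and the finite-dimensional invariance, closes back up to $F+\overline{B_\varepsilon}$ to use Corollary~\ref{cor-closet}, and lets $\varepsilon\to 0$; the reverse inequality comes from reversibility of the flow. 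Your route is shorter and makes the structure (weak convergence plus continuous pushforward) transparent; the paper's route avoids naming Portmanteau and stays self-contained within the inequalities already established.
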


\begin{proof}Let $s<1/2$, $F$ be a closed set of $H^s$ and $B_\varepsilon$ be the open ball of radius $\varepsilon$ and center $0$ in $H^s$. As $F+B_\varepsilon$ is an open set of $H^s$ and $S(t)$ preserves the $H^s$ norm and thus the topology, we have that $S(t)^{-1}(F+B_\varepsilon)$ is an open set of $H^s$. Hence, we have that
$$
\mu_N^t( F) \leq \mu_N^t(F+B_\varepsilon) = \mu_N (S(t)^{-1}(F+B_\varepsilon))
$$
and using Proposition \ref{prop-openset} 
$$
\mu_N^t(F) \leq \liminf \mu_N^M(S(t)^{-1}(F+B_\varepsilon))\; .
$$
Since the vector space spanned by $\lbrace c_n,s_n, n=N+1 , \hdots, M\rbrace $ is stable under $S(t)$, that $S(t)$ is a rotation on this space and $\mu_N^M$ is a Gaussian vector supported on this space with mean value $0$, we have that $\mu_N^M$ is invariant under the transformation $S(t)$. Therefore, we get that
$$
\mu_N^M(S(t)^{-1}(F+B_\varepsilon)) = \mu_N^M(F+B_\varepsilon) \; .
$$
Then, we use that the $\liminf$ of a sequence is less than its $\limsup$ and that $B_\varepsilon$ is included in the closed ball $\overline{B_\varepsilon}$ of center $0$ and radius $\varepsilon$ to get
$$
\mu_N^t(F) \leq \limsup_{M\rightarrow \infty} \mu_N^M (F+\overline{B_\varepsilon})\; .
$$
Using Corollary \ref{cor-closet}, we get that
$$
\mu_N^t(F) \leq \mu_N(F+\overline{B_\varepsilon}) \; .
$$
Finally, we apply the dominated convergence theorem ($F$ is closed) to get
$$
\mu_N^t(F) \leq \mu_N(F)\; .
$$
To get the reverse inequality, we use the reversibility of the flow, yielding
$$
\mu_N(F) = \mu_N ((S(t)\circ S(-t))^{-1}F) = \mu_N^{-t}(S(t)^{-1}F)\leq \mu_N (S(t)^{-1}F) ) = \mu_N^t(F)
$$
Therefore, for any closed set of $H^s$,
$$
\mu_N^t(F) = \mu_N(F) \; ,
$$
and since this equality is preserved by taking the complementary sets and countable disjoint unions and is true for any closed set of $H^s$,$s<1/2$, it holds for all sets in the topological $\sigma$-algebra of $H^{1/2-}$.\end{proof}

\subsection{Invariance under the non linear flow}

\begin{proposition}The measure $\rho_N$ is invariant under the flow $\Psi_N(t)$ of the approximation of KdV 
\begin{equation}\label{approxkdv}
u_t + u_{xxx} + \Pi_N\left( \frac{(\Pi_N u)^2}{2}\right) = 0
\end{equation}
that is for every time $t$ and every measurable set $A$,
$$
\rho_N^t(A) : = \rho_N (\Psi_N(t)^{-1} A) = \rho_N(A) \; .
$$
\end{proposition}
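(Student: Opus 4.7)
The plan is to exploit the decoupling of the truncated KdV equation into a finite dimensional Hamiltonian system on $E_N$ and a free Airy flow on $E_N^\perp$. Writing $u = u_1 + u_2$ with $u_1 = \Pi_N u \in E_N$ and $u_2 = (1-\Pi_N)u \in E_N^\perp$, equation \eqref{approxkdv} decouples into $\partial_t u_1 + \partial_x^3 u_1 + \Pi_N\partial_x(u_1^2/2) = 0$ and $\partial_t u_2 + \partial_x^3 u_2 = 0$, so that $\Psi_N(t)$ acts as $\Psi_N^{\mathrm{fin}}(t) \oplus S(t)$ with respect to the orthogonal decomposition $L^2 = E_N \oplus E_N^\perp$, where $\Psi_N^{\mathrm{fin}}$ is a smooth flow on the finite dimensional space $E_N$. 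This is compatible with the product structure $\mu = \mu^N \otimes \mu_N$, and the density $f_N$ couples the two factors only through the characteristic function $\chi(\sqrt{\|u_1\|_{L^2}^2 + \|u_2\|_{L^2}^2})$, since $\int(\Pi_N u)^3 = \int u_1^3$ depends only on the $E_N$ component.

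The proof then proceeds in two stages via Fubini. For the $E_N^\perp$ factor, I would use the invariance of $\mu_N$ under $S(t)$ (the proposition of Subsection 4.2), combined with the conservation of the $L^2$ norm under the free Airy flow: the change of variables $v_2 = S(t) u_2$ leaves $d\mu_N$ unchanged and also preserves the coupling term $\chi$ for every fixed $u_1$. After this reduction one is left to verify that for each fixed $v_2 \in E_N^\perp$ the finite dimensional measure $\chi(\sqrt{\|u_1\|_{L^2}^2 + \|v_2\|_{L^2}^2}) e^{\int u_1^3} d\mu^N(u_1)$ on $E_N$ is invariant under $\Psi_N^{\mathrm{fin}}(t)$.

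Identifying $E_N$ with $\R^{2N}$ via the basis $(c_n,s_n)$ and writing $d\mu^N(u_1) = Z_N^{-1} e^{-\frac12 \|\partial_x u_1\|_{L^2}^2} du_1$ with $du_1$ Lebesgue, this amounts to showing that $\chi(\sqrt{\|u_1\|_{L^2}^2 + \|v_2\|_{L^2}^2}) e^{-H_N(u_1)} du_1$ is $\Psi_N^{\mathrm{fin}}$-invariant, where $H_N(u_1) = \frac12 \|\partial_x u_1\|_{L^2}^2 - \frac16\int u_1^3$ is the Hamiltonian of $\Psi_N^{\mathrm{fin}}$ (up to the paper's coefficient convention). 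Three conservation statements then combine to give invariance: Liouville's theorem for the Hamiltonian vector field on $E_N$ yields preservation of the Lebesgue measure $du_1$; the $L^2$ mass of $u_1$ is preserved (pair the equation with $u_1$), which keeps $\chi$ invariant; and $H_N$ itself is preserved. The step deserving the most care is this last conservation, which follows from a direct integration by parts in which the projector $\Pi_N$ is commuted with the relevant derivatives: it works because $\partial_x^2 u_1 \in E_N$ for $u_1 \in E_N$ (so $\Pi_N$ may be removed from any inner product against $\partial_x^2 u_1$) and because the quartic cross term of the form $\int u_1^2 \Pi_N \partial_x(u_1^2)$ vanishes by self-adjointness of $\Pi_N$ and antisymmetry of $\partial_x$. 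Once these three conservation laws are in place, the change of variables $v_1 = \Psi_N^{\mathrm{fin}}(t) u_1$ has unit Jacobian and leaves both $\chi$ and $e^{-H_N}$ untouched, yielding $\rho_N^t = \rho_N$.
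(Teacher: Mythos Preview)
Your proof is correct and follows essentially the same strategy as the paper: decouple $\Psi_N(t)$ as a finite-dimensional Hamiltonian flow on $E_N$ times the linear Airy flow on $E_N^\perp$, use Liouville's theorem plus conservation of $H_N$ and the $L^2$ norm on the finite-dimensional factor, and invoke the invariance of $\mu_N$ under $S(t)$ on the infinite-dimensional factor. The only cosmetic difference is that the paper inserts the redundant factor $\chi(\|\Pi_N u\|_{L^2})$ so as to write $\rho_N$ as $\chi(\|u\|_{L^2})\,d\nu_N\otimes d\mu_N$ and treats the full cutoff $\chi(\|u\|_{L^2})$ at the very end, whereas you carry the coupling $\chi(\sqrt{\|u_1\|^2+\|u_2\|^2})$ through the Fubini argument and use conservation of $\|u_1\|_{L^2}$ at fixed $v_2$; both handlings are equivalent.
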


\begin{proof} We recall that $\rho_N$ is defined as
$$
d\rho_N (u) = \kappa f_N(u) d\mu(u)
$$
with
$$
f_N(u) = \chi (\|u\|_{L^2}) e^{\frac{1}{6} (\Pi_N u)^3} = \chi (\|u\|_{L^2}) \chi (\|\Pi_N u\|_{L^2}) e^{\frac{1}{6} (\Pi_N u)^3}\; .
$$

We write $\rho_N$ as 
$$
d\rho_N(u) = \chi(\|u\|_{L^2})d\nu_N(\Pi_N u)\otimes d\mu_N((1-\Pi_N)u)
$$
with
$$
d\nu_N(\Pi_N u) = \chi(\|\Pi_N u\|_{L^2}) e^{\frac{1}{6}\int (\Pi_N u)^3} d\mu^N (u)\; .
$$
Recalling the structure of $\mu^N$, we have that
$$
d\nu_N(u = \sum_{n=1}^N a_n c_n + b_n s_n) = \chi( \Pi_N u) e^{\frac{1}{6}(\Pi_N u)^3 - \frac{1}{2}(\partial_x \Pi_N u)^2} \prod_{n=1}^N\frac{da_n db_n n^2}{2\pi}
$$
where
$$
\prod_{n=1}^N\frac{da_n db_n n^2}{2\pi}
$$
is the Lebesgue measure on $E_N$. The solution $\Psi_N(t) u$ can be written
$$
\Psi_N(t) u = \Psi_N(t)(\Pi_N u) + S(t) (1-\Pi_N) u
$$
and besides, the quantity
$$
\frac{1}{6}(\Pi_N u)^3 - \frac{1}{2}(\partial_x \Pi_N u)^2
$$
is an invariant of the equation \eqref{approxkdv} as well as the $L^2$ norm. As the equation \eqref{approxkdv} is Hamiltonian on $E_N$, the Lebesgue measure on $E_N$ is invariant through its flow thanks to Liouville theorem and the density of $\nu_N$ with regard to the Lebesgue measure is invariant as well, hence $\nu_N$ is invariant through $\Psi_N(t)$. Besides, $\mu_N$ is invariant through the flow $S(t)$. Finally, we have that for all $A_1 \in E_N$ and $A_2 \in E_N^\bot$, using the structure of $\Psi_N$ and $\nu_N$,
\begin{eqnarray*}
\nu_N \otimes \mu_N (\Psi_N(t)^{-1} (A_1\times A_2) ) &=& \nu_N \otimes \mu_N\Big(\Psi_N(t)^{-1}(A_1) \times S(t)^{-1}(A_2)\Big)\\
& = & \nu_N(\Psi_N(t)^{-1}A_1) \mu_N(S(t)^{-1}(A_2) 
\end{eqnarray*}
and then the invariance of $\nu_N$ under the flow $\Psi_N$ and of $\mu_N$ under $S(t)$,
\begin{eqnarray*}
\nu_N \otimes \mu_N (\Psi_N(t)^{-1} (A_1\times A_2) ) & = & \nu_N(A_1)\mu_N(A_2) \\
 & = & \nu_N\otimes \mu_N (A_1\times A_2) 
\end{eqnarray*}
where we see $A_1\times A_2$ as an isomorphic form of the set $\{ u \in H^{1/2-} \; : \; \Pi_N u \in A_1\; ,\; (1-\Pi_N)u \in A_2\}$.

Since the equality is true for every Cartesian product of $E_N$ and $E_N^\bot$, then it is true for all measurable set in the $\sigma$-algebra of the Cartesian product $E_0^N$ (finite dimensional) and $E_N$ (with topology $H^s$, $s<1/2$) that is for all measurable sets in $H^{1/2-}$.

Finally, as $\Psi_N(t)$ preserves the $L^2$ norm, $d\rho_N(u) = \chi(u) d\nu_N\otimes d\mu_N (u)$ is invariant through the flow of $\Psi_N(t)$.\end{proof}

\begin{proposition}The measure $\rho$ is invariant under the flow $\Psi(t)$ of KdV. \end{proposition}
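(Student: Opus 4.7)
The strategy is to pass the already established invariance $\rho_N^t = \rho_N$ to the limit as $N\to\infty$, exploiting the $L^1_\mu$ convergence $f_N \to f$ given by Proposition \ref{prop-cgfn} together with the local uniform convergence $\Psi_N(t)\to\Psi(t)$ from Proposition \ref{luc}. Since probability measures on the topological $\sigma$-algebra of $H^{1/2-}$ are determined by their integrals against bounded continuous functions $g$ on $H^s$ (fix any $s\in(0,1/2)$), it suffices to prove, for every such $g$,
$$\int g(\Psi(t)u)\,f(u)\,d\mu(u) = \int g(u)\,f(u)\,d\mu(u).$$
The invariance of $\rho_N$ under $\Psi_N(t)$ reads $\int g(\Psi_N(t)u)\,f_N(u)\,d\mu = \int g(u)\,f_N(u)\,d\mu$, and the right-hand side converges to $\int g\,f\,d\mu$ immediately since $g$ is bounded and $f_N\to f$ in $L^1_\mu$.

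For the left-hand side I would use the splitting
$$g(\Psi_N(t)u)f_N(u) - g(\Psi(t)u)f(u) = g(\Psi_N(t)u)\bigl(f_N(u)-f(u)\bigr) + \bigl(g(\Psi_N(t)u) - g(\Psi(t)u)\bigr)f(u).$$
The first term integrates to something bounded by $\|g\|_\infty\|f_N-f\|_{L^1_\mu}\to 0$. The second term will vanish by dominated convergence (dominated by $2\|g\|_\infty f \in L^1_\mu$), provided one has $\mu$-almost sure convergence $\Psi_N(t)u \to \Psi(t)u$ in $H^s$ for the fixed $t$.

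Establishing this $\mu$-a.s.\ convergence at arbitrary time is the main technical point. For $\mu$-a.e.\ $u$ and any $\sigma\in(s,1/2)$ we have $\|u\|_{H^\sigma} < \infty$, and Proposition \ref{luc} directly gives convergence on $[-T_0,T_0]$ with $T_0$ depending only on $\|u\|_{L^2}$. To reach time $t$ I would iterate by writing, for each integer $k$ with $|k|T_0 \leq |t|$,
$$\Psi(kT_0+\tau)u - \Psi_N(kT_0+\tau)u = \bigl[\Psi(\tau)(\Psi(kT_0)u) - \Psi(\tau)(\Psi_N(kT_0)u)\bigr] + \bigl[\Psi(\tau)(\Psi_N(kT_0)u) - \Psi_N(\tau)(\Psi_N(kT_0)u)\bigr].$$
The first bracket is controlled by the Lipschitz estimates of Section 4 applied to the inductively small quantity $\Psi(kT_0)u - \Psi_N(kT_0)u$, using that both $\|\Psi(kT_0)u\|_{H^\sigma}$ and $\|\Psi_N(kT_0)u\|_{H^\sigma}$ are finite and bounded uniformly in $N$. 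The second bracket is a fresh application of Proposition \ref{luc} with initial datum $\Psi_N(kT_0)u$; the $L^2$ norm of this datum equals $\|u\|_{L^2}$ by conservation under $\Psi_N$, so the same step $T_0$ works at every stage, while its $H^\sigma$ norm is bounded uniformly in $N$ by $e^{cT_0\|u\|_{L^2}^{12}k}\|u\|_{H^\sigma}$ via the global estimates of Section 4.

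The main obstacle is precisely this iteration: the rate produced by Proposition \ref{luc} depends on the $H^\sigma$-bound $R$, which grows exponentially in $k$, and the Lipschitz constant from Section 4 also depends polynomially on these norms. However, for any fixed $t$ only finitely many steps ($\lceil|t|/T_0\rceil$) are needed, so at each step one can choose the rate small enough to absorb these multiplicative factors. This yields $\Psi_N(t)u \to \Psi(t)u$ in $H^s$ for $\mu$-a.e.\ $u$, and consequently the invariance of $\rho$ by the argument above.
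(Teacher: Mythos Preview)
Your argument is correct and takes a genuinely different, more streamlined route than the paper's. Both proofs pass the invariance of $\rho_N$ to the limit, and both iterate the local uniform convergence of Proposition~\ref{luc} to reach arbitrary times; the difference lies in \emph{how} the iteration is controlled and \emph{how} the limit is taken at the measure level.

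The paper works with closed sets directly. To do so it must produce sets of large $\rho$-measure on which $\Psi_N(t)\to\Psi(t)$ \emph{uniformly}; this is the purpose of the sets $A(s_2,R)=\limsup_N\bigcap_n\{\|\Psi_N(T_n)u\|_{H^{s_2}}\le R\sqrt{n+1}\}$, whose full measure (Lemma~\ref{lem-full}) is obtained probabilistically from the invariance of $\rho_N$ and large-deviation estimates, yielding the polynomial growth $\sqrt{n+1}R$. The paper's telescoping at step $n$ applies Proposition~\ref{luc} to the datum $\Psi(T_n)u$, which is why it needs an \emph{a priori} $H^{s_2}$ bound on $\Psi(T_n)u$ coming from membership in $A(s_2,R)$; a further lemma on closures (Lemma~\ref{lem-adh}) is then required to finish the closed-set argument.

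You instead test against bounded continuous $g$ and only need \emph{pointwise} $\mu$-a.s.\ convergence $\Psi_N(t)u\to\Psi(t)u$, which you obtain purely deterministically: your telescoping applies Proposition~\ref{luc} to the datum $\Psi_N(kT_0)u$, whose $H^\sigma$ norm is bounded uniformly in $N$ by the (exponential) global estimates of Section~4, valid for $\Psi_N$ as well. Since $t$ is fixed, the finitely many exponentially large constants are harmless. This bypasses Lemmas~\ref{lem-full} and~\ref{lem-adh} entirely. One small point: fixing a single $s$ only gives $\rho^t=\rho$ on the trace $\sigma$-algebra of $H^s$; since your argument works for every $s<1/2$ and these $\sigma$-algebras generate the Borel $\sigma$-algebra of $H^{1/2-}$, equality on the full $\sigma$-algebra follows. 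What the paper's approach buys in exchange for its extra work is uniform convergence of $\Psi_N(t)$ on explicit large sets together with the $\sqrt{n}$ growth control, which is qualitatively sharper information than your pointwise statement, even if not needed for invariance itself.
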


\begin{proof} We recall from Lemma \ref{luc} that there exists a time $T>0$ such that for all $0\leq s_1<s_2<1/2$, all $R\geq 0$ and all $\varepsilon >0$, there exists $N_0$ such that for all $u\in B(s_2,R)\cap B(0,1)$ where $B(s,R)$ is the ball in $H^s$ of center $0$ and radius $R$, all $t\in[-T,T]$ and all $N\geq N_0$,
$$
\|\Psi(t)u-\Psi_N(t) u\|_{H^{s_1}} \leq \varepsilon\; .
$$
For $R\geq 0$ we call $T_n = nT $, $R_n = \sqrt n R$ and 
$$
A_n^N(s,R) = \lbrace \|\Psi_N(T_n)u\|_{H^s} \leq R_{n+1}\rbrace\; ,\; A^N(s,R) = \bigcap_n A_n^N(s,R) 
$$
$$
A(s,R) = \limsup_N A^N(s,R) \mbox{ and }A = \bigcup_{s,R}(A(s,R))\; .
$$
\begin{lemma}\label{lem-full} The set $A$ is of full $\rho$ measure.\end{lemma}

\begin{proof} We denote the complementary sets of any $A$ depending on various arguments by the letter $E$, for instance the complementary set of $A_n^N(s,R)$ in $H^{1/2-}$ is $E_n^N(s,R)$. First, 
$$
\rho_N(E_n^N(s,R))) = \rho_N (\|\Psi_N(T_n)\|_{H^s}> R_{n+1})
$$
and as $\rho_N$ is invariant through $\Psi_N(t)$, we have that
$$
\rho_N (E_n^N(s,R))  = \rho_N (\lbrace \|u\|_{H^s}> R_{n+1} \rbrace ) \leq C_se^{-c_s (n+1)R^2} \; .
$$
As $E^N(s,R) = \bigcup E_n^N(s,R)$, 
$$
\rho_N (E^N(s,R)) \leq \sum_{n\geq 0} \rho_N (E_n^N(s,R)) \leq C_s e^{-c_s R^2}
$$
therefore as for all measurable sets $\rho_N(A) \leq \rho(A) + d\|f-f_N\|_{L^1_\mu}$, we have that
$$
\rho(E^N(s,R)) \leq C_se^{-c_s R^2 } + d\|f-f_N\|_{L^1_\mu} \; .
$$
As $f_N$ converges towards $f$ in $L^1_\mu$ and thanks to Fatou's lemma, 
$$
\rho(E(s,R)) = \rho(\liminf_N E^N(s,R)) \leq \liminf_N \rho (E^N(s,R)) \leq C_se^{-c_s R^2}\; .
$$
Taking the intersection over $R$ gives
$$
\rho\left(\bigcap_R E(s,R)\right) = 0
$$
and the intersection over $s$ yields $\rho(E) = 0$. Therefore, $\rho(A) =1$.
\end{proof}

Let us fix $s_1<s_2$ and $R$ and prove by induction over $n$ that for all $t\in [-T_n,T_n]$, $\Psi_N(t) u$ converges towards $\Psi(t)u$ in $H^{s_1}$ uniformly for all $u\in A(s_2,R)$.

Initialization : $n=0$. As $T_0 = 0$ by definition, $\Psi_N(T_0)u = u=\Psi(T_0)u$.

$n\Rightarrow n+1$ : we assume that for all $t\in [-T_n,T_n]$, $\Psi_N(t)u$ converges towards $\Psi(t)u$ uniformly for $u$ in $A(s_2,R)$. Thanks to the fact that $u$ belongs to $A(s_2,R)$ we know that there is a subsequence $\Psi_{N_k}(T_n)u$ that satisfies 
$$
\|\Psi_{N_k}(T_n)u\|_{H^{s_2}}\leq R_{n+1}\; .
$$
Considering moreover the convergence of $\Psi_N(T_n) u$ in $H^{s_1}$, we get by duality that the $H^{s_2}$ norm of $\Psi(T_n)u$ is bounded by $R_{n+1}$. Then for all $t\in [0,T]$,
\begin{eqnarray}\label{inegtrig}
\|\Psi(T_n+t)u - \Psi_{N}(T_n+ t)u\|_{H^{s_1}} & \leq &\|\Psi(t) (\Psi(T_n) u) - \Psi_N(t)(\Psi(T_n) u)\|_{H^{s_1}} \\
&  & + \|\Psi_N(t) (\Psi(T_n) u) - \Psi_N(t) (\Psi_N(T_n)(u))\|_{H^{s_1}}\; .
\end{eqnarray}
As the $H^{s_2}$ norm of $\Psi(T_n) u$ is uniformly bounded in $u\in A(s_2,R)$, and that $\Psi_N(t)$ converges (as $t\leq T$) in $H^{s_1}$ towards $\Psi(t)$ uniformly on any bounded set of $H^{s_2}$, we have that 
$$
\|\Psi(t) (\Psi(T_n) u) - \Psi_N(t)(\Psi(T_n) u)\|_{H^{s_1}} 
$$
converges uniformly in $u\in A(s_2,R)$ towards $0$. 

Then, since $\Psi_N(T_n) u$ can be written as
$$
\Psi_N(T_n) (\Pi_N u) + S(t) ((1-\Pi_N)u)
$$
and $S(t)$ preserves the $H^s$ norms and that $\Psi_N(T_n)$ is continuous, as $\Pi_N A(s_2,R)$ is included in a compact set of $E_N$, we get that $\Psi_N(T_n) u$ is bounded uniformly in $u \in A(s_2,R)$ but not necessarily uniformly in $N$. The fact that it is bounded in $H^{s_1}$ uniformly in $N$ comes for the convergence of the sequence $\Psi_N(T_n)u$ towards $\Psi(T_n) u$ which is bounded by $R_{n+1}$ uniformly in $u$. Hence, there exists $R'$ such that for all $u\in A(s_2,R)$ and $N$, the $H^{s_1}$ norms of $\Psi(T_n)u$ and $\Psi_N(T_n) u$ are bounded by $R'$. As $\Psi_N(t)$ is Lipschitz on any bounded set with a constant independent from $N$, we get that
$$
\|\Psi_N(t) (\Psi(T_n) u) - \Psi_N(t) (\Psi_N(T_n)(u))\|_{H^{s_1}}
$$
converges towards $0$ uniformly in $u\in A(s_2,R)$. Indeed, we combine the uniform convergence of $\Psi_N(t) \Psi(T_n) u$ towards $\Psi(t) \Psi (T_n)u$ (for local reasons) and the uniform convergence of 
$$
\Psi_N(t)\Psi_N(T_n) u
$$ 
towards $\Psi_N(t) \Psi (T_n) u$ (using the induction hypothesis) in \eqref{inegtrig}. We get the uniform convergence of $\Psi_N(T_n+t)u $ towards $\Psi(T_n+t) u$ for all $t\in [-T,T]$ and by induction hypothesis, that $\Psi_N(t)u$ uniformly converges towards $\Psi(t) u$ for all $t\in [-T_n,T_{n+1}]$.

By using the same argument replacing $T_n$ by $-T_n$ and $t$ by $-t$, we get that for all $t$ in \\
$[-T_{n+1},T_{n+1}]$, the sequence $\Psi_N(t)u$ converges uniformly in $u \in A(s_2,R)$ in $H^{s_1}$.

We prove now the invariance of the measure. Let $t\in \R$ and $F$ be closed with regard to the topology $H^{s_1}$ and that its intersection with $H^{1/2-}$ is a subset of $A(s_2,R)$, then as $\Psi_N(t) $ converges uniformly in $A(s_2,R)$ towards $\Psi(t)$, we have that for all $\varepsilon >0$ there exists $N_0$ such that $\Psi(t)^{-1} (F)$ is almost surely included in $\Psi_N(t)^{-1}(F+B_\varepsilon)$, for all $N\geq N_0$ hence
$$
\rho^t(F) \leq \rho (\Psi_N(t)^{-1} (F+B_\varepsilon))\;.
$$
Then, comparing $\rho$ and $\rho_N$, we have
$$
\rho (\Psi_N(t)^{-1} (F+B_\varepsilon)) \leq \rho_N (\Psi_N(t)^{-1}(F+B_\varepsilon) ) + \kappa\|f-f_N\|_{L^1_\mu}\; .
$$
Using the invariance of $\rho_N$ under $\Psi_N$, we have that
$$
\rho^t(F) \leq \rho_N(F+B_\varepsilon) + \kappa\|f-f_N\|_{L^1_\mu} \leq \rho(F+B_\varepsilon) +2 \kappa\|f-f_N\|_{L^1_\mu} \; .
$$
We let $N$ go to $\infty$ such that
$$
\rho^t(F) \leq \rho (F+B_\varepsilon)
$$
and by applying the dominated convergence theorem,
$$
\rho^t (F) \leq \rho(F) \; .
$$
The reverse inequality comes from the continuity and the reversibility of the flow.

To get this inequality for all closed subset of $H^s$, we need to prove the following lemma.
\begin{lemma}\label{lem-adh}Let $s_1 < s_3 < s_2$. The closure of $A(s_2,R)$ in $H^{s_1}$ is included in $\widetilde A(s_3,2R,n)$ for all $n$ with
$$
\widetilde A(s_3,2R,n) =  \limsup_N \bigcap_{k=0}^n A_k^N(s_3,2R)\; .
$$
\end{lemma}

\begin{remark} The previous proof allows us to say that $\rho^t(F)$ is equal to $\rho(F)$ for all time $t\in [-T_n,T_n]$ as long as $F$ is a closed subset of $H^{s_1}$ whose intersection with $H^{1/2-}$ is included in $A(s_2,R,n)$.\end{remark}

\begin{proof} of Lemma \ref{lem-adh} Let $u_j$ be a sequence of $A(s_2,R)$ that converges towards $u$ in $H^{s_1}$. As $u_j$ is in $A(s_2,R)$, for all $k$, $\Psi (T_k)u_j$ is bounded by $R_{k+1}$. But since $\Psi_N(T_k) $ converges uniformly in $A(s_2,R)$ in any $H^s$ with $s< s_2$, there exists $N_0(k)$ such that for all $N\geq N_0(k)$, $\|\Psi_{N}(T_k)(u_j)\|_{H^{s_3}}$ is bounded by $2R_{k+1}$. Since $u_j$ converges towards $u$ and $\Psi_N(T_k)$ is continuous, $\Psi_N(T_k)u_j$ converges towards $\Psi_N(T_k)u$ in $H^{s_1}$, but thanks to the uniform bound in $j$ of $\|\Psi_N(T_k)u_j\|_{H^{s_2}}$, $\|\Psi_N(T_k)u_j\|_{H^{s_2}}$ is bounded by $2R_{k+1}$ as long as $N\geq N_0(k)$. By considering $N_0 = \max N_0(k)$ for $k=0,\hdots ,n$, we get that $u$ belongs to 
$$
\bigcap_{N\geq N_0} \bigcap_{k=0}^n A_k^N(s_3,R) \subseteq \bigcup_{M_0} \bigcap_{N\geq M_0} \bigcap_{k=0}^n A_k^N(s_3,R) = \liminf \bigcap_{k=0}^n A_k^N(s,R)
$$
which is included in the $\limsup$, that is $\widetilde A(s_3,2R,n)$.\end{proof}

We now fix $t$ and $F$ a closed set of $H^{s_1}$. As the sequence $T_n$ goes to $\infty$, $t$ belongs to $[-T_n,T_n]$ for some $n$. For $R\geq 0$, we have that, with $\overline{A(s_2,R)}^c$ the complementary set of the adherence of $A(s_2,R)$ in $H^{s_1}$
$$
\rho^t (\overline{A(s_2,R)}^c) = 1 - \rho^t(\overline{A(s_2,R)})
$$
and since $\overline{A(s_2,R})$ is a closed set in $H^{s_1}$ included in $\widetilde A(s_3,2R, n)$,
$$
\rho^t(\overline{A(s_2,R)}) = \rho(\overline{A(s_2,R)}) \geq \rho (A(s_2,R))
$$
hence
$$
\rho^t (\overline{A(s_2,R)}^c) \leq C_{s_2}e^{-c_{s_2} R^2}\; .
$$
Therefore, for all $R$ ,
\begin{eqnarray*}
|\rho^t(F) - \rho(F)| &\leq & |\rho^t(F) - \rho^t(F\cap \overline{A(s_2,R)})| +  \\
 & & |\rho^t(F\cap \overline{A(s_2,R)}-\rho(F\cap \overline{A(s_2,R)}|+ |\rho(F) - \rho(F\cap \overline{A(s_2,R)})|
\end{eqnarray*}
as $F\cap \overline{A(s_2,R)}$ is a closed set of $H^{s_1}$ included in $\widetilde A(s_3,2R,n)$,  $\rho^t(F\cap \overline{A(s_2,R)}-\rho(F\cap \overline{A(s_2,R)}$ is equal to $0$. Then,
\begin{eqnarray*}
|\rho^t(F) - \rho(F)| & \leq & \rho^t (\overline{A(s_2,R)}^c)  + \rho (\overline{A(s_2,R)}^c)  \\
 & \leq & C_{s_2} e^{-c_{s_2}R^2}
\end{eqnarray*}
We let $R$ go to $\infty$ and we get that for all closed set of $H^s$,
$$
\rho^t(F) = \rho(F)\; .
$$

Since this equality is preserved by taking the complementary sets and by countable disjoint unions and that it is true for all $s_1$, we get that for all measurable sets $M$ of $H^{1/2-}$, $\rho^t(M) = \rho(M)$.\end{proof}

\begin{proposition} The invariant measure $\rho$ belongs to $M_{s,p}$ for all $s<1/2$ and $p<\infty$. \end{proposition}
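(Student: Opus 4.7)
The plan is to unfold the definition of $M_{s,p}$ and check the two required conditions separately. The first, that $\|u\|_{L^2}\in L^\infty_\rho$, is immediate: by construction
$$
d\rho(u)=\kappa\,\chi(\|u\|_{L^2})\,e^{\int u^3}\,d\mu(u),
$$
so $\rho$ is supported in $\{\|u\|_{L^2}\leq 1\}$ and $\|u\|_{L^2}\leq 1$ $\rho$-almost surely, giving $\|u\|_{L^2}\in L^\infty_\rho$ with norm at most $1$. This step is trivial.

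The core of the proof is therefore to show that $\|u\|_{H^s}\in L^p_\rho$ for every $s<1/2$ and $p<\infty$, i.e.\ that
$$
\int \|u\|_{H^s}^p\,d\rho(u)=\kappa\int \|u\|_{H^s}^p\,f(u)\,d\mu(u)<\infty.
$$
I would bound $f$ pointwise on its support using the elementary inequality $|\int u^3|\leq \|u\|_{L^2}^2\|u\|_{L^\infty}\leq \|u\|_{L^\infty}$ (already used in the proof of Proposition \ref{prop-cgfn}), so that $f(u)\leq e^{\|u\|_{L^\infty}}$. Then apply Cauchy--Schwarz with respect to $d\mu$:
$$
\int \|u\|_{H^s}^p f(u)\,d\mu \leq \left(\int \|u\|_{H^s}^{2p}\,d\mu\right)^{1/2}\left(\int e^{2\|u\|_{L^\infty}}\,d\mu\right)^{1/2}.
$$

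Both factors are finite thanks to the large deviation estimates already established in this section. For the first factor, Remark \ref{HSldge} gives $\mu(\|u\|_{H^s}\geq R)\leq C_s e^{-c_s R^2}$, so $\|u\|_{H^s}$ has finite moments of every order under $\mu$. For the second factor, the proposition preceding Remark \ref{HSldge} gives $\mu(\|u\|_{L^\infty}\geq R)\leq C e^{-cR^2}$, and writing
$$
\int e^{2\|u\|_{L^\infty}}d\mu = \int_0^\infty 2e^{2R}\mu(\|u\|_{L^\infty}\geq R)\,dR \leq 2C\int_0^\infty e^{2R-cR^2}\,dR<\infty,
$$
the sub-Gaussian tail beats the exponential, yielding convergence. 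This is the only place where there is any substantive content, and it relies entirely on results already proved; there is no real obstacle, just bookkeeping. Combining the two bounds gives $\|u\|_{H^s}\in L^p_\rho$, and together with the $L^\infty_\rho$ bound on $\|u\|_{L^2}$ we conclude $\rho\in M_{s,p}$.
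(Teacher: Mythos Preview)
Your proof is correct and follows essentially the same approach as the paper: both arguments note that $\rho$ is supported in $\{\|u\|_{L^2}\leq 1\}$, bound $f(u)\leq e^{\|u\|_{L^\infty}}$, and then invoke the sub-Gaussian tails of $\|u\|_{H^s}$ and $\|u\|_{L^\infty}$ under $\mu$ to get integrability of the product via H\"older/Cauchy--Schwarz. The only difference is that you spell out the Cauchy--Schwarz step and the layer-cake computation explicitly, whereas the paper leaves these implicit.
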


\begin{proof}First, since $\rho$ is defined by
$$
d\rho (u) = d\chi(\|u\|_{L^2}) e^{\frac{1}{6} \int u^3}d\mu(u) \; ,
$$
we have that $\|u\|_{L^2(\T)}$ is less than $1$ $\rho$-almost surely, thus $u$ belongs to $L^\infty_\rho,L^2(\T)$.

Then, we need to prove that $u$ belongs to $L^p_\rho, H^s(\T)$. This is equivalent to the fact that $\|u\|_{H^s}^p$ is $\rho$ integrable, that is, that $\|u\|_{H^s}^p \chi(\|u\|_{L^2}) e^{\frac{1}{6}\int u^3}$ is $\mu$ integrable. But we know that $\|u\|_{H^s}$ satisfies $\mu$-large Gaussian deviation estimates 
$$
\mu(\|u\|_{H^s} \geq R ) \leq C_s e^{-c_s R^2}
$$
which ensures that $\|u\|_{H^s}^q$ is $\mu$-integrable for all $q$, and we recall
$$
\chi(\|u\|_{L^2}) e^{\frac{1}{6}\int u^3} \leq e^{\|u\|_{L^\infty}}
$$
together with the large Gaussian deviation estimates of $\|u\|_{L^\infty}$, that is
$$
\mu( \|u\|_{L^\infty} \geq R) \leq Ce^{-c R^2} \; ,
$$
which ensures that $\left( \chi(\|u\|_{L^2})e^{\frac{1}{6}\int u^3} \right)^q$ is $\mu$-integrable for all $q$. Finally, we get that
$$
\|u\|_{H^s}^p \chi(\|u\|_{L^2}) e^{\frac{1}{6}\int u^3}
$$
is integrable with regard to $\mu$. 

Therefore, $\rho$ belongs to $M_{s,p}$. \end{proof}

\begin{proposition} The measure $\rho$ is stable in $M_{s,p}$ for the flow of KdV, in the sense that for all times $t$, there exist two constants $C,c$ such that for all $\nu\in M_{s,p}$,
$$
\|\nu^t - \nu\|_{s,p} \leq Ce^{c|t|(1+\|x\|_{L^\infty_\nu,L^2})^{12}}(1+\|x\|_{L^p_\nu,H^s})\|\nu - \rho\|_{s,p} \; .
$$
\end{proposition}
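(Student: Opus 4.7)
The plan is to reduce the stability estimate to the main continuity theorem (Theorem \ref{th1}) via a triangle inequality exploiting the invariance of $\rho$ just established. Concretely, for any $\nu \in M_{s,p}$ I would write
$$
\|\nu^t - \nu\|_{s,p} \leq \|\nu^t - \rho^t\|_{s,p} + \|\rho^t - \rho\|_{s,p} + \|\rho - \nu\|_{s,p}
$$
and observe that the middle term vanishes since $\rho^t = \rho$ by the invariance proposition above. The third term has exactly the shape of the target right-hand side, so the core of the argument is to bound the first term.

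Next I would apply Theorem \ref{th1} with the roles of $\mu$ and $\nu$ taken by $\nu$ and $\rho$, which yields
$$
\|\nu^t - \rho^t\|_{s,p} \leq C\bigl(1 + \|x\|_{L^p_\nu,H^s} + \|x\|_{L^p_\rho,H^s}\bigr)\exp\!\Bigl(c|t|\bigl(\|x\|_{L^\infty_\nu,L^2} + \|x\|_{L^\infty_\rho,L^2}\bigr)^{12}\Bigr)\|\nu - \rho\|_{s,p}.
$$
It then suffices to control the two $\rho$-moments by absolute constants. The bound $\|x\|_{L^\infty_\rho, L^2} \leq 1$ is immediate from the very definition of $\rho$ through the indicator $\chi(\|u\|_{L^2})$, and the preceding proposition has shown that $\rho \in M_{s,p}$, in particular $\|x\|_{L^p_\rho, H^s} \leq K_{s,p}$ for some finite constant depending only on $s$ and $p$ (which can be absorbed into the constants $C$ and $c$).

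Putting these together gives
$$
\|\nu^t - \rho^t\|_{s,p} \leq C'\bigl(1 + \|x\|_{L^p_\nu,H^s}\bigr)\exp\!\bigl(c'|t|(1 + \|x\|_{L^\infty_\nu,L^2})^{12}\bigr)\|\nu - \rho\|_{s,p},
$$
and since the prefactor is bounded below by $1$, the leftover third term $\|\rho - \nu\|_{s,p}$ is absorbed into it, giving the announced estimate. I do not expect a serious obstacle here: both nontrivial inputs (the continuity theorem and the invariance plus integrability of $\rho$) have been established earlier, so the argument is essentially a bookkeeping of constants. The only point requiring a little care is making sure the symmetry of the bound in Theorem \ref{th1} is applied in the right direction (so that the moments appearing on the right are those of $\nu$ after using $\rho$'s moments as absolute constants), which is handled cleanly by the triangle inequality above.
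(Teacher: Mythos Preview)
Your proposal is correct and follows essentially the same route as the paper: triangle inequality via $\rho^t=\rho$, then the continuity estimate of Theorem~\ref{th1} applied to the pair $(\nu,\rho)$, with the $\rho$-moments absorbed into the constants. The paper's proof is slightly terser (it writes the triangle inequality with two terms, using $\rho^t=\rho$ directly), but the argument is the same.
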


\begin{proof} We use the invariance of $\rho$ to write
$$
\|\nu^t - \nu\|_{s,p} \leq \|\nu^t - \rho^t\|_{s,p} + \|\rho - \nu\|_{s,p}
$$
and the continuity of the flow of KdV for the Wasserstein metrics to get
$$
 \|\nu^t - \rho^t\|_{s,p} \leq Ce^{c|t|(1+\|x\|_{L^\infty_\nu,L^2})^{12}}(1+\|x\|_{L^p_\nu,H^s}) \|\nu - \rho\| \; .
$$\end{proof}

\bibliographystyle{amsplain}
\bibliography{bibannso} 
\nocite{*}

\end{document}